\newtheorem{theorem}{Theorem}[section]
\newtheorem{proposition}{Proposition}[section]
\newtheorem{lemma}{Lemma}[section]
\newtheorem{example}{Example}[section]
\newtheorem{corollary}{Corollary}[section]
\newtheorem{remark}{Remark}[section]
\newcommand{\R}{\mathbb{R}}
\newcommand{\h}{\mathbb{H}}
\newcommand{\s}{\mathbb{S}}
\newcommand{\ria}{\rightarrow}
\newcommand{\n}{\nabla}
\newcommand{\ran}{\rangle}
\newcommand{\lan}{\langle}
\DeclareMathOperator{\hess}{Hess}
\DeclareMathOperator{\ric}{Ric}
\DeclareMathOperator{\scal}{scal}
\DeclareMathOperator{\di}{div}
\DeclareMathOperator{\diam}{diam}
\DeclareMathOperator{\lenght}{Lenght}
\DeclareMathOperator{\area}{Area}
\numberwithin{equation}{section}
\title[Isoperimetric inequalities for submanifolds in warped products manifolds]{Isoperimetric inequalities and monotonicity formulas for submanifolds in warped products manifolds}
\author{Hil\'ario Alencar \and Greg\'orio Silva Neto}
\date{April 11, 2017}
\address{Instituto de Matem\'atica\\
Universidade Federal de Alagoas\\
Macei\'o, AL, 57072-900, Brazil\\}
\email{hilario@mat.ufal.br}
\address{Instituto de Matem\'atica\\
Universidade Federal de Alagoas\\ 
Macei\'o, AL, 57072-900, Brazil\\}
\email{gregorio@im.ufal.br}
\begin{document}
\footnotetext{Hil\'ario Alencar was partially supported by CNPq of Brazil}

\subjclass[2010]{53C21, 53C42}

\begin{abstract}
In this paper we first prove some linear isoperimetric inequalities for submanifolds in the de Sitter-Schwarzschild and Reissner-Nordstrom manifolds. Moreover, the equality is attained. Next, we prove some monotonicity formulas for submanifolds with bounded mean curvature vector in warped product manifolds and, as consequences, we give lower bound estimates for the volume of these submanifolds in terms of the warping function. We conclude the paper with an isoperimetric inequality for minimal surfaces.
\end{abstract}

%
%

\maketitle

\section{Introduction}

Let $I\subset\R$ be an open interval and $N^{n-1}$ be a Riemannian manifold. We define the $n-$dimensional warped product manifold by $M^n=I\times N^{n-1}, \ n\geq 2,$ endowed with the warped metric
\begin{equation}\label{warped}
g = dr^2 + h(r)^2g_N,
\end{equation}
where $h:I\ria\R$ is a smooth and positive function, called the warping function, and $g_N$ is the metric of $N^{n-1}.$ 

These manifolds were first introduced by R. Bishop and B. O'Neill in 1969, see \cite{B-ON}, and has had increasing importance due its applications as model spaces in general relativity. There are many interesting papers in this subject, see for example \cite{Montiel}, \cite{Morgan}, \cite{Brendle-Eichmair}, \cite{Brendle}, \cite{Bessa}, \cite{Xia-Wu-1}, \cite{Xia-Wu-2}, \cite{Gimeno}, and \cite{Brendle-2} for more references and results.

In the following we introduce some examples of warped product manifolds used in this paper.

\begin{example}[The space forms $\R^n, \ \h^{n}(c), \ c<0,$ and $\s^{n}(c),\ c>0, n\geq 2$]\label{space-forms}
{\normalfont We can consider the space forms as warped product manifolds endowed with the warped metric $g=dr^2+h(r)^2g_{\s^{n-1}},$ where $g_{\s^{n-1}}$ denotes the standard metric of unit $(n-1)-$dimensional sphere $\s^{n-1}.$
\begin{itemize}
\item[(i)] For $\R^n,$ the warping function is $h(r)=r,$ $r\in(0,\infty);$
\item[(ii)] For $\h^n(c),$ the warping function is $h(r)=\frac{1}{\sqrt{-c}}\sinh(\sqrt{-c}r),$ $r\in(0,\infty);$
\item[(iii)] For $\s^n(c),$ the warping function is $h(r)=\frac{1}{\sqrt{c}}\sin(\sqrt{c}r),$ $r\in(0,\pi).$
\end{itemize}

}
\end{example}

\begin{example}[The de Sitter-Schwarzschild manifolds]{\normalfont Let $n\geq 3,$ $m>0$ and $c\in\R.$ Let \[(s_0,s_1)=\{s>0 ; 1-ms^{2-n}-cs^2>0\}.\] If $c\leq 0,$ then $s_1=\infty.$ If $c>0,$ assume that $\frac{n^n}{4(n-2)^{n-2}}m^2c^{n-2}<1.$ The de Sitter-Schwarzschild manifold is defined by $M^n(c)=(s_0,s_1)\times \s^{n-1}$ endowed with the metric
\[
g=\dfrac{1}{1-ms^{2-n}-cs^2}ds^2 + s^2 g_{\s^{n-1}}.
\]
In order to write the metric $g$ in the form (\ref{warped}), define $F:[s_0,s_1)\ria \R$ by
\[
F'(s)=\dfrac{1}{\sqrt{1-ms^{2-n}-cs^2}}, \ F(s_0)=0.
\]
Taking $r=F(s),$ we can write $g=dr^2+h(r)^2g_{\s^{n-1}},$ where $h:[0,F(s_1))\ria[s_0,s_1)$ denotes the inverse function of $F.$ The function $h(r)$ clearly satisfies
\begin{equation}\label{defi-SS}
h'(r)=\sqrt{1-mh(r)^{2-n}-ch(r)^2},\ h(0)=s_0,\ \mbox{and}\ h'(0)=0.
\end{equation}
}
\end{example}

\begin{example}[The Reissner-Nordstrom manifold]\label{ex-RN}{\normalfont The Reissner-Nordstrom manifold is defined by $M^n=(s_0,\infty)\times\s^{n-1},\ n\geq 3,$ with the metric
\[
g=\dfrac{1}{1-ms^{2-n}+q^2s^{4-2n}}ds^2 + s^2 g_{\s^{n-1}},
\]
where $m>2q>0$ and $s_0=\left(\frac{2q^2}{m-\sqrt{m^2-4q^2}}\right)^{\frac{1}{n-2}}$ is the larger of the two solutions of $1-ms^{2-n}+q^2s^{4-2n}=0.$ In order to write the metric $g$ in the form (\ref{warped}), define $F:[s_0,\infty)\ria \R$ by
\[
F'(s)=\dfrac{1}{\sqrt{1-ms^{2-n}+q^2s^{4-2n}}}, \ F(s_0)=0.
\]
Taking $r=F(s),$ we can write $g=dr^2+h(r)^2g_{\s^{n-1}},$ where $h:[0,\infty)\ria[s_0,\infty)$ denotes the inverse function of $F.$ The function $h(r)$ clearly satisfies
\begin{equation}\label{defi-RN}
h'(r)=\sqrt{1-mh(r)^{2-n}+q^2h(r)^{4-2n}},\ h(0)=s_0,\ \mbox{and}\ h'(0)=0.
\end{equation}
}
\end{example}

In this paper we first prove some linear isoperimetric inequalities for submanifolds in the de Sitter-Schwarzschild and Reissner-Nordstrom manifolds. In particular, we obtain some known isoperimetric inequalities for submanifolds in space forms. Next, we prove some monotonicity formulas for submanifolds with bounded mean curvature vector in warped product manifolds and, as consequences, we give lower bound estimates for the volume of these submanifolds in terms of the warping function. We conclude the paper with an isoperimetric inequality for minimal surfaces.  

The first result is an isoperimetric inequality for submanifolds in the de Sitter-Schwarzschild manifold.

\begin{theorem}\label{iso-SS1} Let $\Sigma$ be a $k-$dimensional, compact, oriented, submanifold, $k\geq 2$, of the de Sitter-Schwarzschild manifold $M^n(c)=(s_0, s_1)\times \s^{n-1}, \ n\geq3.$
\begin{itemize}
\item[(i)] If $\Sigma\subset \left(s_0,\left(\frac{mn}{2}\right)^{\frac{1}{n-2}}\right)\times\s^{n-1},$ then 
\begin{equation}\label{iso-SS1-d}
\begin{split}
|\Sigma|&\leq \dfrac{d_\Sigma}{k\sqrt{1-md_\Sigma^{2-n}-cd_\Sigma^2}}\left[|\partial\Sigma| + k\int_\Sigma |\vec{H}| d\Sigma\right]\\
&\qquad -\frac{d_\Sigma^2}{k(n-1)(1-md_\Sigma^{2-n}-cd_\Sigma^2)}\int_\Sigma\ric_{M^n(c)}(\n r)|\n_\Sigma r|^2d\Sigma,
\end{split}
\end{equation}
where $d_\Sigma=\min\left\{s\in (s_0,s_1);\Sigma\cap \{\{s\}\times \s^{n-1}\}\neq\emptyset\right\}.$ In particular,
\begin{equation}\label{iso-SS1-mod-3}
|\Sigma|\leq C_1(d_\Sigma)\left[|\partial\Sigma| + k\int_\Sigma |\vec{H}| d\Sigma\right],
\end{equation}
where
\[
C_1(d_\Sigma) = \dfrac{d_\Sigma\sqrt{1-md_\Sigma^{2-n}-cd_\Sigma^2}}{(1-\frac{mn}{2}d_\Sigma^{2-n})+(k-1)(1-md_\Sigma^{2-n}-cd_\Sigma^2)}.
\]

\item[(ii)] If $c>0$ and $\Sigma\subset\left(\left(\frac{m(n-2)}{2c}\right)^{\frac{1}{n}},s_1\right)\times\s^{n-1},$ then
\begin{equation}\label{iso-SS1-R0}
\begin{split}
|\Sigma|&\leq \dfrac{R_\Sigma}{k\sqrt{1-mR_\Sigma^{2-n}-cR_\Sigma^2}}\left[|\partial\Sigma| + k\int_\Sigma |\vec{H}| d\Sigma\right]\\
&\qquad -\frac{d_\Sigma^2}{k(n-1)(1-md_\Sigma^{2-n}-cd_\Sigma^2)}\int_\Sigma\ric_{M^n(c)}(\n r)|\n_\Sigma r|^2d\Sigma,
\end{split}
\end{equation}
where $R_\Sigma=\max\left\{s\in  \left(s_0,s_1\right);\Sigma\cap \{\{s\}\times \s^{n-1}\}\neq\emptyset\right\}.$

\item[(iii)] If $c\leq 0$ and $\Sigma\in \left(\left(\frac{mn}{2}\right)^{\frac{1}{n-2}},\infty\right)\times\s^{n-1},$ or $c>0$ and \\$\Sigma\subset\left(\left(\frac{mn}{2}\right)^{\frac{1}{n-2}},\left(\frac{m(n-2)}{2c}\right)^{\frac{1}{n}}\right)\times\s^{n-1},$ then
\begin{equation}\label{iso-SS1-R}
\begin{split}
|\Sigma|&\leq \dfrac{R_\Sigma}{k\sqrt{1-mR_\Sigma^{2-n}-cR_\Sigma^2}}\left[|\partial\Sigma| + k\int_\Sigma |\vec{H}| d\Sigma\right]\\
&\qquad -\frac{R_\Sigma^2}{k(n-1)(1-mR_\Sigma^{2-n}-cR_\Sigma^2)}\int_\Sigma\ric_{M^n(c)}(\n r)|\n_\Sigma r|^2d\Sigma.
\end{split}
\end{equation}
\item[(iv)] For $c\in \R$ and $\Sigma\subset\left(\left(\frac{mn}{2}\right)^{\frac{1}{n-2}},s_1\right)\times\s^{n-1},$ we have also
\begin{equation}\label{iso-SS1-mod}
|\Sigma|\leq \dfrac{R_\Sigma}{(k-1)\sqrt{1-mR_\Sigma^{2-n}-cR_\Sigma^2}}\left[|\partial\Sigma| + k\int_\Sigma |\vec{H}| d\Sigma\right].
\end{equation}
In particular, if $c<0,$ then
\begin{equation}\label{iso-SS1-mod-2}
|\Sigma|\leq \dfrac{1}{\sqrt{-c}(k-1)}\left[|\partial\Sigma| + k\int_\Sigma |\vec{H}| d\Sigma\right].
\end{equation}
\end{itemize}
Moreover, if $\Sigma$ is a slice $\{s\}\times\s^{n-1},$ then the equality holds for the inequalities (\ref{iso-SS1-d}), (\ref{iso-SS1-R0}), and (\ref{iso-SS1-R}). Here, $\ric_{M^n(c)}$ denotes the Ricci curvature of $M^n(c),$ $\n r$ denotes the gradient of the distance function $r,$ $\n_\Sigma r$ denotes the component of $\n r$ tangent to $\Sigma,$ and $\vec{H}$ denotes the normalized mean curvature vector of $\Sigma.$ 
\end{theorem}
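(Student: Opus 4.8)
The plan is to derive all of (\ref{iso-SS1-d})--(\ref{iso-SS1-mod-2}) from a single identity obtained by applying the tangential divergence theorem on $\Sigma$ to the radial vector field
\[
X = \frac{h(r)}{h'(r)}\,\n r,
\]
where $h$ is the warping function of $M^n(c)$ from (\ref{defi-SS}). Since $\Sigma$ is compact, the $s$-coordinate (equivalently $h(r)$) takes values in a compact subinterval $[d_\Sigma,R_\Sigma]\subset(s_0,s_1)$ along $\Sigma$, so $h'(r)>0$ there and $X$ is smooth near $\Sigma$. I would begin by recording the standard warped-product identities $\hess r=\frac{h'(r)}{h(r)}(g-dr\otimes dr)$ and $\ric_{M^n(c)}(\n r)=-(n-1)\frac{h''(r)}{h(r)}$, together with the two immediate consequences of (\ref{defi-SS}): $h(r)h''(r)=\frac{(n-2)m}{2}h(r)^{2-n}-c\,h(r)^2$ and $(h')^2-hh''=1-\frac{mn}{2}h^{2-n}$.

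For a general radial field $\phi(r)\n r$ and an orthonormal frame $\{e_i\}$ of $T\Sigma$, the Hessian identity gives $\di_\Sigma(\phi(r)\n r)=\phi'(r)|\n_\Sigma r|^2+\phi(r)\frac{h'(r)}{h(r)}(k-|\n_\Sigma r|^2)$. The point of the choice $\phi=h/h'$ is that the second coefficient becomes $1$; using $(h/h')'=1-hh''/(h')^2$ and then the expressions above, this collapses to
\[
\di_\Sigma X = k-\frac{h(r)h''(r)}{h'(r)^2}|\n_\Sigma r|^2 = k+\frac{h(r)^2}{(n-1)h'(r)^2}\,\ric_{M^n(c)}(\n r)\,|\n_\Sigma r|^2.
\]
Integrating over $\Sigma$, using the divergence theorem $\int_\Sigma\di_\Sigma X\,d\Sigma=-k\int_\Sigma\lan X,\vec{H}\ran d\Sigma+\int_{\partial\Sigma}\lan X,\nu\ran ds$ (the factor $k$ because $\vec{H}$ is the normalized mean curvature vector, $\nu$ the outward unit conormal), and the bounds $|\lan\n r,\vec{H}\ran|\le|\vec{H}|$, $\lan\n r,\nu\ran\le1$ (as $|\n r|=1$), I obtain the master inequality
\[
k|\Sigma| \le k\int_\Sigma\frac{h}{h'}|\vec{H}|\,d\Sigma + \int_{\partial\Sigma}\frac{h}{h'}\,ds - \int_\Sigma\frac{h^2}{(n-1)(h')^2}\,\ric_{M^n(c)}(\n r)\,|\n_\Sigma r|^2\,d\Sigma.
\]

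The remaining work is sign bookkeeping governed by two monotonicity facts. From the last identity of the first paragraph, $\frac{d}{dr}(h/h')=\frac{1-\frac{mn}{2}h^{2-n}}{(h')^2}$, so $r\mapsto h/h'$ is decreasing when $h<(mn/2)^{1/(n-2)}$ and increasing when $h>(mn/2)^{1/(n-2)}$; and $\ric_{M^n(c)}(\n r)=(n-1)\big(c-\frac{(n-2)m}{2}h^{-n}\big)$ is negative when $h<(m(n-2)/2c)^{1/n}$ (automatically if $c\le0$) and positive when $h>(m(n-2)/2c)^{1/n}$. In case (i) both comparisons act in the same direction: $h\ge d_\Sigma$ along $\Sigma$ forces $h/h'\le d_\Sigma/\sqrt{1-md_\Sigma^{2-n}-cd_\Sigma^2}$, one checks from the defining inequality $1-ms^{2-n}-cs^2>0$ of the de Sitter--Schwarzschild range that $\ric_{M^n(c)}(\n r)<0$ there, and $h^2/(h')^2\le d_\Sigma^2/(1-md_\Sigma^{2-n}-cd_\Sigma^2)$; inserting these into the master inequality and dividing by $k$ is exactly (\ref{iso-SS1-d}). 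Estimating further $-\ric_{M^n(c)}(\n r)\le(n-1)\big(\tfrac{(n-2)m}{2}d_\Sigma^{-n}-c\big)$ and $|\n_\Sigma r|\le1$ and solving for $|\Sigma|$ produces (\ref{iso-SS1-mod-3}), where the stated $C_1(d_\Sigma)$ appears after the identity $k(1-md_\Sigma^{2-n}-cd_\Sigma^2)-\tfrac{(n-2)m}{2}d_\Sigma^{2-n}+cd_\Sigma^2=(1-\tfrac{mn}{2}d_\Sigma^{2-n})+(k-1)(1-md_\Sigma^{2-n}-cd_\Sigma^2)$. Cases (ii) and (iii) run the same way with the endpoints interchanged: in (ii), $h/h'$ increasing bounds the first two terms at $R_\Sigma$ while $\ric_{M^n(c)}(\n r)>0$ makes $h^2/(h')^2\ge d_\Sigma^2/(1-md_\Sigma^{2-n}-cd_\Sigma^2)$ the right bound on the (now positive) Ricci term, giving (\ref{iso-SS1-R0}); in (iii), $h/h'$ increasing and $\ric_{M^n(c)}(\n r)<0$, and all three factors are evaluated at $R_\Sigma$, giving (\ref{iso-SS1-R}). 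For (iv) I would use only that $h>(mn/2)^{1/(n-2)}$ forces $hh''/(h')^2<1$, hence $\di_\Sigma X\ge k-1$ pointwise, so $(k-1)|\Sigma|\le k\int_\Sigma\frac{h}{h'}|\vec{H}|+\int_{\partial\Sigma}\frac{h}{h'}$, and then $h/h'\le R_\Sigma/\sqrt{1-mR_\Sigma^{2-n}-cR_\Sigma^2}$ gives (\ref{iso-SS1-mod}); for $c<0$, the elementary bound $s/\sqrt{1-ms^{2-n}-cs^2}<1/\sqrt{-c}$ valid for $s>m^{1/(n-2)}$ (and $R_\Sigma>(mn/2)^{1/(n-2)}>m^{1/(n-2)}$ since $n\ge3$) upgrades this to (\ref{iso-SS1-mod-2}). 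The equality case is immediate: on a slice $\{s\}\times\s^{n-1}$ one has $\n_\Sigma r\equiv0$, $\partial\Sigma=\emptyset$, $d_\Sigma=R_\Sigma=s$, $h/h'$ constant, and $\vec{H}=-\frac{h'}{h}\n r$, so $-\lan\n r,\vec{H}\ran=|\vec{H}|$ and every inequality used collapses to an equality, giving equality in (\ref{iso-SS1-d}), (\ref{iso-SS1-R0}), (\ref{iso-SS1-R}).

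The one step I expect to be the real obstacle is the first: spotting that $X=(h/h')\n r$ is the right test field, so that $\di_\Sigma X$ reduces to $k$ plus a clean multiple of $\ric_{M^n(c)}(\n r)|\n_\Sigma r|^2$ with no leftover first-derivative terms. After that, the only care needed is matching, in each of the four regimes, the correct endpoint ($d_\Sigma$ or $R_\Sigma$) to the boundary and mean-curvature terms and to the Ricci term, which amounts to pairing the monotonicity direction of $h/h'$ with the sign of $\ric_{M^n(c)}(\n r)$; a minor subtlety is that in case (i) (and the sub-cases of (iii)) it is the region hypothesis together with $1-ms^{2-n}-cs^2>0$ that pins down that sign.
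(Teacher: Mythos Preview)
Your proposal is correct and follows essentially the same route as the paper: the paper packages the divergence identity for $X=\dfrac{h}{h'}\nabla r$ as Proposition~\ref{Teo-1} (reached via the intermediate step $\di_\Sigma(h\nabla_\Sigma r)=kh'+kh\langle\vec H,\nabla r\rangle$ on the tangential field) and then runs exactly your monotonicity/sign analysis of $h/h'$ and $\ric_{M^n(c)}(\nabla r)$ case by case. Your treatment of item~(iv) via the pointwise bound $\di_\Sigma X\ge k-1$ is a slightly cleaner variant of the paper's argument, which instead bounds the scalar function $f(h)=\dfrac{hh''}{(h')^2}<1$ inside the integral; the two are equivalent.
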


\begin{remark}
{\normalfont
Since $C_1(d_\Sigma)<0$ for $d_\Sigma$ near from $s_0,$ the inequality (\ref{iso-SS1-mod-3}) holds only away from $s_0,$ for those $d_\Sigma$ such that $C_1(d_\Sigma)>0.$ Since $C_1\left(\left(\frac{mn}{2}\right)^{\frac{1}{n-2}}\right)=\frac{1}{k-1}>0,$ there exists $\overline{s}\in\left(s_0,\left(\frac{mn}{2}\right)^{\frac{1}{n-2}}\right),$ depending on $m,n,c$ and $k,$ such that $C_1(d_\Sigma)>0$ for every $d_\Sigma\in \left(\overline{s},\left(\frac{mn}{2}\right)^{\frac{1}{n-2}}\right).$
}
\end{remark}

If $k=n,$ we obtain an isoperimetric inequality for domains in the de Sitter-Schwarzschild manifold:

\begin{corollary}
Let $\Omega$ be a domain of the de Sitter-Schwarzschild manifold with smooth boundary $\partial\Omega.$
\begin{itemize}
\item[(i)] If $\Omega \subset \left(s_0,\left(\frac{mn}{2}\right)^{\frac{1}{n-2}}\right)\times \s^{n-1},\ n\geq3,$ then 
\begin{equation}\label{iso-SS1-domain-1}
|\Omega|\leq C_1(d_\Omega)|\partial\Omega|,
\end{equation}
where 
\[
C_1(d_\Omega) = \dfrac{d_\Omega\sqrt{1-md_\Omega^{2-n}-cd_\Omega^2}}{(1-\frac{mn}{2}d_\Omega^{2-n})+(n-1)(1-md_\Omega^{2-n}-cd_\Omega^2)},
\]
and
$d_\Omega=\min\left\{s\in\left(s_0,\left(\frac{mn}{2}\right)^{\frac{1}{n-2}}\right);\Omega\cap \{\{s\}\times \s^{n-1}\}\neq\emptyset\right\}.$
\item[(ii)] If $\Omega\subset\left(\left(\frac{mn}{2}\right)^{\frac{1}{n-2}},s_1\right)\times\s^{n-1},\ n\geq3,$ then
\begin{equation}\label{iso-SS1-domain-2}
|\Omega|\leq \dfrac{R_\Omega}{(n-1)\sqrt{1-mR_\Omega^{2-n}-cR_\Omega^2}}|\partial\Omega|,
\end{equation}
where $R_\Omega=\max\left\{s\in  \left(\left(\frac{mn}{2}\right)^{\frac{1}{n-2}},s_1\right);\Omega\cap \{\{s\}\times \s^{n-1}\}\neq\emptyset\right\}.$ In particular, if $c<0,$ then
\begin{equation}\label{iso-SS1-domain-3}
|\Omega|\leq\frac{1}{\sqrt{-c}(n-1)}|\partial\Omega|.
\end{equation}

\end{itemize} 
\end{corollary}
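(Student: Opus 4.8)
The plan is to specialize Theorem~\ref{iso-SS1} to the top-dimensional case $k=n$. First I would record the two elementary facts that make this work. A compact, oriented, $n$-dimensional submanifold of $M^n(c)$ with smooth boundary is the closure $\overline{\Omega}$ of a bounded domain $\Omega$, so that $|\Sigma|=|\Omega|$ and $|\partial\Sigma|=|\partial\Omega|$; and its tangent space at every point is all of $T_pM^n(c)$, so its second fundamental form, and in particular the normalized mean curvature vector $\vec H$, vanishes identically, whence $\int_\Sigma|\vec H|\,d\Sigma=0$. Consequently $\n_\Sigma r=\n r$ (so $|\n_\Sigma r|\equiv 1$), the quantities $d_\Sigma$ and $R_\Sigma$ of Theorem~\ref{iso-SS1} become the minimum $d_\Omega$ and maximum $R_\Omega$ of the coordinate $s$ on $\overline{\Omega}$, and every term containing $\vec H$ disappears from the inequalities of Theorem~\ref{iso-SS1}.

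For item~(i), the hypothesis $\Omega\subset\left(s_0,\left(\frac{mn}{2}\right)^{\frac{1}{n-2}}\right)\times\s^{n-1}$ is exactly the hypothesis of item~(i) of Theorem~\ref{iso-SS1}. Applying \eqref{iso-SS1-mod-3} with $k=n$ and $\int_\Sigma|\vec H|\,d\Sigma=0$ gives $|\Omega|\leq C_1(d_\Omega)|\partial\Omega|$, and the constant $C_1(d_\Sigma)$ of the theorem becomes, upon replacing $k-1$ by $n-1$, precisely the $C_1(d_\Omega)$ in the statement; this proves \eqref{iso-SS1-domain-1}. (Equivalently, one may put $k=n$, $\vec H\equiv 0$ and $|\n_\Sigma r|\equiv 1$ directly into \eqref{iso-SS1-d}, bound $\int_\Omega\ric_{M^n(c)}(\n r)\,d\Omega$ from below by $|\Omega|$ times the radial Ricci curvature of $M^n(c)$ at $s=d_\Omega$, which by \eqref{defi-SS} equals $(n-1)\left(c-\frac{(n-2)m}{2d_\Omega^n}\right)$ and is monotone in $s$, and solve the resulting linear inequality for $|\Omega|$; this is precisely the argument used to pass from \eqref{iso-SS1-d} to \eqref{iso-SS1-mod-3}.) For item~(ii), the hypothesis is that of item~(iv) of Theorem~\ref{iso-SS1}, so \eqref{iso-SS1-mod} with $k=n$ and $\vec H\equiv 0$ yields \eqref{iso-SS1-domain-2}, and when $c<0$ the same substitution in \eqref{iso-SS1-mod-2} yields \eqref{iso-SS1-domain-3}; the last bound can also be obtained from \eqref{iso-SS1-domain-2} by observing that $R\mapsto R/\sqrt{1-mR^{2-n}-cR^2}$ stays below its limit $1/\sqrt{-c}$ on the range $R>\left(\frac{mn}{2}\right)^{\frac{1}{n-2}}$, since there $mR^{2-n}<1$.

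There is no genuine obstacle: all the analytic content is in Theorem~\ref{iso-SS1}, and the proof of the Corollary is bookkeeping. The only point worth flagging, as in the Remark following Theorem~\ref{iso-SS1}, is that \eqref{iso-SS1-domain-1} carries information only for those $d_\Omega$ with $C_1(d_\Omega)>0$, i.e.\ for $d_\Omega$ bounded away from $s_0$; for $d_\Omega$ near $s_0$ the denominator of $C_1(d_\Omega)$ is negative and the inequality, though still a formal specialization of \eqref{iso-SS1-mod-3}, is vacuous.
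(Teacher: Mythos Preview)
Your proposal is correct and follows exactly the paper's approach: the corollary is stated immediately after Theorem~\ref{iso-SS1} with the one-line justification ``If $k=n$, we obtain\ldots'', and your argument spells out precisely that specialization (the vanishing of $\vec{H}$ in codimension zero and the identification of $d_\Sigma,R_\Sigma$ with $d_\Omega,R_\Omega$). The additional remarks you make about $|\n_\Sigma r|\equiv 1$ and the vacuity of \eqref{iso-SS1-domain-1} near $s_0$ are accurate and consistent with the paper.
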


Taking $m\ria 0$ in the de Sitter-Schwarzschild manifold, it becomes $\h^n(c)$ for $c<0,$ $\s^n(c)$ for $c>0$ and $\R^n$ for $c=0.$ Thus, as consequences of Theorem \ref{iso-SS1} we obtain isoperimetric inequalities for submanifolds in space forms. First, we present an isoperimetric inequality for submanifolds of the hyperbolic space.

\begin{corollary}\label{iso-Hn}
Let $\Sigma$ be a $k-$dimensional, compact, oriented, submanifold, $k\geq2,$ of the hyperbolic space $\h^n(c),\ n\geq3, \ c<0.$ Then
\begin{equation}\label{ineq-iso-Hn}
|\Sigma| \leq \frac{\tanh(\sqrt{-c}\widetilde{R}_\Sigma)}{\sqrt{-c}k}\left[|\partial\Sigma| + k\int_\Sigma |\vec{H}| d\Sigma\right] + \frac{1}{k}\int_\Sigma\tanh^2(\sqrt{-c}r)|\n_\Sigma r|^2d\Sigma,
\end{equation}
where $\widetilde{R}_\Sigma$ is the radius of the smallest extrinsic ball which contains $\Sigma.$ If $\Sigma$ is a geodesic sphere, then the equality holds.
\end{corollary}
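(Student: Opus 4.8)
The plan is to obtain (\ref{ineq-iso-Hn}) by the same device used for Theorem \ref{iso-SS1}, applied directly to $\h^n(c)$. One could try to simply let $m\ria 0^+$ in Theorem \ref{iso-SS1}, since $M^n(c)$ degenerates to $\h^n(c)$ when $c<0$; but passing to the limit in the \emph{already estimated} inequality (\ref{iso-SS1-R}) replaces the weight $\tanh^2(\sqrt{-c}\,r)$ by the larger constant $\tanh^2(\sqrt{-c}\,\widetilde{R}_\Sigma)$ and only gives a weaker bound, so I would instead rerun the relevant step keeping the Ricci term pointwise in $r$. Throughout, write $\h^n(c)=(0,\infty)\times\s^{n-1}$ with $g=dr^2+h(r)^2 g_{\s^{n-1}}$, $h(r)=\frac{1}{\sqrt{-c}}\sinh(\sqrt{-c}\,r)$, and use the homogeneity of $\h^n(c)$ to place the origin $\{r=0\}$ at the center of the smallest extrinsic ball containing $\Sigma$, so that $r\le\widetilde{R}_\Sigma$ on $\Sigma$.

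The key choice is the test function $\phi=\phi(r)$ with $\phi'=h/h'$. Using the warped-product identity $\hess r=\frac{h'}{h}(g-dr\otimes dr)$ together with $\phi'\,\frac{h'}{h}=1$, $\phi''-1=-\frac{h h''}{(h')^2}$, and $\ric_{\h^n(c)}(\n r)=-(n-1)\frac{h''}{h}$, one finds $\hess\phi=g+\frac{h(r)^2}{(n-1)h'(r)^2}\,\ric_{\h^n(c)}(\n r)\,dr\otimes dr$. Restricting to $\Sigma$, tracing over $T\Sigma$, and adding the mean-curvature term gives
\[
\Delta_\Sigma\phi=k+\frac{h(r)^2}{(n-1)h'(r)^2}\,\ric_{\h^n(c)}(\n r)\,|\n_\Sigma r|^2+k\langle \tfrac{h}{h'}\,\n r,\vec{H}\rangle .
\]
Integrating over $\Sigma$ and using $\int_\Sigma\Delta_\Sigma\phi\,d\Sigma=\int_{\partial\Sigma}\frac{\partial\phi}{\partial\nu}\,ds$ yields
\[
k|\Sigma|=\int_{\partial\Sigma}\frac{\partial\phi}{\partial\nu}\,ds-k\int_\Sigma\tfrac{h}{h'}\langle\n r,\vec{H}\rangle\,d\Sigma-\int_\Sigma\frac{h(r)^2}{(n-1)h'(r)^2}\ric_{\h^n(c)}(\n r)|\n_\Sigma r|^2\,d\Sigma .
\]

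Next I would estimate the first two terms using $|\n_\Sigma\phi|=\frac{h}{h'}|\n_\Sigma r|\le\frac{h(r)}{h'(r)}$ and $|\langle\n r,\vec{H}\rangle|\le|\vec{H}|$, together with the monotonicity of $r\mapsto\frac{h(r)}{h'(r)}=\frac{\tanh(\sqrt{-c}\,r)}{\sqrt{-c}}$ on $(0,\infty)$ — the $\h^n$ analogue of the radial restriction in Theorem \ref{iso-SS1}(iii), which here holds everywhere — so that $\frac{h(r)}{h'(r)}\le\frac{h(\widetilde{R}_\Sigma)}{h'(\widetilde{R}_\Sigma)}$ on $\Sigma$ (hence on $\partial\Sigma$). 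This gives
\[
k|\Sigma|\le\frac{h(\widetilde{R}_\Sigma)}{h'(\widetilde{R}_\Sigma)}\Big[|\partial\Sigma|+k\int_\Sigma|\vec{H}|\,d\Sigma\Big]-\int_\Sigma\frac{h(r)^2}{(n-1)h'(r)^2}\ric_{\h^n(c)}(\n r)|\n_\Sigma r|^2\,d\Sigma .
\]
Substituting $h(r)=\frac{1}{\sqrt{-c}}\sinh(\sqrt{-c}\,r)$, $h'(r)=\cosh(\sqrt{-c}\,r)$ and $\ric_{\h^n(c)}(\n r)=(n-1)c$ gives $\frac{h(\widetilde{R}_\Sigma)}{h'(\widetilde{R}_\Sigma)}=\frac{\tanh(\sqrt{-c}\,\widetilde{R}_\Sigma)}{\sqrt{-c}}$ and $-\frac{h(r)^2}{(n-1)h'(r)^2}\ric_{\h^n(c)}(\n r)=\tanh^2(\sqrt{-c}\,r)$; dividing by $k$ is exactly (\ref{ineq-iso-Hn}).

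For the equality statement, if $\Sigma=\{r=\rho\}$ is a geodesic sphere then $\partial\Sigma=\emptyset$, $\n_\Sigma r=0$, $r\equiv\rho=\widetilde{R}_\Sigma$, and $\vec{H}=-\sqrt{-c}\coth(\sqrt{-c}\,\rho)\,\n r$; each inequality used above then becomes an equality, so (\ref{ineq-iso-Hn}) is sharp. The main difficulty is conceptual rather than computational: recognizing that the correct test function is $\phi'=h/h'$ (not the naive $\phi'=h$, which would lose the sharp constant), and that its monotonicity is precisely what allows one to pull $h/h'$ out at $\widetilde{R}_\Sigma$ while keeping the weight $h(r)^2/h'(r)^2$ — equivalently $\tanh^2(\sqrt{-c}\,r)$ after contracting with $\ric_{\h^n(c)}$ — inside the integral.
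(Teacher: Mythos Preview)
Your proposal is correct and is essentially the paper's own argument: the identity you obtain from $\Delta_\Sigma\phi$ with $\phi'=h/h'$ is exactly the $f\equiv1$ case of Proposition~\ref{Teo-1} (the vector field there is $\tfrac{h}{h'}\n_\Sigma r=\n_\Sigma\phi$), and the paper then substitutes the hyperbolic data into the resulting inequality (\ref{iso-SS-prov}) and bounds only the boundary and mean-curvature factors by $\tanh(\sqrt{-c}\,\widetilde R_\Sigma)/\sqrt{-c}$ via the monotonicity of $h/h'$, keeping the Ricci weight $(h/h')^2=\tfrac{1}{-c}\tanh^2(\sqrt{-c}\,r)$ inside the integral---precisely your plan. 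Your explicit remark that one may recenter $r$ at the center of the smallest extrinsic ball (using the homogeneity of $\h^n(c)$) is a point the paper leaves implicit.
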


\begin{remark}
{\normalfont
It is possible to obtain another proof of Corollary \ref{iso-Hn} from the proofs of Theorem 6 (b), p. 185 of \cite{CG-Manusc}, for $\vec{H}=0$ and Corollary 3.6, p.533 of \cite{Seo}, for arbitrary $\vec{H}.$
}
\end{remark}

\begin{remark}
{\normalfont Since the de Sitter-Schwarzschild manifold, $c<0,$ becomes $\h^n(c)$ when $m\ria 0,$ the inequality (\ref{iso-SS1-domain-3}) holds also for $\h^n(c).$ This fact was proved first by S.-T.Yau in \cite{Yau}, see Proposition 3, p.498.
}
\end{remark}

The next corollary is an isoperimetric inequality for submanifolds of the open hemisphere $\s^n_+(c).$ 

\begin{corollary}\label{iso-Sn}
Let $\Sigma$ be a $k-$dimensional, compact, oriented, submanifold, $k\geq2,$ of the open hemisphere $\s^n_+(c),\ n\geq3, \ c>0.$ Then
\begin{equation}\label{ineq-iso-Sn}
|\Sigma| \leq \frac{\tan(\sqrt{c}\widetilde{R}_\Sigma)}{k\sqrt{c}}\left[|\partial\Sigma| + k\int_\Sigma |\vec{H}| d\Sigma\right] - \frac{1}{k}\int_\Sigma\tan^2(\sqrt{c}r)|\n_\Sigma r|^2 d\Sigma,
\end{equation}
where $\widetilde{R}_\Sigma$ is the radius of the smallest extrinsic ball which contains $\Sigma.$ If $\Sigma$ is a geodesic sphere, then the equality holds.
\end{corollary}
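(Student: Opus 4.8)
The plan is to run the spherical version of the scheme behind Theorem~\ref{iso-SS1}; equivalently, this is the limit $m\ria 0$ (with $c>0$) of that \emph{argument}, since then the de Sitter-Schwarzschild metric $\frac{1}{1-cs^2}ds^2+s^2g_{\s^{n-1}}$ on $\{0<s<1/\sqrt{c}\}$ becomes the metric of $\s^n_+(c)$, and the whole computation is the exact spherical counterpart of the one yielding Corollary~\ref{iso-Hn} (the case $c<0$). By Example~\ref{space-forms}, $\s^n_+(c)$ carries the warped metric $g=dr^2+h(r)^2 g_{\s^{n-1}}$ with $h(r)=\frac{1}{\sqrt{c}}\sin(\sqrt{c}r)$, $r\in\left(0,\frac{\pi}{2\sqrt{c}}\right)$, so $r$ is the distance to the pole $r=0$, $h'(r)=\cos(\sqrt{c}r)$, $h'(r)^2=1-ch(r)^2$, and $h''(r)=-ch(r)$. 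Since $\Sigma$ is compact and lies in an open hemisphere, after an isometry of $\s^n(c)$ I may assume $r$ is the distance to the center of the smallest extrinsic ball containing $\Sigma$, so that $0<r\leq\widetilde{R}_\Sigma<\frac{\pi}{2\sqrt{c}}$ along $\Sigma$.

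First I would restrict to $\Sigma$ the vector field $X=\dfrac{h(r)}{h'(r)}\,\n r=\dfrac{\tan(\sqrt{c}r)}{\sqrt{c}}\,\n r$. Using the warped-product Hessian identity $\hess r=\dfrac{h'(r)}{h(r)}\bigl(g-dr\otimes dr\bigr)$ one computes the tangential divergence $\di_\Sigma X=k-\dfrac{h(r)h''(r)}{h'(r)^2}\,|\n_\Sigma r|^2$, which here becomes
\[
\di_\Sigma X=k+\tan^2(\sqrt{c}r)\,|\n_\Sigma r|^2,
\]
whose last coefficient equals $\dfrac{h(r)^2}{(n-1)h'(r)^2}\,\ric_{\s^n(c)}(\n r)$ because $\ric_{\s^n(c)}(\n r)=(n-1)c$. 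Integrating this identity over $\Sigma$, applying the divergence theorem for submanifolds, and using Cauchy--Schwarz, I obtain
\[
k|\Sigma|+\int_\Sigma\tan^2(\sqrt{c}r)\,|\n_\Sigma r|^2\,d\Sigma\leq k\int_\Sigma|X|\,|\vec{H}|\,d\Sigma+\int_{\partial\Sigma}|X|\,ds.
\]
Since $|X|=\dfrac{\tan(\sqrt{c}r)}{\sqrt{c}}$ is nondecreasing in $r$ on $\left(0,\frac{\pi}{2\sqrt{c}}\right)$, it is bounded on $\Sigma$ by $\dfrac{\tan(\sqrt{c}\widetilde{R}_\Sigma)}{\sqrt{c}}$; substituting this and dividing by $k$ gives precisely~(\ref{ineq-iso-Sn}).

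For the equality case I would take $\Sigma$ to be a geodesic sphere; after an ambient isometry it is the distance sphere $\{r=r_0\}$, $r_0=\widetilde{R}_\Sigma$, of a totally geodesic $\s^{k+1}(c)\subset\s^n(c)$, so that $\partial\Sigma=\emptyset$, $r\equiv r_0$ and $\n_\Sigma r=0$ on $\Sigma$, and $\vec{H}$ is parallel to $\n r$ with $|\vec{H}|=\dfrac{h'(r_0)}{h(r_0)}=\sqrt{c}\cot(\sqrt{c}r_0)$. Then $\di_\Sigma X\equiv k$, the bound $|X|\leq\dfrac{\tan(\sqrt{c}\widetilde{R}_\Sigma)}{\sqrt{c}}$ holds with equality, and Cauchy--Schwarz is saturated (both $X$ and $\vec{H}$ are radial), so every estimate above becomes an equality and hence so does~(\ref{ineq-iso-Sn}). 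Given Theorem~\ref{iso-SS1} the argument is essentially routine; the only points that require care are the sign bookkeeping in the divergence theorem and the identity $\dfrac{h(r)^2}{(n-1)h'(r)^2}\ric_{\s^n(c)}(\n r)=\tan^2(\sqrt{c}r)$ that produces the clean curvature term. Unlike the de Sitter-Schwarzschild case there is no monotonicity of $s\mapsto h/h'$ to extract beyond ``$\tan$ is increasing'', so I expect no genuine obstacle.
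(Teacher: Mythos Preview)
Your proof is correct and follows essentially the same route as the paper: the paper quotes the intermediate inequality (\ref{iso-SS-prov}) (itself the $f\equiv1$ case of Proposition~\ref{Teo-1}, obtained by integrating the tangential divergence of $\frac{h}{h'}\nabla r$) and then substitutes $h(r)=\frac{1}{\sqrt{c}}\sin(\sqrt{c}r)$, $\frac{h}{h'}=\frac{\tan(\sqrt{c}r)}{\sqrt{c}}$, $\ric_{\s^n_+(c)}(\nabla r)=(n-1)c$, and the bound $\tan(\sqrt{c}r)\le\tan(\sqrt{c}\widetilde R_\Sigma)$. You simply rederive that divergence identity directly in the spherical setting rather than citing it, and your treatment of the equality case (a $k$-sphere inside a totally geodesic $\s^{k+1}(c)$) is a bit more explicit than the paper's reference to slices, but the mechanism is identical.
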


\begin{remark}
{\normalfont
It is possible to obtain another proof of Corollary \ref{iso-Hn} from the proofs of Theorem 6 (a), p. 185, of \cite{CG-Manusc}, for $\vec{H}=0$ and Corollary 3.4, p.533, of \cite{Seo}, for arbitrary $\vec{H}.$
}
\end{remark}

Our second result is the following isoperimetric inequality for submanifolds in the Reissner-Nordstrom manifold:

\begin{theorem}\label{iso-RN} Let $\Sigma$ be a $k-$dimensional, compact, oriented, submanifold, $k\geq2,$ of the Reissner-Nordstrom manifold $M^n=(s_0,\infty)\times\s^{n-1},$ $\ n\geq3.$ Let $s_2\!=\!\left(\frac{4(n-1)q^2}{mn -\sqrt{m^2n^2 - 16(n-1)q^2}}\right)^{\frac{1}{n-2}}.$
\begin{itemize}
\item[(i)] If $\Sigma\subset(s_0,s_2)\times\s^{n-1},$ then
\begin{equation}\label{iso-RN-d1}
\begin{split}
|\Sigma|&\leq \frac{d_\Sigma}{k\sqrt{1-md_\Sigma^{2-n}+q^2d_\Sigma^{4-2n}}}\left[|\partial\Sigma| + k\int_\Sigma |\vec{H}|d\Sigma\right] \\
&\qquad- \frac{d_{\Sigma}^2}{k(n-1)(1-md_{\Sigma}^{2-n}+q^2d_{\Sigma}^{4-2n})}\int_\Sigma\ric_{M^n}(\n r)|\n_\Sigma r|^2 d\Sigma.
\end{split}
\end{equation}
In particular,
\begin{equation}\label{iso-RN-mod-1}
|\Sigma|\leq \frac{d_\Sigma}{(C_2(d_\Sigma) - k)\sqrt{1-md_\Sigma^{2-n}+q^2d_\Sigma^{4-2n}}}\left[|\partial\Sigma| + k\int_\Sigma |\vec{H}|d\Sigma\right],
\end{equation}
where $d_\Sigma=\min\left\{s\in(s_0,\infty);\Sigma\cap \{\{s\}\times \s^{n-1}\}\neq\emptyset\right\}.$
\item[(ii)] If $\Sigma\subset\left(s_2, \infty \right)\times\s^{n-1},$ then 
\begin{equation}\label{iso-RN-R}
\begin{split}
|\Sigma|&\leq \frac{R_\Sigma}{k\sqrt{1-mR_\Sigma^{2-n}+q^2R_\Sigma^{4-2n}}}\left[|\partial\Sigma| + k\int_\Sigma |\vec{H}|d\Sigma\right]\\
&\qquad - \frac{R_{\Sigma}^2}{k(n-1)(1-mR_{\Sigma}^{2-n}+q^2R_{\Sigma}^{4-2n})}\int_\Sigma\ric_{M^n}(\n r)|\n_\Sigma r|^2 d\Sigma.
\end{split}
\end{equation}
In particular,
\begin{equation}\label{iso-RN-mod-3}
|\Sigma|\leq \frac{R_\Sigma}{(C_2(d_\Sigma) - k)\sqrt{1-mR_\Sigma^{2-n}+q^2R_\Sigma^{4-2n}}}\left[|\partial\Sigma| + k\int_\Sigma |\vec{H}|d\Sigma\right],
\end{equation}
where $R_\Sigma=\max\left\{s\in  (s_2,\infty);\Sigma\cap \{\{s\}\times \s^{n-1}\}\neq\emptyset\right\}.$
\end{itemize}
Moreover, if $\Sigma$ is a slice $\{s\}\times\s^{n-1}$ then the equality holds for both inequalities (\ref{iso-RN-d1}) and (\ref{iso-RN-R}).
Here,
\[
C_2(d_\Sigma)=\frac{n-2}{2 d_\Sigma^{n-2}}\!\!\left(m-\frac{2q^2}{d_\Sigma^{n-2}}\right)\frac{1}{1-md_\Sigma^{2-n}+q^2d_\Sigma^{4-2n}} = \frac{m(n-2)}{2}d_\Sigma^{2-n} + O(d_\Sigma^{4-2n}).
\]
\end{theorem}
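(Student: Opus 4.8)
The plan is to apply the divergence theorem on $\Sigma$ to a radial vector field of $M^n$ whose coefficient is tailored to the warping function, and then to exploit two opposite monotonicity behaviours of $s\mapsto s/\sqrt{V(s)}$, where $V(s):=1-ms^{2-n}+q^2s^{4-2n}$, separated precisely at $s_2$. First I would record the warped-product identities for the distance parameter $r$: $\overline{\nabla}r=\partial_r$, $|\overline{\nabla}r|=1$, $\hess_{M^n}r=\tfrac{h'(r)}{h(r)}\big(g-dr\otimes dr\big)$, and $\ric_{M^n}(\overline{\nabla}r)=-(n-1)\tfrac{h''(r)}{h(r)}$. By (\ref{defi-RN}), $h'(r)^2=V(h(r))$, hence $h''=\tfrac12V'(h)$, and since $V'(s)=(n-2)s^{1-n}\big(m-2q^2s^{2-n}\big)>0$ on $(s_0,\infty)$ (at $s_0$, $m-2q^2s_0^{2-n}=\sqrt{m^2-4q^2}>0$), we get $h''>0$ and $\ric_{M^n}(\overline{\nabla}r)<0$ everywhere on $M^n$.

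For a smooth function $\varphi$ and an orthonormal frame $e_1,\dots,e_k$ of $T\Sigma$, the Hessian identity gives
\[
\di_\Sigma\big(\varphi(r)\overline{\nabla}r\big)=\varphi'(r)\,|\nabla_\Sigma r|^2+\varphi(r)\,\tfrac{h'(r)}{h(r)}\big(k-|\nabla_\Sigma r|^2\big),
\]
and the divergence theorem on $\Sigma$ yields $\int_\Sigma\di_\Sigma(\varphi(r)\overline{\nabla}r)\,d\Sigma=\int_{\partial\Sigma}\varphi(r)\langle\overline{\nabla}r,\nu\rangle\,ds-k\int_\Sigma\varphi(r)\langle\overline{\nabla}r,\vec H\rangle\,d\Sigma$. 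The decisive choice is $\varphi=h/h'$ (which is smooth on $M^n$, since $h'>0$ there): then $\varphi'-\varphi\tfrac{h'}{h}=\varphi'-1=-\tfrac{h h''}{(h')^2}$, so
\[
\di_\Sigma\Big(\tfrac{h}{h'}\,\overline{\nabla}r\Big)=k-\tfrac{h h''}{(h')^2}\,|\nabla_\Sigma r|^2=k+\tfrac{h^2}{(n-1)(h')^2}\,\ric_{M^n}(\overline{\nabla}r)\,|\nabla_\Sigma r|^2 .
\]
Integrating over $\Sigma$ and using $|\langle\overline{\nabla}r,\nu\rangle|\le 1$, $|\langle\overline{\nabla}r,\vec H\rangle|\le|\vec H|$, and $\ric_{M^n}(\overline{\nabla}r)<0$, the whole problem reduces to bounding $\tfrac{h}{h'}=\tfrac{s}{\sqrt{V(s)}}$ and $\tfrac{h^2}{(h')^2}=\tfrac{s^2}{V(s)}$ along $\Sigma$.

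That is where $s_2$ enters. I would prove the monotonicity lemma: $\frac{d}{ds}\!\big(\tfrac{s}{\sqrt{V(s)}}\big)=\frac{2V(s)-sV'(s)}{2V(s)^{3/2}}$, and $2V(s)-sV'(s)=2-nms^{2-n}+2(n-1)q^2s^{4-2n}$; writing $u=s^{n-2}$ this equals $u^{-2}\big(2u^2-nmu+2(n-1)q^2\big)$, a parabola in $u$ whose larger root is exactly $s_2^{n-2}$ (the discriminant $n^2m^2-16(n-1)q^2$ is positive because $m>2q$ and $(n-2)^2\ge0$), and whose value at $s=s_0$ is $-s_0V'(s_0)<0$; since the domain is $s>s_0$, it follows that $2V-sV'<0$ on $(s_0,s_2)$ and $>0$ on $(s_2,\infty)$, i.e.\ $s\mapsto s/\sqrt{V(s)}$ is decreasing on $(s_0,s_2)$ and increasing on $(s_2,\infty)$. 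In case (i), $R_\Sigma<s_2$, hence on $\Sigma$ one has $\tfrac{h}{h'}\le\tfrac{d_\Sigma}{\sqrt{V(d_\Sigma)}}$ and $\tfrac{h^2}{(h')^2}\le\tfrac{d_\Sigma^2}{V(d_\Sigma)}$, and the integrated identity becomes (\ref{iso-RN-d1}); in case (ii), $d_\Sigma>s_2$ and the same estimates hold with $R_\Sigma$ replacing $d_\Sigma$, giving (\ref{iso-RN-R}). For the ``In particular'' inequalities I would notice that $\tfrac{h h''}{(h')^2}=\tfrac{sV'(s)}{2V(s)}=C_2(s)$, so $\di_\Sigma(\tfrac{h}{h'}\overline{\nabla}r)=k-C_2(h)\,|\nabla_\Sigma r|^2$; estimating $\int_\Sigma C_2(h)\,|\nabla_\Sigma r|^2\,d\Sigma\le C_2(d_\Sigma)\,|\Sigma|$ (using monotonicity of $C_2$ on the relevant subinterval and $|\nabla_\Sigma r|\le 1$) and moving this term to the left-hand side yields (\ref{iso-RN-mod-1}) and (\ref{iso-RN-mod-3}) after rearrangement.

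Finally, for the equality case: if $\Sigma=\{s\}\times\s^{n-1}$ then $\partial\Sigma=\emptyset$, $\nabla_\Sigma r\equiv0$, $d_\Sigma=R_\Sigma=s$, and $\Sigma$ is totally umbilical with $\vec H=-\tfrac{h'(r)}{h(r)}\overline{\nabla}r$ where $h(r)=s$, so $|\vec H|=\tfrac{\sqrt{V(s)}}{s}$; the right-hand sides of (\ref{iso-RN-d1}) and (\ref{iso-RN-R}) collapse to $\tfrac{s}{k\sqrt{V(s)}}\cdot k\cdot\tfrac{\sqrt{V(s)}}{s}\,|\Sigma|=|\Sigma|$, and every intermediate estimate is an equality, so the inequalities are sharp. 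The main obstacle is the monotonicity lemma: one must correctly locate the sign change of $2V(s)-sV'(s)$ on $(s_0,\infty)$ and identify it with $s_2$ (this uses $m>2q$ and $n\ge3$ for the discriminant, and the negativity of $2V-sV'$ at $s_0$), and, for the refined inequalities, verify that $C_2$ is monotone on each of $(s_0,s_2)$ and $(s_2,\infty)$; these are elementary computations, but the direction of every subsequent estimate depends on getting their signs right.
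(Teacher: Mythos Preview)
Your proposal is correct and follows essentially the same route as the paper: both derive the divergence identity $\di_\Sigma\big(\tfrac{h}{h'}\nabla r\big)=k-\tfrac{hh''}{(h')^2}|\nabla_\Sigma r|^2$ (the paper packages this as Proposition~\ref{Teo-1}), integrate it, and then reduce everything to the monotonicity of $s/\sqrt{V(s)}$ and the sign of the Ricci term; the split at $s_2$ and the bound $C_2(h)\le C_2(d_\Sigma)$ are handled the same way. Your argument for locating the sign change of $2V-sV'$ is in fact a bit slicker than the paper's: you observe directly that $2V(s_0)-s_0V'(s_0)=-s_0V'(s_0)<0$, so $s_0^{n-2}$ lies strictly between the two roots of the quadratic, whereas the paper compares the auxiliary polynomials $P(u)=1-\tfrac{mn}{2}u+(n-1)q^2u^2$ and $Q(u)=1-mu+q^2u^2$ to order their roots. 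One point to make explicit when you write it up: the paper shows $C_2(s)=f(s^{2-n})$ with $f'(t)=\tfrac{(n-2)(mq^2t^2-4q^2t+m)}{2(1-mt+q^2t^2)^2}>0$ for all $t$ (using $m>2q$), so $C_2$ is globally decreasing on $(s_0,\infty)$, not merely monotone on each subinterval; you will need this direction for the estimate $C_2(h)\le C_2(d_\Sigma)$.
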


\begin{remark}
{\normalfont
Since $C_2(d_\Sigma)\ria\infty$ when $d_\Sigma\ria s_0,$ the inequalities (\ref{iso-RN-mod-1}) and (\ref{iso-RN-mod-3}) holds only away from $s_0,$ i.e., for $C_2(d_\Sigma)<k.$ On the other hand, since $C_2(d_\Sigma)\ria 0$ when $d_\Sigma\ria\infty,$ there exists $\overline{s}\in(s_0,\infty)$ depending on $m,q,n$ and $k,$ such that $C_2(d_\Sigma)<k$ for every $d_\Sigma\in (\overline{s},\infty).$
}
\end{remark}

If $k=n,$ we obtain an isoperimetric inequality for domains in the Reissner-Nordstrom manifold.

\begin{corollary}
Let $\Omega$ be a compact domain in the Reissner-Nordstrom manifold with smooth boundary $\partial\Omega.$ 

\begin{itemize}
\item[(i)] If $\Omega\subset(s_0,s_2)\times\s^{n-1},\ n\geq3,$ then
\[
|\Omega|\leq \frac{d_\Omega}{(C_2(d_\Omega) - n)\sqrt{1-md_\Omega^{2-n}+q^2d_\Omega^{4-2n}}}|\partial\Omega|,
\]
where $d_\Omega=\min\left\{s\in(s_0,\infty);\Omega\cap \{\{s\}\times \s^{n-1}\}\neq\emptyset\right\}.$
\item[(ii)] If $\Omega\subset\left(s_2, \infty \right)\times\s^{n-1},\ n\geq3,$ then 
\[
|\Omega|\leq \frac{R_\Omega}{(C_2(d_\Omega) - n)\sqrt{1-mR_\Omega^{2-n}+q^2R_\Omega^{4-2n}}}|\partial\Omega|,
\]
where $R_\Omega=\max\left\{s\in  (s_2,\infty);\Omega\cap \{\{s\}\times \s^{n-1}\}\neq\emptyset\right\}.$
\end{itemize}
\end{corollary}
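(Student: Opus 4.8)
The plan is to deduce the corollary from Theorem~\ref{iso-RN} by taking $k=n$ and $\Sigma=\Omega$. A compact domain $\Omega\subset M^n$ with smooth boundary is a compact, oriented, $n$-dimensional submanifold of the Reissner-Nordstrom manifold with $\partial\Sigma=\partial\Omega$ (orientability is automatic, since $M^n=(s_0,\infty)\times\s^{n-1}$ is orientable), so Theorem~\ref{iso-RN} applies with $k=n\ge 3$.

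The key point is that $\Omega$, being an \emph{open} subset of $M^n$, is totally geodesic as a submanifold: its second fundamental form vanishes identically, so the normalized mean curvature vector satisfies $\vec H\equiv 0$, whence $\int_\Omega|\vec H|\,d\Omega=0$. Also $|\partial\Sigma|=|\partial\Omega|$ is the $(n-1)$-dimensional volume of the boundary, and the numbers $d_\Sigma$, $R_\Sigma$ attached to $\Sigma=\Omega$ are precisely the $d_\Omega$, $R_\Omega$ of the statement, while the hypothesis $\Omega\subset(s_0,s_2)\times\s^{n-1}$ (resp. $\Omega\subset(s_2,\infty)\times\s^{n-1}$) is exactly the hypothesis of Theorem~\ref{iso-RN}(i) (resp. (ii)). It then suffices to substitute $k=n$ and $\int_\Omega|\vec H|\,d\Omega=0$ into inequality~(\ref{iso-RN-mod-1}) to obtain part~(i), and into~(\ref{iso-RN-mod-3}) to obtain part~(ii); the constant $C_2$ carries over unchanged. (As the Remark after Theorem~\ref{iso-RN} shows, the bound is meaningful only on the range of $d_\Omega$ where the denominator has the correct sign, and such a range exists because $C_2(d_\Omega)\to 0$ as $d_\Omega\to\infty$.)

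I expect no real obstacle: this is a direct specialization, and the only things to verify are the bookkeeping above — regularity and orientability of $\Omega$, vanishing of $\vec H$ for a full-dimensional submanifold, and the verbatim translation of $d_\Sigma$, $R_\Sigma$, $|\partial\Sigma|$. One could also observe that for full-dimensional $\Omega$ one has $|\n_\Omega r|\equiv 1$, so that in~(\ref{iso-RN-d1}) the term $\int_\Sigma\ric_{M^n}(\n r)|\n_\Sigma r|^2\,d\Sigma$ simplifies to $\int_\Omega\ric_{M^n}(\n r)\,d\Omega$; but since we invoke the already-simplified bounds~(\ref{iso-RN-mod-1}) and~(\ref{iso-RN-mod-3}) directly, this refinement plays no role.
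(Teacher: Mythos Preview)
Your proposal is correct and follows precisely the approach the paper intends: the corollary is stated immediately after Theorem~\ref{iso-RN} with the sentence ``If $k=n,$ we obtain an isoperimetric inequality for domains in the Reissner-Nordstrom manifold,'' and no separate proof is given. Your observation that a full-dimensional $\Omega$ has $\vec H\equiv 0$, together with the direct specialization of~(\ref{iso-RN-mod-1}) and~(\ref{iso-RN-mod-3}) to $k=n$, is exactly the implicit argument.
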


\begin{remark}
{\normalfont 
If $\Sigma$ is a compact, without boundary, embedded, orientable hypersurface of the de Sitter-Schwarzschild manifold or the Reissner-Nordstrom manifold, with constant mean curvature, S. Brendle in \cite{Brendle}, see Corollary 1.2 and Corollary 1.3, pp. 249-250, proved that $\Sigma$ is a slice.
}
\end{remark}
If we suppose in addition that the norm of the mean curvature vector is bounded, we obtain the following monotonicity formula for submanifolds in warped product manifolds. Let $M^n=I\times N^{n-1}$ be the warped product manifold. Hereafter, we denote by \[B_r = \{s\in I| s\leq r\}\times N^{n-1}\subset M^n.\]

\begin{theorem}\label{mono}
Let $I\subset\R$ be an open interval and $N^{n-1},\ n\geq3,$ be a $(n-1)-$dimensional Riemannian manifold. Let $M^n=I\times N^{n-1}$ be endowed with the warped metric $ds^2=dr^2+h(r)^2g_N,$ where $g_N$ is the metric of $N^{n-1}$ and $h'(r)>0$ for all $r\in I.$ Assume also that $\dfrac{h(r)}{h'(r)}$ is non-decreasing for all $r\in I.$ If $\Sigma$ is a $k-$dimensional, proper, oriented, submanifold of $M^n$ such that its mean curvature vector satisfies $k|\vec{H}|\leq \alpha$ for some $\alpha\geq0,$ then 

\begin{itemize}

\item[(i)] the function $V_1:I\ria\R$ given by
\[
V_1(r)=\dfrac{e^{\alpha r}}{h(r)^k}\int_{\Sigma\cap B_r}\!\! h(s) d\Sigma
\]
is monotone non-decreasing. Moreover, 
\begin{equation}\label{est.mono-1}
|\Sigma\cap B_r|\geq C_1(r_0)e^{-\alpha r}h(r)^{k-1},
\end{equation}
for every $r>r_0,\ r_0,r\in I,$ where $\displaystyle{C_1(r_0)=\frac{e^{\alpha r_0}}{h(r_0)^k}\int_{\Sigma\cap B_{r_0}}h(s)d\Sigma};$
\item[(ii)] the function $V_2:I\ria\R$ given by
\[
V_2(r)=\dfrac{e^{\alpha r}}{h(r)^k}\int_{\Sigma\cap B_r}\!\! h'(s) d\Sigma
\]
is monotone non-decreasing. Moreover, if $h'(r)\leq B,\ B>0,$ then 
\begin{equation}\label{est.mono-2}
|\Sigma\cap B_r|\geq \frac{C_2(r_0)}{B}e^{-\alpha r}h(r)^{k},
\end{equation}
and if $h''(r)>0$ then
\begin{equation}\label{est.mono-3}
|\Sigma\cap B_r|\geq \frac{C_2(r_0)}{h'(r)}e^{-\alpha r}h(r)^{k},
\end{equation}
for every $r>r_0,\ r_0,r\in I,$ where $\displaystyle{C_2(r_0)=\frac{e^{\alpha r_0}}{h(r_0)^k}\int_{\Sigma\cap B_{r_0}}h'(s)d\Sigma}.$
\end{itemize}
\end{theorem}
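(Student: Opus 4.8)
The plan is to derive both items from a single differential inequality produced by the first variation formula on $\Sigma$, applied to the conformal vector field of the warped product. Write $s=r|_{\Sigma}$ for the restriction of the coordinate $r$ and set
\[
\theta(r)=\int_{\Sigma\cap B_r}h(s)\,d\Sigma,\qquad \rho(r)=\int_{\Sigma\cap B_r}h'(s)\,d\Sigma,
\]
so that $V_1=e^{\alpha r}h(r)^{-k}\theta(r)$ and $V_2=e^{\alpha r}h(r)^{-k}\rho(r)$; we take $\partial\Sigma=\emptyset$ and assume these integrals finite (e.g.\ $\Sigma$ or $N$ compact; the general case follows by exhaustion). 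For the warped metric one has $\hess^{M}r=\frac{h'(r)}{h(r)}\bigl(g-dr\otimes dr\bigr)$, so the function $\Phi$ with $\Phi'=h$ satisfies $\hess\Phi=h'(r)\,g$; equivalently, $\n\Phi=h(r)\,\partial_r$ is a conformal field, where $\partial_r=\n r$. Tracing this identity over $T\Sigma$ gives $\di_{\Sigma}\bigl(h(s)\,\partial_r\bigr)=k\,h'(s)$ on $\Sigma$. We will also use that $\n_\Sigma r=(\partial_r)^{\top}$ satisfies $|\n_\Sigma r|\le|\partial_r|=1$ and $\langle\partial_r,\n_\Sigma r\rangle=|\n_\Sigma r|^{2}$, and that the hypothesis that $h/h'$ is non-decreasing means exactly that $h'/h$ is non-increasing, so $h'(s)\,h(t)\ge h(s)\,h'(t)$ whenever $s\le t$.

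First I would localize. Fix $r\in I$ and a small $\varepsilon>0$ and choose a smooth non-increasing $\gamma_\varepsilon\colon I\to[0,1]$ equal to $1$ on $\{s\le r\}$ and to $0$ on $\{s\ge r+\varepsilon\}$. Applying $\int_\Sigma\di_\Sigma(\,\cdot\,)\,d\Sigma=-k\int_\Sigma\langle\,\cdot\,,\vec{H}\rangle\,d\Sigma$ (valid since $\partial\Sigma=\emptyset$) to $\gamma_\varepsilon(s)\,h(s)\,\partial_r$, and using $\di_\Sigma(h(s)\partial_r)=kh'(s)$ together with $\n_\Sigma(\gamma_\varepsilon(s))=\gamma_\varepsilon'(s)\,\n_\Sigma r$, gives
\[
-\int_\Sigma\gamma_\varepsilon'(s)\,h(s)\,|\n_\Sigma r|^{2}\,d\Sigma
=k\int_\Sigma\gamma_\varepsilon(s)\,h'(s)\,d\Sigma+k\int_\Sigma\gamma_\varepsilon(s)\,h(s)\,\langle\partial_r,\vec{H}\rangle\,d\Sigma .
\]
On the right I would use $k|\langle\partial_r,\vec{H}\rangle|\le k|\vec{H}|\le\alpha$ and $h>0$ to bound the last term below by $-\alpha\int\gamma_\varepsilon h\,d\Sigma$; on the left I would use $-\gamma_\varepsilon'\ge0$, $h>0$ and $|\n_\Sigma r|^{2}\le1$ to bound the integrand above by $-\gamma_\varepsilon'(s)h(s)$. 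Writing $\Psi_\varepsilon(r)=\int_\Sigma\gamma_{r,\varepsilon}(s)h(s)\,d\Sigma$ and $\Phi_\varepsilon(r)=\int_\Sigma\gamma_{r,\varepsilon}(s)h'(s)\,d\Sigma$ --- smooth, non-decreasing functions of $r$ with $\Psi_\varepsilon'(r)=-\int_\Sigma\gamma_\varepsilon'(s)h(s)\,d\Sigma$ and $\Phi_\varepsilon'(r)=-\int_\Sigma\gamma_\varepsilon'(s)h'(s)\,d\Sigma$ --- the identity becomes $\Psi_\varepsilon'(r)\ge k\Phi_\varepsilon(r)-\alpha\Psi_\varepsilon(r)$. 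Since $\gamma_\varepsilon$ is supported in $\{s\le r+\varepsilon\}$ and $h'/h$ is non-increasing, $\Phi_\varepsilon(r)\ge\frac{h'(r+\varepsilon)}{h(r+\varepsilon)}\Psi_\varepsilon(r)$, and substituting yields the clean ODE inequality
\[
\Psi_\varepsilon'(r)\ \ge\ \Bigl(k\,\frac{h'(r+\varepsilon)}{h(r+\varepsilon)}-\alpha\Bigr)\,\Psi_\varepsilon(r).
\]
Integrating $\frac{d}{dr}\log\Psi_\varepsilon$ from $r_0$ to $r$ (when $\Psi_\varepsilon(r_0)>0$; otherwise $\theta(r_0)=0$ and there is nothing to prove) and letting $\varepsilon\to0$, so $\Psi_\varepsilon(r)\to\theta(r)$ and $h(r+\varepsilon)\to h(r)$, shows $V_1(r)\ge V_1(r_0)$; hence $V_1$ is non-decreasing. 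For $V_2$ I would start from the same identity but bound its left side above using in addition $h(s)\le\frac{h(r+\varepsilon)}{h'(r+\varepsilon)}h'(s)$ for $s\le r+\varepsilon$, getting $-\int\gamma_\varepsilon'h|\n_\Sigma r|^2\,d\Sigma\le\frac{h(r+\varepsilon)}{h'(r+\varepsilon)}\Phi_\varepsilon'(r)$, and bound $\Psi_\varepsilon(r)\le\frac{h(r+\varepsilon)}{h'(r+\varepsilon)}\Phi_\varepsilon(r)$ by the same estimate; this converts the identity into $\Phi_\varepsilon'(r)\ge\bigl(k\frac{h'(r+\varepsilon)}{h(r+\varepsilon)}-\alpha\bigr)\Phi_\varepsilon(r)$, and the same integration gives that $V_2$ is non-decreasing.

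Once monotonicity is in hand the estimates are immediate: $V_1(r)\ge V_1(r_0)=C_1(r_0)$ and $V_2(r)\ge V_2(r_0)=C_2(r_0)$ for $r>r_0$. Since $h'>0$, $h$ is increasing, so $\theta(r)\le h(r)\,|\Sigma\cap B_r|$, and $\frac{e^{\alpha r}}{h(r)^{k}}\,h(r)\,|\Sigma\cap B_r|\ge C_1(r_0)$ gives (\ref{est.mono-1}). If $h'\le B$ then $\rho(r)\le B\,|\Sigma\cap B_r|$, giving (\ref{est.mono-2}); if $h''>0$ then $h'$ is increasing, so $\rho(r)\le h'(r)\,|\Sigma\cap B_r|$, giving (\ref{est.mono-3}).

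The hard part is producing the sharp constant $k\frac{h'}{h}-\alpha$ in the differential inequality. The conformal identity $\di_\Sigma(h\partial_r)=kh'(s)$ and the crude bound $|\n_\Sigma r|\le1$ must be used at exactly the right step --- the term $\int\gamma_\varepsilon'h|\n_\Sigma r|^2$ has to be compared with $\int\gamma_\varepsilon'h$, not estimated more finely --- after which the monotonicity of $h/h'$ lets one pull the factor $h'(r+\varepsilon)/h(r+\varepsilon)$ out of an integral over $\{s\le r+\varepsilon\}$. Redoing the same bookkeeping with the weight $h'(s)$ in place of $h(s)$ is what handles part (ii) with no extra hypothesis; trying instead to integrate $h'(s)\partial_r$ directly would introduce an $h''$ term in $\di_\Sigma$ and force the spurious assumption $h''\ge0$. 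The remaining points --- justifying the divergence theorem on a proper, possibly non-compact, $\Sigma$ and the passage $\varepsilon\to0$ --- are routine.
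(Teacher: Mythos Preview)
Your proof is correct and follows essentially the same route as the paper's: both apply the divergence theorem on $\Sigma$ to a compactly supported multiple of the conformal field $h(r)\partial_r$ (the paper uses its tangential part $h(r)\n_\Sigma r$, which is equivalent), obtain the identity
\[
-\int_\Sigma\gamma'\,h|\n_\Sigma r|^2\,d\Sigma=k\int_\Sigma\gamma\,h'\,d\Sigma+k\int_\Sigma\gamma\,h\,\langle\vec{H},\n r\rangle\,d\Sigma,
\]
bound $|\n_\Sigma r|^2\le1$ and $k|\vec{H}|\le\alpha$, and then use the monotonicity of $h/h'$ to pull the factor $h'(r)/h(r)$ (or its reciprocal) outside the integral. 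The only organizational difference is in item~(ii): the paper differentiates the quotient $h(R)^{-k}\int\lambda(R-r)h'\,d\Sigma$ first, substitutes the identity for $\int\lambda\,h'$, and then cancels terms, whereas you bound both sides of the identity by $\Phi_\varepsilon$ and $\Phi_\varepsilon'$ directly to land on the same differential inequality $\Phi_\varepsilon'\ge\bigl(k\,h'/h-\alpha\bigr)\Phi_\varepsilon$. Your packaging is a bit cleaner, but the underlying argument is the same.
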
 
In the next corollary we assume 
\begin{equation}\label{star}
\lan\vec{H},\n r\ran\geq 0.
\end{equation} 
This condition holds, for example, for minimal submanifolds ($\vec{H}=0$) or cones ($\lan\vec{H},\n r\ran=0$) in the warped product manifolds. If $k=n-1,$ we say that $\Sigma$ is a star-shaped hypersurface if there is a choice of unit normal $\eta$ of $\Sigma$ such that $\lan\eta,\n r\ran\geq0$. In this case, the condition (\ref{star}) means that $\Sigma$ is star-shaped.
\begin{corollary}\label{mono-cones}
Let $I\subset\R$ be an open interval and $N^{n-1},\ n\geq3,$ be a $(n-1)-$dimensional Riemannian manifold. Let $M^n=I\times N^{n-1}$ be endowed with the warped metric $ds^2=dr^2+h(r)^2g_N,$ where $g_N$ is the metric of $N^{n-1}$ and $h'(r)>0$ for all $r\in I.$ Assume also that $\dfrac{h(r)}{h'(r)}$ is non-decreasing for all $r\in I.$ If $\Sigma$ is a $k-$dimensional, proper, oriented, submanifold, $k\geq2,$ of $M^n$ such that $\lan\vec{H},\n r\ran\geq 0,$ then the functions 
\[
r\longmapsto \dfrac{1}{h(r)^k}\int_{\Sigma\cap B_r}\!\! h(s) d\Sigma\ \ \mbox{and}\ \ r\longmapsto \dfrac{1}{h(r)^k}\int_{\Sigma\cap B_r}\!\! h'(s) d\Sigma
\]
are monotone non-decreasing for all $r\in I.$ In particular,
\begin{equation}\label{est.mono-cones-1}
|\Sigma\cap B_r|\geq \widetilde{C}_1(r_0)h(r)^{k-1}.
\end{equation}
Moreover, if there exists $B>0$ such that $h'(r)\leq B$ for every $r\in I,$ then
\begin{equation}\label{est.mono-cones-2}
|\Sigma\cap B_r|\geq \frac{\widetilde{C}_2(r_0)}{B}h(r)^{k}
\end{equation}
and, if $h''(r)>0$ then
\[
|\Sigma\cap B_r|\geq \frac{\widetilde{C}_2(r_0)}{h'(r)}h(r)^{k}
\]
for every $r>r_0,$ where 
\[
\widetilde{C}_1(r_0)=\frac{1}{h(r_0)^k}\int_{\Sigma\cap B_{r_0}}h(s)d\Sigma\ \mbox{and}\ \widetilde{C}_2(r_0)=\frac{1}{h(r_0)^k}\int_{\Sigma\cap B_{r_0}}h'(s)d\Sigma.
\]
\end{corollary}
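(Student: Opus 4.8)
The plan is to retrace the proof of Theorem \ref{mono} with $\alpha=0$. One cannot simply invoke that theorem, since with $\alpha=0$ its hypothesis $k|\vec{H}|\leq\alpha$ forces $\vec{H}\equiv0$; the point is rather that in its proof the mean curvature enters only through the quantity $k\int_{\Sigma\cap B_r}h(s)\,\lan\vec{H},\n r\ran\,d\Sigma$ (and its $h'$-weighted analogue), which is bounded below by $-\alpha\int_{\Sigma\cap B_r}h(s)\,d\Sigma$ using $k|\vec{H}|\leq\alpha$ and $|\n r|=1$. Under the present hypothesis $\lan\vec{H},\n r\ran\geq0$ this quantity is already nonnegative, so the integrating factor $e^{\alpha r}$ becomes unnecessary.

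Concretely, I would set $f_1(r)=\int_{\Sigma\cap B_r}h(s)\,d\Sigma$, $f_2(r)=\int_{\Sigma\cap B_r}h'(s)\,d\Sigma$ and $\be(r)=\int_{\Sigma\cap\{s=r\}}|\n_\Sigma r|^{-1}\,d\mathcal{H}^{k-1}$; the coarea formula then gives $f_1'(r)=h(r)\be(r)$ and $f_2'(r)=h'(r)\be(r)$ for almost every $r$. Since $\hess_M r=\frac{h'(r)}{h(r)}(g-dr\otimes dr)$, the ambient vector field $X=h(r)\n r$ is the gradient of a function whose Hessian equals $h'(r)\,g$, so $\di_\Sigma X=k\,h'(r)$; applying the divergence theorem for submanifolds on $\Sigma\cap B_r$ — which contributes the correction $-k\int_{\Sigma\cap B_r}\lan\vec{H},X\ran\,d\Sigma$ and whose boundary, for regular $r$, is $\Sigma\cap\{s=r\}$ with outward conormal $\n_\Sigma r/|\n_\Sigma r|$ — yields
\[
h(r)\int_{\Sigma\cap\{s=r\}}|\n_\Sigma r|\,d\mathcal{H}^{k-1}=k\,f_2(r)+k\int_{\Sigma\cap B_r}h(s)\,\lan\vec{H},\n r\ran\,d\Sigma .
\]
Because $|\n_\Sigma r|\leq|\n r|=1$, the left-hand side is $\leq h(r)\be(r)$, while the last integral is $\geq0$ since $h>0$ and $\lan\vec{H},\n r\ran\geq0$; hence $h(r)\be(r)\geq kf_2(r)$, and multiplying $f_2'(r)=h'(r)\be(r)$ by $h(r)$ gives $h(r)f_2'(r)\geq kh'(r)f_2(r)$, i.e.\ $(f_2/h^k)'\geq0$. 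The same identity also gives $f_1'(r)=h(r)\be(r)\geq kf_2(r)$; and since $h/h'$ is non-decreasing, $h(r)h'(s)\geq h'(r)h(s)$ whenever $s\leq r$, so integrating over $\Sigma\cap B_r$ yields $h(r)f_2(r)\geq h'(r)f_1(r)$, whence $f_1'(r)\geq k\frac{h'(r)}{h(r)}f_1(r)$, i.e.\ $(f_1/h^k)'\geq0$. This is exactly the monotonicity asserted in the statement.

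The volume estimates would follow by elementary manipulations. From $(f_1/h^k)'\geq0$, for $r>r_0$ one gets $\int_{\Sigma\cap B_r}h(s)\,d\Sigma\geq\widetilde{C}_1(r_0)\,h(r)^k$; since $h'>0$ forces $h(s)\leq h(r)$ on $\Sigma\cap B_r$, the left-hand side is at most $h(r)\,|\Sigma\cap B_r|$, which gives (\ref{est.mono-cones-1}). Likewise $\int_{\Sigma\cap B_r}h'(s)\,d\Sigma\geq\widetilde{C}_2(r_0)\,h(r)^k$, and bounding $h'(s)\leq B$ on $\Sigma\cap B_r$ gives (\ref{est.mono-cones-2}), while if $h''>0$ then $h'$ is increasing, so $h'(s)\leq h'(r)$ on $\Sigma\cap B_r$ and the remaining estimate follows.

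I do not anticipate a genuine obstacle here: the only real content is the observation in the first paragraph isolating where $k|\vec{H}|\leq\alpha$ is used in Theorem \ref{mono} and noting that $\lan\vec{H},\n r\ran\geq0$ suffices there. Everything else is routine and already present in the proof of Theorem \ref{mono} — the almost-everywhere differentiability of $f_1$ and $f_2$, the use of regular values in the coarea formula, and the compactness of $\Sigma\cap B_r$ needed for the divergence theorem, which is where properness of $\Sigma$ is used.
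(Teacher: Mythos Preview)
Your proposal is correct and follows the same approach as the paper: the paper's proof is one line, citing the intermediate inequalities (\ref{mono-partial}) and (\ref{mono-partial2}) derived in the proof of Theorem~\ref{mono} before the hypothesis $k|\vec{H}|\leq\alpha$ is used, and observes that under $\lan\vec{H},\n r\ran\geq0$ their right-hand sides are nonnegative. Your re-derivation via the coarea formula in place of the smooth cutoff $\lambda(R-r)$ is a cosmetic difference, and your account of the volume estimates matches exactly what the paper does in Theorem~\ref{mono}.
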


As applications of Theorem \ref{mono} we have the following results:
\begin{corollary}\label{mono-SS}
Let $\Sigma$ be a $k-$dimensional, proper, oriented, submanifold, $k\geq 2,$ of the de Sitter-Schwarzschild manifold $M^n(c),\ n\geq 3,$ such that $k|\vec{H}|\leq \alpha$ for some $\alpha\geq0.$ Then, for every $r>r_0$ such that $h(r_0)>\left(\frac{mn}{2}\right)^{\frac{1}{n-2}},$ we have

\begin{itemize}
\item[(i)] for $c>0,$
\[
|\Sigma\cap B_r| \geq C_2(r_0)e^{-\alpha r}h(r)^k,
\]
where $\displaystyle{C_2(r_0)=\frac{e^{\alpha r_0}}{h(r_0)^k}\int_{\Sigma\cap B_{r_0}} h'(s) d\Sigma};$

\item[(ii)] for $c\leq 0,$
\[
|\Sigma\cap B_r| \geq C_1(r_0)e^{-\alpha r}h(r)^{k-1};
\]
where $\displaystyle{C_1(r_0)=\frac{e^{\alpha r_0}}{h(r_0)^k}\int_{\Sigma\cap B_{r_0}} h(s) d\Sigma}.$ Moreover, for $c<0,$
\begin{equation}\label{brendle-SS}
h(r)= \frac{1}{\sqrt{-c}}\sinh(\sqrt{-c}r)+\dfrac{m}{2n\sqrt{-c}}\sinh^{1-n}(\sqrt{-c}r) + O(\sinh^{-n-1}(\sqrt{-c}r)).
\end{equation} 
In particular, if $c<0,$ $\Sigma$ is complete, non compact, and $\alpha< k-1,$ then $\Sigma$ has at least exponential volume growth at infinity and $|\Sigma|=\infty.$
\end{itemize}
\end{corollary}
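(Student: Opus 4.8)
The plan is to verify that the two structural hypotheses of Theorem \ref{mono}, namely $h'>0$ and the monotonicity of $h/h'$, hold on the region $\{r:\ h(r)\ge(mn/2)^{1/(n-2)}\}$, and then to specialize the estimates of that theorem. Differentiating the identity $(h')^{2}=1-mh^{2-n}-ch^{2}$ from (\ref{defi-SS}) and dividing by $h'>0$ gives $h''=\tfrac{m(n-2)}{2}h^{1-n}-ch$, hence
\[
(h')^{2}-h\,h''=1-\tfrac{mn}{2}\,h^{2-n}.
\]
Since $(h/h')'=\bigl((h')^{2}-hh''\bigr)/(h')^{2}$, the quotient $h/h'$ is non-decreasing exactly where $h(r)^{n-2}\ge mn/2$, i.e.\ where $h(r)\ge(mn/2)^{1/(n-2)}$; on that set $mh^{2-n}<1$, so $(h')^{2}\ge 1-mh^{2-n}>0$ when $c\le 0$, while for $c>0$ the inequality $h'>0$ holds on all of $(s_{0},s_{1})$. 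Because the monotonicity of $V_{1},V_{2}$ in Theorem \ref{mono} and the volume estimates derived from it rest on pointwise inequalities (the pointwise differential inequality for $V_{i}$, together with $h(s)\le h(r)$ for $s\le r$), it follows that whenever $h(r_{0})>(mn/2)^{1/(n-2)}$ all of the conclusions of Theorem \ref{mono} remain valid for $r>r_{0}$, with $C_{1}(r_{0})=V_{1}(r_{0})$ and $C_{2}(r_{0})=V_{2}(r_{0})$ as in the statement.

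With this in hand, part (ii) for $c\le 0$ is precisely (\ref{est.mono-1}). For part (i) with $c>0$, we note that on $(s_{0},s_{1})$ one has $(h')^{2}=1-mh^{2-n}-ch^{2}<1$ since $m,c>0$ and $h>0$; hence $h'\le 1$, and (\ref{est.mono-2}) applied with $B=1$ gives $|\Sigma\cap B_{r}|\ge C_{2}(r_{0})e^{-\alpha r}h(r)^{k}$. (Estimate (\ref{est.mono-3}) is unavailable here, since $h''=\tfrac{m(n-2)}{2}h^{1-n}-ch$ becomes negative for large $h$.)

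It remains to establish the asymptotic expansion (\ref{brendle-SS}) for $c<0$. Set $\lambda=\sqrt{-c}$, so that the second-order form of (\ref{defi-SS}) reads $h''-\lambda^{2}h=\tfrac{m(n-2)}{2}h^{1-n}$; since $h$ is the increasing, surjective inverse of $F\colon[s_{0},\infty)\to[0,\infty)$, we have $h(r)\to\infty$, so the right-hand side decays exponentially. Writing $\psi(r)=h(r)e^{-\lambda r}$ one computes $(e^{2\lambda r}\psi')'=\tfrac{m(n-2)}{2}h^{1-n}e^{\lambda r}=O\!\left(e^{(2-n)\lambda r}\right)$; integrating twice from $+\infty$ and feeding the crude bound $h(r)\sim(\mathrm{const})\,e^{\lambda r}$ back into the integrals yields the two leading terms of (\ref{brendle-SS}) with an $O(\sinh^{-n-1}(\lambda r))$ remainder. (The leading term $\tfrac{1}{\lambda}\sinh(\lambda r)$ is the warping function of $\h^{n}(c)$, recovered as $m\to 0$.) This perturbative ODE analysis, with uniform control of the error, is the step I expect to be the most delicate; alternatively the expansion may be quoted from \cite{Brendle}.

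Finally, for the last assertion: if $c<0$ and $\Sigma$ is complete and noncompact, then, since $\Sigma$ is proper and $\bigcup_{r}B_{r}=M^{n}(c)$, the sets $\Sigma\cap B_{r}$ exhaust $\Sigma$ and $|\Sigma\cap B_{r}|\to|\Sigma|$ as $r\to\infty$. By part (ii), $|\Sigma\cap B_{r}|\ge C_{1}(r_{0})e^{-\alpha r}h(r)^{k-1}$, and by (\ref{brendle-SS}) the factor $h(r)^{k-1}$ grows like a positive exponential in $r$; under the hypothesis $\alpha<k-1$ this outweighs $e^{\alpha r}$, so $|\Sigma\cap B_{r}|\to\infty$ and in fact grows at least exponentially in $r$, whence $|\Sigma|=\infty$.
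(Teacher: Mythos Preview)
Your proof is correct and follows essentially the same route as the paper's: verify that $h/h'$ is non-decreasing once $h(r)>(mn/2)^{1/(n-2)}$ via the computation $(h')^{2}-hh''=1-\tfrac{mn}{2}h^{2-n}$, then invoke estimate (\ref{est.mono-1}) for $c\le 0$ and estimate (\ref{est.mono-2}) with $B=1$ (using $(h')^{2}<1$) for $c>0$. The paper's own proof is in fact terser on two points where you add content: it does not justify the asymptotic expansion (\ref{brendle-SS}) at all, simply quoting it (the subsequent remark attributes the $c=-1$ case to Brendle), whereas you sketch an ODE argument for it; and it does not spell out why the restriction to $r>r_{0}$ suffices for Theorem \ref{mono}, which you at least flag.
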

\begin{remark}
{\normalfont
The equation (\ref{brendle-SS}), with $c=-1,$ was proved by S. Brendle, see \cite{Brendle-2}, Lemma 2.1, p.128.
}
\end{remark}
For submanifolds of the Reissner-Nordstrom manifold, we have the
\begin{corollary}\label{mono-RN}
Let $\Sigma$ be a $k-$dimensional, proper, oriented, submanifold, $k\geq2,$ of the Reissner-Nordstrom manifold $M^n=(s_0,\infty)\times\s^{n-1}, \ n\geq3,$ such that $k|\vec{H}|\leq \alpha$ for some $\alpha\geq0.$ Then, for every $r>r_0$ such that $h(r_0)>s_2,$ we have
\[
|\Sigma\cap B_r| \geq C_2(r_0)e^{-\alpha r}h(r)^k,
\]
where $s_2=\left(\frac{4q^2(n-1)}{mn - \sqrt{m^2n^2 - 16q^2(n-1)}}\right)^{\frac{1}{n-2}},$ $C_2(r_0)=\displaystyle{\frac{e^{\alpha r_0}}{h(r_0)^k}\int_{\Sigma\cap B_{r_0}} h'(s) d\Sigma}.$ Moreover, if $n\geq4,$ then
\[
h(r) = r + \frac{m}{2(n-3)}r^{3-n}+ O(r^{5-2n}).
\]
In particular, if $n\geq4$ and $\Sigma$ is a complete minimal submanifold, then the volume of $\Sigma$ has at least polynomial growth of order $k$ at infinity and $|\Sigma|=\infty.$

\end{corollary}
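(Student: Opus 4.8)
\emph{Proof proposal.}\ The plan is to obtain the volume lower bound as a special case of Theorem~\ref{mono}(ii), after checking its structural hypotheses for the warping function $h$ of the Reissner--Nordstrom manifold on the range where $h(r)>s_2$, and then to extract the asymptotics of $h$ directly from~(\ref{defi-RN}).

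First I would verify the hypotheses of Theorem~\ref{mono}. From~(\ref{defi-RN}) one has $(h')^2=1-mh^{2-n}+q^2h^{4-2n}$. Since $h>s_0$ and $n\geq3$, and since $s_0$ is the larger zero of $s\mapsto 1-ms^{2-n}+q^2s^{4-2n}$, we get $0<h^{2-n}<s_0^{2-n}=\frac{m-\sqrt{m^2-4q^2}}{2q^2}<\frac{m}{2q^2}$; in particular $q^2h^{2-n}<m$, hence $0<h'<1$. Differentiating the identity for $(h')^2$ gives
\[
h''=\frac{n-2}{2}\,h^{1-n}\bigl(m-2q^2h^{2-n}\bigr)>0,
\]
so $h$ is convex on the relevant interval; and a short computation yields
\[
(h')^2-hh''=1-\frac{mn}{2}h^{2-n}+(n-1)q^2h^{4-2n},
\]
i.e.\ $(h')^2-hh''=Q\bigl(h^{2-n}\bigr)$ with $Q(v)=(n-1)q^2v^2-\frac{mn}{2}v+1$; thus $(h/h')'=\bigl((h')^2-hh''\bigr)/(h')^2$ has the sign of $Q(h^{2-n})$. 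The discriminant of $Q$ is positive, because $m>2q$ gives $m^2n^2>4q^2n^2$ and $4n^2\geq16(n-1)$ is the inequality $(n-2)^2\geq0$; the smaller root of $Q$ is exactly $s_2^{2-n}$; and one checks that $s_0^{2-n}$ lies between the two roots of $Q$. Hence $Q(h^{2-n})\geq0$ precisely when $h^{2-n}\leq s_2^{2-n}$, i.e.\ when $h\geq s_2$, so $h/h'$ is nondecreasing on $\{r:h(r)\geq s_2\}$.

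Next I would apply Theorem~\ref{mono}(ii) on the subinterval $\{r:h(r)>s_2\}$, where all its hypotheses now hold; since its monotonicity is a consequence of a pointwise differential inequality for $V_2$, it follows that $V_2(r)\geq V_2(r_0)$ whenever $h(r_0)>s_2$ and $r>r_0$, and combining this with $0<h'<1$ (so one may take $B=1$ in~(\ref{est.mono-2}), or use $h''>0$ in~(\ref{est.mono-3}) together with $h'<1$) yields $|\Sigma\cap B_r|\geq C_2(r_0)e^{-\alpha r}h(r)^k$, with $C_2(r_0)$ as in Theorem~\ref{mono}(ii). For the expansion of $h$ when $n\geq4$: $h$ is increasing and the radicand in~(\ref{defi-RN}) stays bounded away from $0$ once $h$ is large, so $h(r)\to\infty$, $h'(r)\to1$, and $h(r)/r\to1$; expanding the square root gives $h'(r)=1-\frac m2 h(r)^{2-n}+O(h(r)^{4-2n})$, and substituting $h(t)=t(1+o(1))$ and integrating (the tails $\int^{\infty}t^{2-n}\,dt$ and $\int^{\infty}t^{4-2n}\,dt$ converge for $n\geq4$), followed by one bootstrap step, gives $h(r)=r+\frac{m}{2(n-3)}r^{3-n}+O(r^{5-2n})$.

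Finally, for a complete minimal submanifold one has $\alpha=0$ and $\vec H=0$, so the estimate reads $|\Sigma\cap B_r|\geq C_2(r_0)h(r)^k$; moreover $r|_\Sigma$ is strictly subharmonic, since $\hess r=\frac{h'}{h}(g-dr\otimes dr)$ and $h'>0$ give $\Delta_\Sigma r=\frac{h'}{h}\bigl(k-|\n_\Sigma r|^2\bigr)>0$, so $\Sigma$ is noncompact and (being proper) $r$ is unbounded on $\Sigma$; in particular $\Sigma$ meets $\{h>s_2\}$, so $C_2(r_0)>0$ for suitable large $r_0$. Since $\Sigma=\bigcup_r(\Sigma\cap B_r)$ and $h(r)\sim r\to\infty$, we conclude $|\Sigma\cap B_r|\geq C_2(r_0)r^k(1+o(1))$, i.e.\ at least polynomial volume growth of order $k$, and letting $r\to\infty$ gives $|\Sigma|=\sup_r|\Sigma\cap B_r|=\infty$. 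I expect the main obstacle to be the second step: recognizing that $s_2$ is exactly the threshold beyond which $(h/h')'\geq0$, which forces the explicit form of $h''$ and the discriminant/root analysis of $Q$; a secondary point is to invoke Theorem~\ref{mono} only on the sub-range $\{h>s_2\}$ where its hypotheses genuinely hold, and to carry the asymptotic bootstrap to the stated order.
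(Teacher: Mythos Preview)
Your proposal is correct and follows essentially the same route as the paper: verify $h/h'$ is nondecreasing for $h>s_2$ (the paper simply cites its proof of Theorem~\ref{iso-RN} for this root analysis, which you reproduce directly), check $0<h'<1$ everywhere, and invoke Theorem~\ref{mono}(ii) with $B=1$. The only presentational difference is in the asymptotics: the paper (Lemma~\ref{asymp-RN-1}) writes the inverse explicitly as $r(s)=s-\int_s^\infty\bigl((1-mt^{2-n}+q^2t^{4-2n})^{-1/2}-1\bigr)\,dt$ and expands, while you expand $h'$ and bootstrap---equivalent arguments; you also supply the justification (subharmonicity of $r|_\Sigma$, hence unboundedness of $r$ and $C_2(r_0)>0$) for the final clause, which the paper leaves implicit.
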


Another interesting application of Theorem \ref{mono} is for warped manifolds $I\times\s^{n-1}$ where $I=(0,b)$ or $I=(0,\infty)$ which warping function satisfies $h(0)=0$ and $h'(0)=1.$ 

\begin{corollary}\label{mono-space}
Let $I\subset\R$ be an open interval of the form $(0,b)$ or $(0,\infty),$ $N^{n-1},\ n\geq3,$ be a Riemannian manifold and let $M^n=I\times N^{n-1}$ endowed with the metric $ds^2=dr^2 + h(r)^2g_N,$ such that $h(0)=0, \ h'(0)=1$ and $h'(r)>0$ for every $r>0.$ Assume also that $\dfrac{h(r)}{h'(r)}$ is non-decreasing for all $r\in I.$ If $\Sigma$ is a $k-$dimensional, proper, oriented, submanifold of $M^n$ such that its mean curvature vector satisfies $k|\vec{H}|\leq \alpha$ for some $\alpha\geq0,$ then 
\[
\int_{\Sigma\cap B_r(x_0)} h'(s)d\Sigma \geq \omega_ke^{-\alpha r}h(r)^k,
\]
for all $B_r(x_0)\subset M^n$ such that $x_0\in\Sigma,$ where $\omega_k$ is the volume of the $k$-dimensional Euclidean unit round ball. In particular, if there exists $B>0$ such that $h'(r)\leq B$ for every $r\in I,$ then 
\begin{equation}\label{mono-space-1}
|\Sigma\cap B_r| \geq \frac{\omega_k}{B}e^{-\alpha r}h(r)^k,
\end{equation}
and if $h''(r)>0$ then
\begin{equation}\label{mono-space-2}
|\Sigma\cap B_r| \geq \omega_k\frac{e^{-\alpha r}h(r)^k}{h'(r)}.
\end{equation}
\end{corollary}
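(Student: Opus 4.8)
The plan is to deduce the inequality from the monotonicity of the function $V_2$ provided by Theorem~\ref{mono}(ii), and then to identify the value of $V_2$ at the pole $r=0$. Since $h'(r)>0$ on $I$ and $h(r)/h'(r)$ is non-decreasing, the hypotheses of Theorem~\ref{mono} are satisfied; and since $h(0)=0$, the coordinate $r$ is the distance function from the pole $o=\{0\}\times N^{n-1}$, so the ball $B_r(x_0)$ with centre $x_0=o\in\Sigma$ is exactly $B_r=\{s\in I\,|\,s\le r\}\times N^{n-1}$. By Theorem~\ref{mono}(ii) the function
\[
V_2(r)=\frac{e^{\alpha r}}{h(r)^k}\int_{\Sigma\cap B_r}h'(s)\,d\Sigma
\]
is monotone non-decreasing on $I$; being non-negative it has a limit as $r\to 0^+$, and for every $r$ with $B_r\subset M^n$ one gets $V_2(r)\ge\lim_{\rho\to 0^+}V_2(\rho)$. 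Hence the claimed inequality $\int_{\Sigma\cap B_r}h'(s)\,d\Sigma\ge\omega_k e^{-\alpha r}h(r)^k$ is equivalent to $\lim_{\rho\to 0^+}V_2(\rho)\ge\omega_k$.

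To evaluate this limit I would use the normalizations $h(0)=0$ and $h'(0)=1$, which make the warped metric close up smoothly at $o$ and Euclidean to second order in geodesic normal coordinates there; in particular $h(\rho)=\rho\bigl(1+O(\rho)\bigr)$ and $h'(s)=1+o(1)$ uniformly on $B_\rho$ as $\rho\to 0^+$. Since $x_0=o$ lies on the smooth $k$-dimensional submanifold $\Sigma$, the tangent space $T_{x_0}\Sigma$ is a $k$-plane and the $k$-dimensional density of $\Sigma$ at $x_0$ is $1$, that is, $|\Sigma\cap B_\rho|=\omega_k\rho^k\bigl(1+o(1)\bigr)$. Together with $e^{\alpha\rho}\to 1$ this gives
\[
\lim_{\rho\to 0^+}V_2(\rho)=\lim_{\rho\to 0^+}\frac{e^{\alpha\rho}}{h(\rho)^k}\int_{\Sigma\cap B_\rho}h'(s)\,d\Sigma=\lim_{\rho\to 0^+}\frac{|\Sigma\cap B_\rho|}{\rho^k}=\omega_k,
\]
which establishes the main inequality. (When $M^n=\R^n$, $h(r)=r$, $\alpha=0$, this is the classical density bound $|\Sigma\cap B_r(0)|\ge\omega_k r^k$ for minimal submanifolds.)

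For the two particular cases I would only use pointwise bounds on $h'$ over $B_r=\{s\le r\}\times N^{n-1}$. If $h'\le B$ on $I$, then $\int_{\Sigma\cap B_r}h'(s)\,d\Sigma\le B\,|\Sigma\cap B_r|$; and if $h''>0$, then $h'$ is increasing, so $h'(s)\le h'(r)$ on $B_r$ and $\int_{\Sigma\cap B_r}h'(s)\,d\Sigma\le h'(r)\,|\Sigma\cap B_r|$. Inserting the main inequality into these two estimates yields (\ref{mono-space-1}) and (\ref{mono-space-2}) respectively.

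The only genuinely delicate point is the identification $\lim_{\rho\to 0^+}V_2(\rho)=\omega_k$: it rests on $h(0)=0,\ h'(0)=1$ making the pole a Euclidean-like point, on the standard fact that a $C^1$ $k$-submanifold has density $1$ at each of its points, and on checking that the perturbations $h(\rho)\ne\rho$ and $h'\ne 1$ on $B_\rho$ only contribute a factor $1+o(1)$. Everything else is a direct application of Theorem~\ref{mono}(ii) together with elementary inequalities.
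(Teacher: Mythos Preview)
Your proposal is correct and follows essentially the same approach as the paper: apply the monotonicity of $V_2$ from Theorem~\ref{mono}(ii), then use $h(0)=0$, $h'(0)=1$ together with the fact that a smooth $k$-submanifold has Euclidean density $1$ at each of its points to compute $\lim_{\rho\to 0^+}V_2(\rho)=\omega_k$. Your write-up is in fact somewhat more explicit than the paper's about why the density limit $\frac{1}{\omega_k\rho^k}\int_{\Sigma\cap B_\rho}h'(s)\,d\Sigma\to 1$ holds, and your derivation of (\ref{mono-space-1}) and (\ref{mono-space-2}) from the main inequality via the pointwise bounds on $h'$ matches the paper's reasoning exactly.
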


\begin{remark}
{\normalfont
Clearly the space forms $\R^n, \ \h^n(c),$ and the open hemisphere $\s_+^n(c)$ satisfy the hypothesis of Theorem \ref{mono} and Corollary \ref{mono-space}. Other classes of manifolds satisfying these hypothesis are given in the Example \ref{cylindrical} and the Example \ref{ex-log} in the Appendix.
}
\end{remark}

When the submanifolds have dimension $2,$ we obtain other type of isoperimetric inequality, namely:

\begin{theorem}\label{Theo.2}
Let $I\subset\R$ be an open interval of the form $(0,b)$ or $(0,\infty),$ $N^{n-1},\ n\geq3,$ be a Riemannian manifold and let $M^n=I\times N^{n-1}$ endowed with the metric $ds^2=dr^2 + h(r)^2g_N,$ such that $h(0)=0, \ h'(0)=1$ and $h'(r)>0$ for every $r>0.$ Let $\Sigma^2\subset[r_0,r_1]\times N^{n-1}\subset M^n, \ [r_0,r_1]\subset I,$ be a compact minimal surface with non-empty boundary $\partial\Sigma.$
\begin{itemize}
\item[(i)] If the function $u(r):=r+\dfrac{h(r)}{h'(r)}$ is non-decreasing for $r\in[r_0,r_1],$ then
\[
2\pi A \leq L^2 + \dfrac{A}{n-1}\int_\Sigma \ric_M(\n r)d\Sigma;
\]
\item[(ii)] If the function $u(r):=r+\dfrac{h(r)}{h'(r)}$ is non-increasing for $r\in[r_0,r_1]$ and the scalar curvature of $N$ satisfies $\scal_N\geq0,$ then
\[
2\pi A \leq L^2 + \dfrac{2A}{(n-1)(n-2)}\int_\Sigma (\scal_M - 2\ric_M(\n r))d\Sigma,
\]
\end{itemize} 
where $A = \area(\Sigma),\ L=\lenght(\partial\Sigma),$ $\scal_M$ denotes the scalar curvature of $M,$ and $\ric_M(\n r)$ denotes the Ricci curvature of $M$ in the radial direction $\n r.$
\end{theorem}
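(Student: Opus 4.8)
My plan is to combine the Gauss equation for the minimal surface $\Sigma^2$ with the Gauss--Bonnet theorem, feeding the warping function in through the Hessian of the radial coordinate $r$ and a coarea argument on the level sets of $r|_\Sigma$. First I would record the pointwise formulas: in $M=I\times_h N$ one has $\hess r=\frac{h'(r)}{h(r)}(g-dr\otimes dr)$, so tracing over a minimal $\Sigma^2$ gives
\[
\Delta_\Sigma r=\frac{h'(r)}{h(r)}\bigl(2-|\n_\Sigma r|^2\bigr)>0,\qquad \Delta_\Sigma(\varphi\circ r)=\varphi'\,\frac{h'}{h}\bigl(2-|\n_\Sigma r|^2\bigr)+\varphi''\,|\n_\Sigma r|^2 .
\]
In particular $r|_\Sigma$ is subharmonic, so $\max_\Sigma r$ is attained on $\partial\Sigma$; the choice $H'=h$ gives the divergence identity $\di_\Sigma\bigl(h(r)\n_\Sigma r\bigr)=2h'(r)$, and the choice $\varphi'=1/h$ gives $\Delta_\Sigma(\varphi\circ r)=\frac{2h'}{h^2}\bigl(1-|\n_\Sigma r|^2\bigr)\ge0$, i.e.\ a radial subharmonic function on $\Sigma$ playing the role of a Green's function. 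I would also note that $u'(r)=\bigl(2(h')^2-hh''\bigr)/(h')^2$, so the monotonicity hypothesis on $u$ is precisely a sign condition on $2(h')^2-hh''$, and this is what will make the $r$-dependent coefficients in the estimates line up.

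For the curvature side, the Gauss equation and minimality give $K_\Sigma=K_M(T_p\Sigma)-\tfrac12|II|^2\le K_M(T_p\Sigma)$, and splitting $\n r$ into its parts tangent and normal to $\Sigma$ and using the warped-product sectional curvatures (a plane containing $\partial_r$ has curvature $-h''/h$, a horizontal plane has curvature $(K_N-(h')^2)/h^2$ with $K_N$ the corresponding sectional curvature of $N$) yields
\[
K_M(T_p\Sigma)=\frac{|\n_\Sigma r|^2}{n-1}\,\ric_M(\n r)+\bigl(1-|\n_\Sigma r|^2\bigr)\,\frac{K_N-(h')^2}{h^2},
\]
where $\ric_M(\n r)=-(n-1)h''/h$. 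For part (ii) I would instead work with the scalar form, rewriting the $N$-contribution via the identity $\scal_M-2\ric_M(\n r)=h^{-2}\bigl(\scal_N-(n-1)(n-2)(h')^2\bigr)$ and invoking $\scal_N\ge0$, so that the curvature of $N$ enters only with a favorable sign.

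I would then apply the Gauss--Bonnet formula to the sublevel sets $\Sigma_t=\{x\in\Sigma:r(x)<t\}$, whose boundary consists of the level curves $\Gamma_t=\{r=t\}$ (with geodesic curvature $\di_\Sigma(\n_\Sigma r/|\n_\Sigma r|)$, computable from the formulas of the first paragraph) together with a portion of $\partial\Sigma$, combine the resulting identities as $t$ varies, and use $\chi(\Sigma)\le1$ (valid since $\partial\Sigma\neq\emptyset$). Combining this with the coarea formula, which gives $a'(t)=\int_{\Gamma_t}|\n_\Sigma r|^{-1}\,ds\ge\ell(t)$ for $a(t)=\area(\Sigma_t)$ and $\ell(t)$ the length of $\Gamma_t$, together with a Cauchy--Schwarz estimate on $\Gamma_t$ and the divergence identity $\di_\Sigma(h(r)\n_\Sigma r)=2h'(r)$, should upgrade the Gauss--Bonnet boundary contribution into the quadratic term $L^2$, while the monotonicity of $u$ forces the radial curvature term to collect precisely into $\frac{A}{n-1}\int_\Sigma\ric_M(\n r)\,d\Sigma$ (respectively the $\scal_M$-expression in case (ii)).

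The main obstacle is this surface-isoperimetric step: extracting an honest $L^2$ rather than merely a multiple of $L$ from the coarea/Cauchy--Schwarz argument on the level curves of $r$, and matching it with the topological bound $\chi(\Sigma)\le1$, is a Fiala--Hartman type analysis of a surface whose curvature is controlled only by a variable bound. The single most delicate point, in part (i), is disposing of the term $\bigl(1-|\n_\Sigma r|^2\bigr)(K_N-(h')^2)/h^2$: since no sign is available for the sectional curvature of $N$, it cannot be bounded pointwise, and one must instead integrate it against the weight produced by the subharmonic radial function of the first paragraph and absorb it using minimality together with the inequality $u'\ge0$ (i.e.\ $2(h')^2\ge hh''$) --- this is exactly where that hypothesis is used. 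In part (ii) the monotonicity of $u$ is the opposite one, and one controls the curvature pointwise through $\scal_M-2\ric_M(\n r)$ and $\scal_N\ge0$ instead.
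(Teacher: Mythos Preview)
Your Gauss--Bonnet/Gauss-equation approach is genuinely different from the paper's, and in part (i) it has a real gap. The term $(1-|\n_\Sigma r|^2)\,(K_N-(h')^2)/h^2$ that you correctly isolate contains the sectional curvature $K_N$ of the fiber $N$ on the horizontal $2$-plane determined by $T_p\Sigma$. No hypothesis on $K_N$ is made in part (i), and your suggestion to ``absorb'' it by pairing with the subharmonic radial function $\varphi\circ r$ (with $\varphi'=1/h$) cannot work: $K_N$ depends both on the base point and on the specific tangent direction of $\Sigma$ inside the slice, and there is no integration-by-parts identity relating it to the Laplacian of a purely radial function. The condition $u'\ge0$, i.e.\ $2(h')^2\ge hh''$, is a constraint on $h$ alone and carries no information about $K_N$; so this is not, as you suggest, ``exactly where that hypothesis is used.'' A second structural difficulty is that you work throughout with the fixed warped-product coordinate $r$, which has no singularity on $\Sigma\subset[r_0,r_1]\times N$; there is then no source for the constant $2\pi$, and the level-set/Fiala--Hartman machinery does not obviously produce the quadratic term $L^2$ either.

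The paper avoids both issues by a potential-theoretic argument with a \emph{moving pole}, in the style of Choe--Gulliver, and never invokes Gauss--Bonnet or the intrinsic curvature of $\Sigma$. For each $p\in\Sigma$ one sets $r=\dist_M(\cdot,p)$ and computes directly, using only the Laplacian formula of Lemma~\ref{lemma1}, that
\[
\Delta_\Sigma\log h(r)
=2\Bigl(\tfrac{h'}{h}\Bigr)^{2}-|\n_\Sigma r|^2\Bigl(2\bigl(\tfrac{h'}{h}\bigr)^{2}-\tfrac{h''}{h}\Bigr)
\ge\tfrac{h''}{h}=-\tfrac{1}{n-1}\,\ric_M(\n r),
\]
the last step being precisely where $u'\ge0$ (equivalently $2(h'/h)^2\ge h''/h$) is used to replace $|\n_\Sigma r|^2$ by $1$. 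Crucially, this lower bound involves only the radial Ricci curvature and \emph{never} $K_N$. Integrating over $\Sigma\setminus B(p,t)$ and letting $t\to0$ extracts $-2\pi$ from the logarithmic singularity of $\log h(r)$ at $p$, leaving the boundary term $\int_{\partial\Sigma}(h'/h)\,\partial_\nu r\,dS$. One then integrates over $p\in\Sigma$ and swaps the order of integration; for each fixed boundary point the inner integral $\int_\Sigma (h'/h)$ is bounded via $h'/h\le\Delta_\Sigma r$ and the divergence theorem by $\int_{\partial\Sigma}\partial_\nu r\le L$, producing $\int_{\partial\Sigma}L\,dS=L^2$. Part (ii) is handled the same way: the reversed inequality $u'\le0$ gives instead $\Delta_\Sigma\log h(r)\ge 2(h'/h)^2$, and the hypothesis $\scal_N\ge0$ converts this into the stated bound in terms of $\scal_M-2\ric_M(\n r)$.
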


\begin{remark}
{\normalfont
The function $u(r)=r+\dfrac{h(r)}{h'(r)}$ in the hypothesis of Theorem \ref{Theo.2} deserves some comments about where it is non-decreasing or non-increasing:

\begin{itemize}
\item[(a)] Since $u'(0)=2,$ there exists an interval $[0,s]\subset I$ such that $u'(r)>0;$

\item[(b)] There are manifolds where $u'(r)>0$ everywhere, as we can see in the space forms $\R^n,\ \s^n(c),$ $\h^n(c),$ the Example \ref{cylindrical}, and the Example \ref{ex-log};

\item[(c)] If $u'(r)<0$ somewhere, this happens only for compact intervals. In fact, if $u'(r)<0$ for $r>r_0,$ then $0<u(r)<u(r_0)$ implies $r<r_0 +\frac{h(r_0)}{h'(r_0)}.$ A typical case is the Example \ref{ex-theo.2-1} of the Appendix, where there exist $r_0$ and $r_1$ such that $u'(r)>0$ for $r\in(0,r_0)\cup (r_1,\infty)$ and $u'(r)<0$ for $r\in(r_0,r_1).$

\end{itemize}
}
\end{remark}

\begin{remark}
{\normalfont The hypothesis of Theorem \ref{Theo.2} are not satisfied by the de Sitter-Schwarzschild manifold nor by the Reissner-Nordstrom manifold.
}
\end{remark}

As immediate consequences of the item (i) of Theorem \ref{Theo.2}, we obtain the isoperimetric inequalities of J. Choe and R. Gulliver, see Theorem 5, p. 183, of \cite{CG-Manusc}:

\begin{corollary}\label{W-CG}
Let $\Sigma^2$ be compact minimal surface of $\s^n(c)$ or $\h^n(c), \ n\geq3.$ If $\Sigma^2\subset\s^n(c)$ assume further that $\diam\Sigma\leq\frac{\pi}{2\sqrt{c}}.$ Let $A=\area(\Sigma)$ and $L=\lenght(\partial\Sigma).$ Then
\[
2\pi A\leq L^2 + cA^2.
\]
\end{corollary}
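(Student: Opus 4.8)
The plan is to obtain both inequalities as a direct application of part (i) of Theorem \ref{Theo.2}, after realizing $\s^n(c)$ and $\h^n(c)$ as warped products of the required type and evaluating the Ricci term explicitly.

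First I would fix the warped product structure using Example \ref{space-forms}, taking $N^{n-1}=\s^{n-1}$ to be the round unit sphere. Then $\h^n(c)=I\times\s^{n-1}$ with $h(r)=\frac{1}{\sqrt{-c}}\sinh(\sqrt{-c}\,r)$ and $I=(0,\infty)$, while the open hemisphere $\s^n_+(c)=I\times\s^{n-1}$ with $h(r)=\frac{1}{\sqrt{c}}\sin(\sqrt{c}\,r)$ and $I=(0,\tfrac{\pi}{2\sqrt{c}})$; here we stop at $\tfrac{\pi}{2\sqrt{c}}$ precisely so that $h'(r)=\cos(\sqrt{c}\,r)>0$ on $I$. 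In both cases $h(0)=0$, $h'(0)=1$ and $h'>0$ on $I$, so the standing hypotheses of Theorem \ref{Theo.2} are met, $r$ being the distance to the pole (the point obtained by collapsing $\{0\}\times\s^{n-1}$).

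Next I would place $\Sigma$ inside a slab $[r_0,r_1]\times\s^{n-1}$ with $[r_0,r_1]\subset I$. Since $\Sigma$ is a compact $2$-dimensional subset of an $n$-manifold with $n\ge3$, it has measure zero, so I may choose the pole $q$ off $\Sigma$; then $r_0:=\dist(q,\Sigma)>0$ and $r_1:=\max_\Sigma\dist(q,\cdot)<\infty$ by compactness, whence $\Sigma\subset[r_0,r_1]\times\s^{n-1}$. In the hyperbolic case $[r_0,r_1]\subset I=(0,\infty)$ automatically. In the spherical case, choosing $q$ at small distance $\delta>0$ from a fixed $p\in\Sigma$ and using $\diam\Sigma<\tfrac{\pi}{2\sqrt{c}}$ gives $r_1\le\delta+\diam\Sigma<\tfrac{\pi}{2\sqrt{c}}$ for $\delta$ small, so again $[r_0,r_1]\subset I$; the limiting case $\diam\Sigma=\tfrac{\pi}{2\sqrt{c}}$ follows by a routine approximation. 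It then remains to check the hypothesis of item (i) and compute the Ricci term: since $\frac{h(r)}{h'(r)}=\frac{1}{\sqrt{-c}}\tanh(\sqrt{-c}\,r)$ (hyperbolic) and $\frac{1}{\sqrt{c}}\tan(\sqrt{c}\,r)$ (spherical), the function $u(r)=r+\frac{h(r)}{h'(r)}$ satisfies $u'(r)=1+\cosh^{-2}(\sqrt{-c}\,r)>0$, resp. $u'(r)=1+\cos^{-2}(\sqrt{c}\,r)>0$ on $I$, so $u$ is non-decreasing on $[r_0,r_1]$; and since $\h^n(c)$ and $\s^n(c)$ have constant sectional curvature $c$, one has $\ric_M=(n-1)c\,g$, hence $\ric_M(\n r)=(n-1)c$ (as $|\n r|=1$) and $\int_\Sigma\ric_M(\n r)\,d\Sigma=(n-1)cA$.

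Substituting this into the inequality of Theorem \ref{Theo.2}(i) gives
\[
2\pi A\le L^2+\frac{A}{n-1}\int_\Sigma\ric_M(\n r)\,d\Sigma=L^2+\frac{A}{n-1}(n-1)cA=L^2+cA^2,
\]
which is the assertion. I do not anticipate a genuine obstacle here, since this is essentially a substitution into an already-proved theorem; the only point requiring attention is the geometric bookkeeping in the third step — keeping the pole off $\Sigma$ and, for $\s^n(c)$, keeping $\Sigma$ strictly inside an open hemisphere so that $[r_0,r_1]\subset I$, together with the harmless limiting argument when $\diam\Sigma=\tfrac{\pi}{2\sqrt{c}}$.
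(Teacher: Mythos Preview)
Your proposal is correct and follows the same route as the paper: apply item (i) of Theorem \ref{Theo.2} to the space forms and substitute $\ric_M(\n r)=(n-1)c$, which collapses the right-hand side to $L^2+cA^2$. One small remark: in the paper's proof of Theorem \ref{Theo.2} the pole $p$ is taken \emph{on} $\Sigma$ (the $-2\pi$ arises from the logarithmic singularity of $\log h(r)$ at $p$), so your care in placing the pole off $\Sigma$ and the limiting argument for $\diam\Sigma=\tfrac{\pi}{2\sqrt{c}}$ are unnecessary --- the diameter hypothesis in the spherical case is exactly what guarantees that every $p\in\Sigma$ can serve as the center of an open hemisphere containing $\Sigma$.
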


\section{Proof of the Main Results}\label{Main}

We start with a well known result, which we give a proof here for the sake of completeness. The proof presented here is essentially in \cite{Brendle}, Lemma 2.2, p. 253.

\begin{lemma}\label{lemma1}
Let $M^n=I\times N^{n-1},\ n\geq3,$ be a warped product manifold with metric $g=dr^2+h(r)^2g_N,$ where $g_N$ is the metric of $N^{n-1}$ and $r$ is the distance function of $M^n.$ Then
\begin{equation}\label{hessian}
\hess r(U,V) = \frac{h'(r)}{h(r)}[\lan U,V\ran - \lan \n r,U\ran \lan \n r,V \ran],
\end{equation}
for all $U,V\in TM.$ Moreover, if $\Sigma$ is a $k-$dimensional submanifold of $M^n,$ then 
\begin{equation}\label{hessian-2}
\Delta_\Sigma r = \dfrac{h'(r)}{h(r)}[k-|\n_\Sigma r|^2] + k\lan\vec{H},\n r\ran.
\end{equation}
Here $\lan U,V\ran = g(U,V),$ $\n r$ is the gradient of $r$ in $M^n,$ $\n_\Sigma r$ and $\Delta_\Sigma r$ denote the gradient and the Laplacian of $r$ in $\Sigma,$ respectively. 
\end{lemma}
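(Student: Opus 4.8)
The plan is to compute the Hessian of the distance function $r$ on the warped product $M^n = I \times N^{n-1}$ directly from the Levi-Civita connection of the metric $g = dr^2 + h(r)^2 g_N$, and then trace over a submanifold. First I would recall the standard warped-product connection formulas: if $\partial_r = \n r$ denotes the unit radial field and $X, Y$ are vectors tangent to the fibers $N^{n-1}$, then $\n^M_{\partial_r}\partial_r = 0$ (the $r$-lines are geodesics), $\n^M_X \partial_r = \n^M_{\partial_r} X = \frac{h'(r)}{h(r)} X$, and $\n^M_X Y = \n^N_X Y - h(r)h'(r)\,g_N(X,Y)\,\partial_r$. From these, $\hess r(U,V) = \lan \n^M_U \n r, V\ran$ is computed by decomposing $U = \lan U,\n r\ran \n r + U^\top$ and $V = \lan V,\n r\ran \n r + V^\top$ into radial and fiber parts: the purely radial contribution vanishes, the mixed terms vanish, and the fiber–fiber term gives $\frac{h'(r)}{h(r)}\, g_N$-pairing, which, since $h(r)^2 g_N(U^\top, V^\top) = \lan U^\top, V^\top\ran = \lan U,V\ran - \lan\n r,U\ran\lan\n r,V\ran$, produces exactly \eqref{hessian}.

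Next, to obtain \eqref{hessian-2}, I would use the standard relation between the intrinsic Laplacian of $r|_\Sigma$ and the ambient Hessian: for a $k$-dimensional submanifold with orthonormal tangent frame $\{e_i\}_{i=1}^k$,
\[
\Delta_\Sigma r = \sum_{i=1}^k \hess r(e_i, e_i) + k\lan \vec H, \n r\ran,
\]
where the correction term comes from the second fundamental form of $\Sigma$ in $M$ and $\vec H$ is the normalized mean curvature vector. Substituting \eqref{hessian} into the sum gives $\sum_i \hess r(e_i,e_i) = \frac{h'(r)}{h(r)}\sum_i \big[\lan e_i,e_i\ran - \lan\n r,e_i\ran^2\big] = \frac{h'(r)}{h(r)}\big[k - |\n_\Sigma r|^2\big]$, since $\sum_i \lan\n r, e_i\ran^2 = |\n_\Sigma r|^2$ is precisely the squared norm of the tangential part of $\n r$. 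Combining yields \eqref{hessian-2}.

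I do not expect a serious obstacle here; the result is classical and the only points requiring care are bookkeeping ones: getting the warped-product Christoffel symbols right (the factor $h'(r)/h(r)$ versus $h(r)h'(r)$ depending on whether one pairs with $g_N$ or with $g$), and stating the trace formula $\Delta_\Sigma r = \sum_i \hess r(e_i,e_i) + k\lan\vec H,\n r\ran$ with the correct sign and normalization convention for $\vec H$ (here $\vec H$ is the \emph{normalized} mean curvature vector, i.e. the trace of the second fundamental form divided by $k$). The mildly delicate step is simply verifying that the ambient-Hessian-restricted-to-$\Sigma$ identity holds for functions whose gradient is not tangent to $\Sigma$, which follows from writing $\Delta_\Sigma(r|_\Sigma) = \di_\Sigma(\n_\Sigma r)$ and using the Gauss formula $\n^M_{e_i} \n r = \n^\Sigma_{e_i}\n_\Sigma r + (\text{normal part})$, the normal part contracting against $e_i$ through the second fundamental form to produce $k\lan\vec H,\n r\ran$.
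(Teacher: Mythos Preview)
Your proposal is correct and complete. The derivation of \eqref{hessian-2} from \eqref{hessian} is essentially identical to the paper's: both trace the identity $\hess_\Sigma r(e_i,e_i) = \hess r(e_i,e_i) + \langle II(e_i,e_i),\n r\rangle$ over an orthonormal frame of $\Sigma$.

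For \eqref{hessian} itself, however, you take a different route. You compute $\hess r$ by quoting the warped-product Levi--Civita connection formulas (in particular $\n^M_X\partial_r = \tfrac{h'}{h}X$ for fiber-tangent $X$) and decomposing $U,V$ into radial and fiber parts. The paper instead uses the Lie derivative: it computes $\mathcal{L}_{\partial_r} g = 2h'(r)h(r)\,g_N$ directly from the warped metric, identifies $\mathcal{L}_{\partial_r} g$ with $2\hess r$ via the Killing-type identity $(\mathcal{L}_{\partial_r} g)(U,V) = \langle \n_U\partial_r,V\rangle + \langle U,\n_V\partial_r\rangle$, and then rewrites $g_N = h^{-2}(g - dr^2)$. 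Your approach is more hands-on and requires having the connection coefficients at hand; the paper's Lie-derivative argument is a bit slicker in that it bypasses the full connection and exploits only that $g_N$ is $r$-independent. Both are standard and equally rigorous; the bookkeeping concerns you flag (normalization of $\vec H$, the $h'/h$ versus $hh'$ factor) are exactly the right things to watch, and you have them correct.
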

\begin{proof}

Taking the Lie derivative of $g$ in the direction of $\partial_r = \n r,$ we have
\[
\begin{split}
\mathcal{L}_{\partial_r}(g)&=\mathcal{L}_{\partial_r}(dr\otimes dr) + \mathcal{L}_{\partial_r}(h(r)^2g_N)\\
&= 2 \mathcal{L}_{\partial_r}(dr)\otimes dr + \mathcal{L}_{\partial_r}(h(r)^2)g_N\\
&= 2d(\mathcal{L}_{\partial_r} r)\otimes dr + \lan\n(h(r)^2),\partial_r\ran g_N\\
&= 2 d(\lan\n r,\partial_r\ran)\otimes dr + 2h'(r)h(r)g_N\\
&= 2h'(r)h(r)g_N,\\
\end{split}
\]
where $\mathcal{L}_{\partial_r} g_N=0$ provided $g_N$ does not depends on $r.$ On the other hand,
\[
\begin{split}
(\mathcal{L}_{\partial_r}(g))(U,V) &= \lan U,\n_V \partial_r\ran + \lan\n_U\partial_r,V\ran\\
                                      &= 2\hess r(U,V).
\end{split}
\]
Since $g_N = \dfrac{1}{h(r)^2}(g - dr^2),$ we have
\[
\begin{split}
\hess r(U,V)&=\frac{h'(r)}{h(r)}[g(U,V) - dr^2(U,V)]\\
            &=\frac{h'(r)}{h(r)}[\lan U,V\ran - \lan U,\partial_r\ran \lan V,\partial_r\ran].
\end{split}
\]
The expression (\ref{hessian-2}) follows by tracing the known identity
\[
\begin{split}
\hess_\Sigma r(U,V) &= \hess r(U,V)+ \lan II(U,V),\n r\ran\\
                    &=\frac{h'(r)}{h(r)}[\lan U,V\ran - \lan \n r,U\ran \lan \n r,V \ran] + \lan II(U,V),\n r\ran
\end{split}
\]
over $\Sigma,$ where $\hess_\Sigma r$ denotes the Hessian of $r$ in $\Sigma$ and $II(U,V)$ is the second fundamental form of $\Sigma.$
\end{proof}

The next proposition will give the fundamental inequalities for the proof of Theorem \ref{iso-SS1} and Theorem \ref{iso-RN}.

\begin{proposition}\label{Teo-1}
Let $M^n=I\times N^{n-1}, n\geq 3,$ be a warped product manifold, with the warping function $h:I\ria\R$ satisfying $h(r)>0$ and $h'(r)\neq 0$ for all $r\in I$. If $\Sigma$ is a $k-$dimensional, compact, oriented submanifold of $M^n,$ possibly with boundary, then
\begin{equation}\label{eq-Teo-1-1}
\begin{split}
\int_\Sigma f d\Sigma &=\! \dfrac{1}{k}\left[\int_{\partial\Sigma} f\dfrac{h(r)}{h'(r)}\lan\n_\Sigma r,\nu\ran dS_\Sigma\! +\! \int_\Sigma \left(\lan -k\vec{H}, \n r\ran f - \lan \n_\Sigma f, \n_\Sigma r\ran\right) \frac{h(r)}{h'(r)}d\Sigma\right]\\
&\qquad \qquad -\frac{1}{k(n-1)}\int_\Sigma f\ric_M(\n r)\left(\frac{h(r)}{h'(r)}\right)^2|\n_\Sigma r|^2 d\Sigma,
\end{split}
\end{equation} 
for every non-negative smooth function $f:\Sigma\ria\R,$ where $\vec{H}$ denotes the mean curvature vector field of $\Sigma,$ $\nu$ is the unitary conormal vector field of $\partial\Sigma$ pointing outward, $\n_\Sigma r$ denotes the gradient of $r$ in $\Sigma$ and $\n r$ denotes the gradient of $r$ in $M^n.$ In particular, for $f\equiv 1$ and $h'(r)>0,$ $r\in I,$ we have
\begin{equation}\label{eq-Teo-1-2}
\begin{split}
|\Sigma|&\leq \frac{1}{k}\left[\int_{\partial\Sigma} \frac{h(r)}{h'(r)}dS_\Sigma + \int_\Sigma \lan -k\vec{H}, \n r\ran \frac{h(r)}{h'(r)}d\Sigma\right]\\
&\qquad -\dfrac{1}{k(n-1)}\int_\Sigma \ric_M(\n r)\left(\frac{h(r)}{h'(r)}\right)^2|\n_\Sigma r|^2 d\Sigma.
\end{split}
\end{equation} 
Moreover, if $\Sigma=\{r\}\times N^{n-1}$ is a slice, or $\Sigma$ is a totally geodesic submanifold of dimension $k,$ then the equality in (\ref{eq-Teo-1-2}) holds.
\end{proposition}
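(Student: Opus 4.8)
The plan is to integrate the identity (\ref{hessian-2}) for the Laplacian of $r$ on $\Sigma$ against a suitable test function and apply the divergence theorem on $\Sigma$. First I would multiply (\ref{hessian-2}) by $f\cdot\frac{h(r)}{h'(r)}$ — the factor $\frac{h(r)}{h'(r)}$ being chosen precisely to cancel the coefficient $\frac{h'(r)}{h(r)}$ in front of the $k-|\n_\Sigma r|^2$ term — and rewrite the left side $f\frac{h(r)}{h'(r)}\Delta_\Sigma r$ as a divergence plus a gradient-correction term: $\di_\Sigma\!\left(f\frac{h(r)}{h'(r)}\n_\Sigma r\right) - \lan\n_\Sigma\!\left(f\frac{h(r)}{h'(r)}\right),\n_\Sigma r\ran$. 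Here one must expand $\n_\Sigma r\!\left(\frac{h(r)}{h'(r)}\right) = \left(1 - \frac{h(r)h''(r)}{h'(r)^2}\right)|\n_\Sigma r|^2$, which is exactly the quantity that, via the radial Ricci curvature identity for warped products $\ric_M(\n r) = -(n-1)\frac{h''(r)}{h(r)}$, produces the term $\frac{1}{n-1}\ric_M(\n r)\left(\frac{h(r)}{h'(r)}\right)^2|\n_\Sigma r|^2$ after regrouping. Integrating over $\Sigma$ and using $\int_\Sigma \di_\Sigma X\, d\Sigma = \int_{\partial\Sigma}\lan X,\nu\ran\, dS_\Sigma$ gives (\ref{eq-Teo-1-1}) directly, after dividing by $k$ and moving the $k\lan\vec H,\n r\ran$ term across.

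The key algebraic step is the bookkeeping: starting from
\[
f\frac{h(r)}{h'(r)}\Delta_\Sigma r = f(k - |\n_\Sigma r|^2) + kf\frac{h(r)}{h'(r)}\lan\vec H,\n r\ran,
\]
so that $\int_\Sigma f\, d\Sigma = \frac1k\int_\Sigma f|\n_\Sigma r|^2 d\Sigma + \frac1k\int_\Sigma f\frac{h(r)}{h'(r)}\Delta_\Sigma r\, d\Sigma - \int_\Sigma f\frac{h(r)}{h'(r)}\lan\vec H,\n r\ran d\Sigma$. The middle integral becomes, after the divergence manipulation above, $\frac1k\int_{\partial\Sigma} f\frac{h(r)}{h'(r)}\lan\n_\Sigma r,\nu\ran dS_\Sigma - \frac1k\int_\Sigma\lan\n_\Sigma f,\n_\Sigma r\ran\frac{h(r)}{h'(r)}d\Sigma - \frac1k\int_\Sigma f\left(1 - \frac{h(r)h''(r)}{h'(r)^2}\right)|\n_\Sigma r|^2 d\Sigma$. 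The first two pieces are the desired boundary and gradient terms. For the last piece I combine $\frac1k\int_\Sigma f|\n_\Sigma r|^2 - \frac1k\int_\Sigma f|\n_\Sigma r|^2 + \frac1k\int_\Sigma f\frac{h(r)h''(r)}{h'(r)^2}|\n_\Sigma r|^2$: the first two cancel, leaving $\frac1k\int_\Sigma f\frac{h''(r)h(r)}{h'(r)^2}|\n_\Sigma r|^2 d\Sigma$, and substituting $\frac{h''(r)}{h(r)} = -\frac{1}{n-1}\ric_M(\n r)$ converts this to $-\frac{1}{k(n-1)}\int_\Sigma f\,\ric_M(\n r)\left(\frac{h(r)}{h'(r)}\right)^2|\n_\Sigma r|^2 d\Sigma$, which is exactly the last term of (\ref{eq-Teo-1-1}).

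For the particular case $f\equiv 1$: the gradient term $\lan\n_\Sigma f,\n_\Sigma r\ran$ vanishes, on $\partial\Sigma$ we have $|\lan\n_\Sigma r,\nu\ran|\leq |\n_\Sigma r|\leq 1$ and $\frac{h(r)}{h'(r)}>0$, so $\int_{\partial\Sigma}\frac{h(r)}{h'(r)}\lan\n_\Sigma r,\nu\ran dS_\Sigma \leq \int_{\partial\Sigma}\frac{h(r)}{h'(r)}dS_\Sigma$, and $\lan -k\vec H,\n r\ran\frac{h(r)}{h'(r)} \leq k|\vec H|\frac{h(r)}{h'(r)}$ pointwise; wait — in (\ref{eq-Teo-1-2}) the mean curvature term is kept as $\lan-k\vec H,\n r\ran$, so in fact only the boundary estimate is used to pass from equality to inequality. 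This yields (\ref{eq-Teo-1-2}). For the equality discussion: if $\Sigma=\{r\}\times N^{n-1}$ is a slice, then $r$ is constant on $\Sigma$, hence $\n_\Sigma r = 0$, and $\Delta_\Sigma r = 0$ forces $k\lan\vec H,\n r\ran = -k\frac{h'(r)}{h(r)}$ wait that is not zero — rather one checks directly that both sides of (\ref{eq-Teo-1-2}) equal $|\Sigma|$ since $\partial\Sigma=\emptyset$, $|\n_\Sigma r|=0$ kills the Ricci term, and the mean curvature term on a slice satisfies $\lan-k\vec H,\n r\ran\frac{h(r)}{h'(r)} = $ wait, I need $\vec H$ of a slice. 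Actually the cleanest argument: on a slice $|\n_\Sigma r|\equiv 0$ so $r$ is constant, the conormal integral and Ricci integral vanish, and I verify $\int_\Sigma\lan-k\vec H,\n r\ran\frac{h(r)}{h'(r)}d\Sigma = k|\Sigma|$ by computing that a slice has mean curvature vector $\vec H = -\frac{h'(r)}{h(r)}\n r$. The totally geodesic case is similar: then $\vec H=0$ and one needs the trace of the second fundamental form in the normal direction to vanish, so (\ref{hessian-2}) plus $II\equiv 0$ pins down the identity. The main obstacle is purely organizational — keeping the signs straight through the $|\n_\Sigma r|^2$ cancellations and correctly invoking the warped-product Ricci formula $\ric_M(\n r) = -(n-1)h''/h$; there is no analytic difficulty since $\Sigma$ is compact and everything is smooth.
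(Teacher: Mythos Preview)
Your proof is correct and follows essentially the same approach as the paper: both compute the divergence of the vector field $f\dfrac{h(r)}{h'(r)}\n_\Sigma r$ on $\Sigma$, apply the divergence theorem, and invoke the warped-product identity $\ric_M(\n r)=-(n-1)h''/h$. The only organizational difference is that the paper first packages the Laplacian identity as $\di_\Sigma(h(r)\n_\Sigma r)=kh'(r)+kh(r)\lan\vec H,\n r\ran$ (via a potential $u$ with $u'=h$) and then multiplies by $f/h'$, which sidesteps the $|\n_\Sigma r|^2$ cancellation you carry out explicitly; the equality cases are handled identically, with $\vec H=-\tfrac{h'}{h}\n r$ on a slice and $\n_\Sigma r=\n r=\nu$, $\vec H=0$ on a totally geodesic $\Sigma$.
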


\begin{proof}
Let $u:I\ria \R$ be a real function such that $u'(r)=h(r).$ By tracing the expression
\[
\hess_\Sigma u(r)(U,V) = \hess_M u(r)(U,V) + \lan II(U,V),\n u(r)\ran
\]
over $\Sigma,$ we have
\[
\Delta_\Sigma u(r) = \sum_{i=1}^k \hess_M u(r)(e_i,e_i) + k\lan\vec{H},\n u(r)\ran
\]
for any orthonormal frame $\{e_1,e_2,\ldots,e_k\}$ of $\Sigma.$ On the other hand, since
\[
\Delta_\Sigma u(r)  = \di_\Sigma(\n_\Sigma u(r))= \di_\Sigma( h(r)\n_\Sigma r)
\]
and, by using Lemma \ref{lemma1},
\[
\begin{split}
\sum_{i=1}^k \hess_M u(r)(e_i,e_i)&=\sum_{i=1}^k \lan\n_{e_i}\n u(r),e_i\ran = \sum_{i=1}^k\lan\n_{e_i}(h(r)\n r),e_i\ran\\
                               &=h'(r)|\n_\Sigma r|^2 + h(r)\sum_{i=1}^k \hess_M r (e_i,e_i)\\
                               &=h'(r)|\n_\Sigma r|^2 + h(r)\sum_{i=1}^k \dfrac{h'(r)}{h(r)}[\lan e_i,e_i\ran - \lan e_i,\n r\ran^2]\\
                               &=kh'(r),
\end{split}
\]
we obtain
\begin{equation}\label{eq-div}
\di_\Sigma( h(r)\n_\Sigma r) = kh'(r) + kh(r)\lan\vec{H},\n r\ran.
\end{equation}
Let $f:\Sigma\ria\R$ be a smooth function. By using the equation (\ref{eq-div}), we have
\[
\begin{split}
\di_\Sigma\left(\frac{f}{h'(r)}h(r)\n_\Sigma r\right) &= \left\lan\n_\Sigma \left(\frac{f}{h'(r)}\right), h(r)\n_\Sigma r\right\ran + \frac{f}{h'(r)}\di_\Sigma(h(r)\n_\Sigma r)\\
                          &=\frac{h(r)}{h'(r)}\lan\n_\Sigma f, \n_\Sigma r \ran - f\frac{h(r)h''(r)}{h'(r)^2}|\n_\Sigma r|^2\\
                          &\qquad + kf + kf\frac{h(r)}{h'(r)}\lan \vec{H},\n r \ran\\
                          &=\frac{h(r)}{h'(r)}\lan\n_\Sigma f, \n_\Sigma r \ran - f\frac{h''(r)}{h(r)}\left(\frac{h(r)}{h'(r)}\right)^2|\n_\Sigma r|^2\\
                          &\qquad + kf + kf\frac{h(r)}{h'(r)}\lan \vec{H},\n r \ran.\\
\end{split}
\]
Integrating the expression above over $\Sigma$ and using the divergence theorem, we have 
\begin{equation}\label{proof-prop-1}
\begin{split}
k\int_\Sigma f d\Sigma &=\int_\Sigma \di_\Sigma\left(f\frac{h(r)}{h'(r)}\n_\Sigma r\right)d\Sigma -\int_\Sigma \frac{h(r)}{h'(r)} \lan \n_\Sigma f,\n_\Sigma r\ran d\Sigma\\
                        &\qquad + \int_\Sigma f\frac{h''(r)}{h(r)}\left(\frac{h(r)}{h'(r)}\right)^2|\n_\Sigma r|^2 d\Sigma - k\int_\Sigma f\frac{h(r)}{h'(r)} \lan \vec{H},\n r\ran d\Sigma \\
                        & =\int_{\partial\Sigma} f\frac{h(r)}{h'(r)}\lan \n_\Sigma r, \nu\ran dS_\Sigma -\int_\Sigma \frac{h(r)}{h'(r)} \lan \n_\Sigma f,\n_\Sigma r\ran d\Sigma\\
                        &\qquad + \int_\Sigma f\frac{h''(r)}{h(r)}\left(\frac{h(r)}{h'(r)}\right)^2|\n_\Sigma r|^2 d\Sigma - k\int_\Sigma f\frac{h(r)}{h'(r)} \lan \vec{H},\n r\ran d\Sigma,\\
\end{split}
\end{equation}
where $\nu$ is the unitary conormal vector field of $\partial\Sigma$ pointing outward. The identity (\ref{eq-Teo-1-1}) follows from (\ref{proof-prop-1}) by noting that
\[
\ric_M(\n r) = -(n-1)\dfrac{h''(r)}{h(r)}.
\]
The inequality (\ref{eq-Teo-1-2}) follows considering $f\equiv 1$ and observing that
\[
\lan \n_\Sigma r, \nu\ran \leq |\n_\Sigma r||\nu| \leq |\n r||\nu|=1.
\]
To see the cases of the equality in the inequality (\ref{eq-Teo-1-2}), notice that, if $\Sigma=\{r_0\}\times N^{n-1}$ is a slice, then $\partial\Sigma =\emptyset,$ $\n_\Sigma r =0,$ $h(r)=h(r_0), \ h'(r)=h'(r_0)$ and $\vec{H}=-\dfrac{h'(r_0)}{h(r_0)}\n r.$ If $\Sigma$ is totally geodesic, then $\n_\Sigma r=\n r=\nu$ and $\vec{H}=0.$ This implies that the inequalities in the proof of Proposition \ref{Teo-1} become equalities.
\end{proof}

\begin{example}
{\normalfont
Even in the simplest case of surfaces in $\R^3$ we can find classes of examples different from the slices (i.e., the round spheres centered in the origin) which satisfy the equality in the inequality (\ref{eq-Teo-1-2}) of Proposition \ref{Teo-1}. In fact, consider the right cones with central angle $2\alpha$ parametrized in spherical coordinates by $F^\alpha:(0,2\pi)\times (0,R)\ria \R^3,$
\[
F^\alpha(\theta,r)=(r\sin\alpha\cos\theta,r\sin\alpha\sin\theta,r\cos\alpha).
\]
If $\Sigma^\alpha=F^\alpha((0,2\pi)\times (0,R)),$ then 
\[
|\Sigma^\alpha|=\int_0^{2\pi}\int_0^R\|F^\alpha_r\times F^\alpha_\theta\|dr\ d\theta = \pi R^2\sin\alpha,
\]
$|\partial\Sigma^\alpha|=2\pi R\sin\alpha,$ $\lan\vec{H},\n r\ran=0,$ and, since $\partial\Sigma^\alpha$ lies in the sphere of radius $R$ we have
\[
|\Sigma^\alpha|=\pi R^2\sin\alpha =\dfrac{R}{2}|\partial\Sigma^\alpha|= \frac{1}{2}\int_{\partial\Sigma^\alpha} r dS_{\Sigma^\alpha} = \frac{1}{2}\int_{\partial\Sigma^\alpha} \dfrac{h(r)}{h'(r)} dS_{\Sigma^\alpha},
\]
which is the equality in the inequality (\ref{eq-Teo-1-2}), since $\R^3$ is Ricci flat.
}
\end{example}

When $\Sigma$ is compact without boundary, the identity (\ref{eq-div}) in the proof of Proposition \ref{Teo-1} gives rise to the following Hsiung-Minkowski type identity:

\begin{corollary}\label{cor-mink}
Let $M^n=I\times N^{n-1}, n\geq 3,$ be a warped product manifold, with the warping function $h:I\ria\R$ satisfying $h(r)>0$ and $h'(r)>0$ for all $r\in I$. If $\Sigma$ is a $k-$dimensional, compact, without boundary, oriented submanifold of $M^n,$ then
\begin{equation}\label{eq-mink}
\int_{\Sigma} [h'(r) + h(r)\lan\vec{H},\n r\ran]d\Sigma =0.
\end{equation}
In particular, there is no compact, without boundary, minimal submanifolds in $M^n.$
\end{corollary}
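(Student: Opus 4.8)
The plan is to integrate the pointwise divergence identity (\ref{eq-div}) from the proof of Proposition \ref{Teo-1} over the closed submanifold $\Sigma$. Recall that (\ref{eq-div}) states
\[
\di_\Sigma\big(h(r)\n_\Sigma r\big) = kh'(r) + kh(r)\lan\vec{H},\n r\ran,
\]
and its derivation used only Lemma \ref{lemma1} together with the Gauss decomposition of the Hessian, so it holds verbatim under the present hypotheses. Since $\Sigma$ is compact with $\partial\Sigma=\emptyset$, the divergence theorem gives $\int_\Sigma \di_\Sigma(h(r)\n_\Sigma r)\,d\Sigma = 0$; substituting (\ref{eq-div}) into this and dividing by $k$ yields exactly (\ref{eq-mink}).

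For the last assertion I argue by contradiction: suppose $\Sigma$ is a compact, boundaryless, minimal submanifold of $M^n$, so that $\vec{H}\equiv 0$. Then (\ref{eq-mink}) reduces to $\int_\Sigma h'(r)\,d\Sigma = 0$. But the hypothesis $h'>0$ on all of $I$ forces the integrand to be strictly positive along $\Sigma$, and $|\Sigma|>0$ since $\Sigma$ is a nonempty compact submanifold of positive dimension; hence $\int_\Sigma h'(r)\,d\Sigma>0$, a contradiction.

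There is essentially no obstacle here. The only point requiring a little care is that (\ref{eq-div}) is recorded inside the proof of Proposition \ref{Teo-1} rather than as a numbered statement, so in writing the proof one should either cite that line or re-derive it in one step by tracing the identity $\hess_\Sigma u(r) = \hess_M u(r) + \lan II(\cdot,\cdot),\n u(r)\ran$ with $u'=h$ and applying Lemma \ref{lemma1}. One may also note that the identity (\ref{eq-mink}) is the boundaryless, curvature-free shadow of (\ref{eq-Teo-1-1}), obtained before the substitution $f\mapsto f/h'(r)$ that introduces the Ricci term.
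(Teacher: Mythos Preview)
Your proof is correct and follows exactly the approach indicated in the paper: the paper does not give a separate proof of this corollary but simply remarks that it follows from the identity (\ref{eq-div}) in the proof of Proposition \ref{Teo-1}, which is precisely what you do. Your treatment of the non-existence of closed minimal submanifolds via $h'>0$ is the intended conclusion.
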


\begin{remark}
{\normalfont
The identity (\ref{eq-mink}) can be compared with the known Hsiung-Minkow-\\ski inequalities of \cite{hsiung}, \cite{reilly} and \cite{heintze} for compact hypersurfaces in the space forms $\R^n,$ $\s^n(c)$ and $\h^n(c).$
}
\end{remark}

\begin{remark}
{\normalfont
It is a classical result that there is no compact, without boundary, minimal surfaces in $\R^3.$ This result was generalized by S. Myers in \cite{myers}, who proved the non-existence of compact, without boundary, minimal hypersurfaces in simply connected Riemannian manifolds of non positive sectional curvature and in the open hemisphere of $\s^n(c).$ For complete, non-compact, Riemannian manifolds of positive sectional curvature, the non-existence result was proved by K. Shiorama, see \cite{shiorama}, and for Riemannian manifolds of sectional curvature bounded above by a constant, the non-existence of compact, without boundary, minimal submanifolds was proved by J. Lu and M. Tanaka, see \cite{LT}.
}
\end{remark}

In the following, we prove Theorem \ref{iso-SS1} and Theorem \ref{iso-RN}.

\begin{proof}[Proof of Theorem \ref{iso-SS1}.] We start with the expression (\ref{defi-SS}). This expression is equivalent to
\begin{equation}\label{defi-SS2}
h'(r)^2 = 1-mh(r)^{2-n} - ch(r)^2.
\end{equation}
By taking implicit derivatives in (\ref{defi-SS2}), we have
\[
2h'(r)h''(r) = m(n-2)h(r)^{1-n}h'(r) - 2ch(r)h'(r).
\]
Therefore,
\begin{equation}\label{SS3}
-\dfrac{1}{(n-1)}\ric_{M^n(c)}(\n r) = \frac{h''(r)}{h(r)} = \dfrac{m(n-2)}{2h(r)^n} - c,
\end{equation}
since $h'(r)>0.$ Replacing the estimate
\[
\lan-\vec{H},\n r\ran \leq |\vec{H}||\n r| = |\vec{H}|,
\]
in the isoperimetric inequality (\ref{eq-Teo-1-2}), we obtain
\begin{equation}\label{iso-SS-prov}
\begin{split}
|\Sigma|&\leq \frac{1}{k}\left[\int_{\partial\Sigma}\dfrac{h(r)}{h'(r)}d\Sigma + k\int_\Sigma |\vec{H}|\dfrac{h(r)}{h'(r)}d\Sigma\right]\\
&\qquad -\frac{1}{k(n-1)} \int_\Sigma\ric_{M^n(c)}(\n r)\left(\frac{h(r)}{h'(r)}\right)^2|\n_\Sigma r|^2d\Sigma.
\end{split}
\end{equation}
In order to conclude the proof of Theorem \ref{iso-SS1}, we need to estimate the function $\dfrac{h(r)}{h'(r)},$ and for that we will analyse when $\dfrac{h(r)}{h'(r)}$ is increasing or decreasing. By using (\ref{defi-SS2}) and (\ref{SS3}), we have
\begin{equation}\label{SS4}
\begin{split}
\dfrac{d}{dr}\left(\dfrac{h(r)}{h'(r)}\right)&=\frac{h'(r)^2 - h(r)h''(r)}{h'(r)^2}\\
                                       &=\dfrac{1-mh(r)^{2-n} - ch(r)^2 - \frac{m(n-2)}{2}h(r)^{2-n}+ch(r)^2}{h'(r)^2}\\
									   &=\dfrac{1-\frac{mn}{2}h(r)^{2-n}}{h'(r)^2}.\\
\end{split}
\end{equation}

This implies that $\dfrac{h(r)}{h'(r)}$ is decreasing for $h(r)\in \left(s_0,\left(\frac{mn}{2}\right)^{\frac{1}{n-2}}\right)$ and that $\dfrac{h(r)}{h'(r)}$ is increasing for $h(r)\in \left(\left(\frac{mn}{2}\right)^{\frac{1}{n-2}},s_1\right).$

To estimate the third integral of (\ref{iso-SS-prov}) we need to know the sign of the Ricci curvature. By using (\ref{SS3}), if $c\leq 0,$ then $-\frac{1}{n-1}\ric_{M^n(c)}(\n r)>0$ everywhere in $(s_0,\infty)$, and if $c>0,$ then it happens for $h(r)<\left(\frac{m(n-2)}{2c}\right)^{\frac{1}{n}}.$ Notice also that the condition $\frac{n^n}{4(n-2)^{n-2}}m^2c^{n-2}<1$ is equivalent to $\left(\frac{m(n-2)}{2c}\right)^{\frac{1}{n}}>\left(\frac{mn}{2}\right)^{\frac{1}{n-2}},$  i.e., $\left(\frac{m(n-2)}{2c}\right)^{\frac{1}{n}}\in \left(\left(\frac{mn}{2}\right)^{\frac{1}{n-2}},s_1\right).$

 Let 
\[
d_\Sigma = \min\{s\in (s_0,s_1)| \Sigma\cap\{\{s\}\times\s^{n-1}\}\neq\emptyset\}
\] 
and 
\[
R_\Sigma = \max\{s\in (s_0,s_1)| \Sigma\cap\{\{s\}\times\s^{n-1}\}\neq\emptyset\}.
\]
Since $\dfrac{h(r)}{h'(r)}$ is decreasing for $s=h(r)\in \left(s_0,\left(\frac{mn}{2}\right)^{\frac{1}{n-2}}\right),$ we have
\begin{equation}\label{est-SS1-1}
\frac{h(r)}{h'(r)}=\dfrac{s}{\sqrt{1-ms^{2-n}-cs^2}}\leq \frac{d_\Sigma}{\sqrt{1-md_\Sigma^{2-n}-cd_\Sigma^2}}
\end{equation}
for $s\!\in\! \left(\!s_0,\!\left(\frac{mn}{2}\right)^{\frac{1}{n-2}}\!\right).$ Since $-\frac{1}{n-1}\ric_{M^n(c)}(\n r)>0$ for $s\!=\!h(r)\!\in\!\left(\!s_0,\!\left(\frac{mn}{2}\right)^{\frac{1}{n-2}}\!\right)\!,$ by using (\ref{est-SS1-1}) into (\ref{iso-SS-prov}), we obtain the inequality (\ref{iso-SS1-d}).

In order to prove (\ref{iso-SS1-mod-3}), notice that
\[
-\frac{1}{n-1}\ric_{M^n(c)}(\n r)\left(\dfrac{h(r)}{h'(r)}\right)^2 = \left(\frac{m(n-2)}{2h(r)^n}-c\right)\frac{h(r)^2}{1-mh(r)^{2-n}-ch(r)^2}.
\]
Let $f:(s_0,s_1)\ria\R$ defined by
\begin{equation}\label{f}
f(t)=\left(\frac{m(n-2)}{2t^n}-c\right)\frac{t^2}{1-mt^{2-n}-ct^2}=\dfrac{\frac{1}{2}m(n-2)t^{2-n}-ct^2}{1-mt^{2-n}-ct^2}.
\end{equation}
Since $f$ is a product of two decreasing functions in the interval $\left(s_0,\left(\frac{mn}{2}\right)^{\frac{1}{n-2}}\right),$ we have
\[
f(h(r))\leq f(d_\Sigma).
\]
This and the fact $|\n_\Sigma r|\leq 1$ imply
\[
\begin{split}
|\Sigma|&\leq \frac{d_\Sigma}{k\sqrt{1-md_\Sigma^{2-n}-cd_\Sigma^2}}\left[|\partial \Sigma|+k\int_\Sigma|\vec{H}|d\Sigma\right]\\
&\qquad+ \frac{1}{k}\int_\Sigma\frac{-1}{n-1}\ric_{M^n(c)}(\n r)\left(\dfrac{h(r)}{h'(r)}\right)^2|\n_\Sigma r|^2d\Sigma\\
&\leq \frac{d_\Sigma}{k\sqrt{1-md_\Sigma^{2-n}-cd_\Sigma^2}}\left[|\partial \Sigma|+k\int_\Sigma|\vec{H}|d\Sigma\right] + \dfrac{f(d_\Sigma)}{k}|\Sigma|,\\
\end{split}
\]
i.e.,
\[
|\Sigma|\leq \frac{d_\Sigma}{(k-f(d_\Sigma))\sqrt{1-md_\Sigma^{2-n}-cd_\Sigma^2}}\left[|\partial \Sigma|+k\int_\Sigma|\vec{H}|d\Sigma\right].
\]
Since
\[
\begin{split}
k-f(d_\Sigma)&= k - \dfrac{\frac{1}{2}m(n-2)d_\Sigma^{2-n}-cd_\Sigma^2}{1-md_\Sigma^{2-n}-cd_\Sigma^2}\\
&=\dfrac{k-\frac{m}{2}(n+2k-2)d_\Sigma^{2-n}-c(k-1)d_\Sigma^2}{1-md_\Sigma^{2-n}-cd_\Sigma^2}\\
&=\dfrac{\left(1-\frac{mn}{2}d_\Sigma^{2-n}\right) + (k-1)(1-md_\Sigma^{2-n}-cd_\Sigma^2)}{1-md_\Sigma^{2-n}-cd_\Sigma^2},
\end{split}
\]
the inequality (\ref{iso-SS1-mod-3}) follows. This concludes the proof of the item (i) of Theorem \ref{iso-SS1}.

Now, we prove the item (ii) of Theorem \ref{iso-SS1}. Notice that, since $\dfrac{h(r)}{h'(r)}$ is increasing for $s=h(r)\in\left(\left(\frac{mn}{2}\right)^{\frac{1}{n-2}},s_1\right),$ we have
\begin{equation}\label{est-SS1-2}
\frac{h(r)}{h'(r)}=\dfrac{s}{\sqrt{1-ms^{2-n}-cs^2}}\leq \frac{R_\Sigma}{\sqrt{1-mR_\Sigma^{2-n}-cR_\Sigma^2}}.
\end{equation}
Since, if $c>0,$ $-\frac{1}{n-1}\ric(\n r)<0$ for $s=h(r)\in\left(\left(\frac{m(n-2)}{2c}\right)^{\frac{1}{n}},s_1\right),$ by using 
\begin{equation}\label{est-SS1-31}
\frac{h(r)}{h'(r)}=\dfrac{s}{\sqrt{1-ms^{2-n}-cs^2}}\geq \frac{d_\Sigma}{\sqrt{1-md_\Sigma^{2-n}-cd_\Sigma^2}},
\end{equation}
for $s=h(r)\in\left(\left(\frac{mn}{2}\right)^{\frac{1}{n-2}},s_1\right),$  and (\ref{est-SS1-2}) in (\ref{iso-SS-prov}), we obtain (\ref{iso-SS1-R}). This proves the item (ii) of Theorem \ref{iso-SS1}.

Let us prove the item (iii) of Theorem \ref{iso-SS1}. Since $-\frac{1}{n-1}\ric(\n r)>0$ for $s=h(r)\in\left(\left(\frac{mn}{2}\right)^{\frac{1}{n-2}},\infty\right),$ when $c\leq 0,$ and for  $s=h(r)\in\left(\left(\frac{mn}{2}\right)^{\frac{1}{n-2}},\left(\frac{m(n-2)}{2c}\right)^{\frac{1}{n}}\right),$ when $c>0,$ then by using (\ref{est-SS1-2}) in (\ref{iso-SS-prov}) we obtain (\ref{iso-SS1-R}). This proves the item (iii) of Theorem \ref{iso-SS1}.

Let us prove the item (iv). In order to prove (\ref{iso-SS1-mod}), notice that the function $f(t)$ defined in (\ref{f}) satisfies $f(t)<1$ for $t\in\left(\left(\frac{mn}{2}\right)^{\frac{1}{n-2}},s_1\right).$ This implies
\[
\begin{split}
|\Sigma|&\leq \frac{R_\Sigma}{\sqrt{1-mR_\Sigma^{2-n}-cR_\Sigma^2}}\left[|\partial \Sigma|+k\int_\Sigma|\vec{H}|d\Sigma\right]\\
&\qquad + \frac{1}{k}\int_\Sigma\frac{-1}{n-1}\ric_{M^n(c)}(\n r)\left(\dfrac{h(r)}{h'(r)}\right)^2|\n_\Sigma r|^2d\Sigma\\
&\leq \frac{R_\Sigma}{\sqrt{1-mR_\Sigma^{2-n}-cR_\Sigma^2}}\left[|\partial \Sigma|+k\int_\Sigma|\vec{H}|d\Sigma\right] + \frac{1}{k}|\Sigma|,\\
\end{split}
\]
provided $|\n_\Sigma r|\leq 1.$ Therefore, the inequality (\ref{iso-SS1-mod}) follows immediately. The inequality (\ref{iso-SS1-mod-2}) follows observing that 
\[
\frac{R_\Sigma}{\sqrt{1-mR_\Sigma^{2-n}-cR_\Sigma^2}}<\frac{1}{\sqrt{-c}}
\]
for $c<0.$ This concludes the proof of the item (iv) and the proof of Theorem \ref{iso-SS1}.

\end{proof}

\begin{proof}[Proof of Corollary \ref{iso-Hn}.]
Taking $m\ria 0$ in the equation (\ref{defi-SS}), for $c<0,$ we have 
\[
h'(r)=\sqrt{1-ch(r)^2}, \ h(0)=0,
\]
with solution $h(r)=\dfrac{1}{\sqrt{-c}}\sinh(\sqrt{-c}r).$ Replacing $\dfrac{h(r)}{h'(r)}=\dfrac{\tanh(\sqrt{-c}r)}{\sqrt{-c}}$ and \[\ric_{\h^n(c)}(\n r) =-(n-1)\frac{h''(r)}{h(r)}=(n-1)c,\] in the inequality (\ref{iso-SS-prov}) in the proof of Theorem \ref{iso-SS1}, for $c<0$, and by using that $\tanh(\sqrt{-c}r)<\tanh(\sqrt{-c}\widetilde{R_\Sigma})$ in $\Sigma,$ we have the result.
\end{proof}

\begin{proof}[Proof of Corollary \ref{iso-Sn}.]
Taking $m\ria 0$ in the equation (\ref{defi-SS}), for $c>0,$ we have 
\[
h'(r)=\sqrt{1-ch(r)^2}, \ h(0)=0,
\]
with solution $h(r)=\dfrac{1}{\sqrt{c}}\sin(\sqrt{c}r).$ Replacing $\dfrac{h(r)}{h'(r)}=\dfrac{\tan(\sqrt{c}r)}{\sqrt{c}},$ and \[\ric_{\s^n_+(c)}(\n r) =-(n-1)\frac{h''(r)}{h(r)}=(n-1)c,\] 
in the inequality (\ref{iso-SS-prov}) in the proof of Theorem \ref{iso-SS1}, for $c>0$, and by using that $\tan(\sqrt{c}r)<\tan(\sqrt{c}\widetilde{R_\Sigma})$ in $\Sigma,$ we have the result.
\end{proof}

Now we prove Theorem \ref{iso-RN}.

\begin{proof}[Proof of Theorem \ref{iso-RN}.]\label{proof-iso-RN} The identity (\ref{defi-RN}) is equivalent to
\begin{equation}\label{RN-1}
h'(r)^2 = 1-mh(r)^{2-n} + q^2h(r)^{4-2n}.
\end{equation}
By taking implicit derivatives in (\ref{RN-1}), we have
\[
2h'(r)h''(r) = m(n-2)h(r)^{1-n}h'(r) - 2(n-2)q^2h(r)^{3-2n}h'(r),
\]
which gives
\[
\dfrac{h''(r)}{h(r)} = \dfrac{m(n-2)}{2}h(r)^{-n} - (n-2)q^2h(r)^{2-2n} = \dfrac{n-2}{2h(r)^n}\left(m-\dfrac{2q^2}{h(r)^{n-2}}\right),
\]
i.e.,
\begin{equation}\label{RN-2}
-\frac{1}{n-1}\ric_{M^n}(\n r)=\dfrac{h''(r)}{h(r)} = \dfrac{n-2}{2h(r)^n}\left(m-\dfrac{2q^2}{h(r)^{n-2}}\right),
\end{equation}
provided $h'(r)\!>\!0.$ We have $-\frac{1}{n-1}\!\ric_{M^n}\!(\n r)\!\! >\! 0$ if, and only if, $s\! =\! h(r)\!\! >\!\!\left(\!\frac{2q^2}{m}\!\right)^{\frac{1}{n-2}}\! .$ Since $\left(\frac{2q^2}{m}\right)^{\frac{1}{n-2}}\!\!<\!\left(\!\frac{2q^2}{m-\sqrt{m^2-4q^2}}\!\right)^{\frac{1}{n-2}}\!\!=\!s_0,$ we have $-\frac{1}{n-1}\!\ric_{M^n}\!(\n r)\!\!>\!0$ everywhere in the Reissner-Nordstrom manifold.

Replacing the estimate
\[
\lan -\vec{H},\n r\ran \leq |\vec{H}|,
\]
into the isoperimetric inequality (\ref{eq-Teo-1-2}) of Proposition \ref{Teo-1}, we have
\begin{equation}\label{RN-3}
\begin{split}
|\Sigma|&\leq \dfrac{1}{k}\left[\int_\Sigma \dfrac{h(r)}{h'(r)}d\Sigma + \int_\Sigma |\vec{H}|\dfrac{h(r)}{h'(r)}d\Sigma\right]\\
&\qquad -\dfrac{1}{k(n-1)}\int_\Sigma\ric_{M^n}(\n r)\left(\frac{h(r)}{h'(r)}\right)^2|\n_\Sigma r|^2d\Sigma.\\
\end{split}
\end{equation}

In order to conclude the proof of Theorem \ref{iso-RN}, we need to estimate the quotient $\dfrac{h(r)}{h'(r)},$ and for that we will analyse when $\dfrac{h(r)}{h'(r)}$ is increasing or decreasing. By using (\ref{RN-1}) and (\ref{RN-2}), we obtain
\[
\begin{split}
\dfrac{d}{dr}\left(\dfrac{h(r)}{h'(r)}\right)&=\frac{h'(r)^2 - h(r)h''(r)}{h'(r)^2} \\
&= \dfrac{1-mh(r)^{2-n} + q^2 h(r)^{4-2n} - \frac{m(n-2)}{2}h(r)^{2-n} + (n-2)q^2h(r)^{4-2n}}{h'(r)^2}\\
&=\dfrac{1-\frac{mn}{2}h(r)^{2-n} + (n-1)q^2h(r)^{4-2n}}{h'(r)^2}.\\
\end{split}
\]
Notice that the expression $1-\frac{mn}{2}h(r)^{2-n} + (n-1)q^2h(r)^{4-2n}$ is a quadratic function of $u=h(r)^{2-n}.$ Let
\[
P(u)=1-\dfrac{mn}{2}u + (n-1)q^2u^2,
\]
which has the two different roots
\[
\alpha_1:=\dfrac{mn - \sqrt{m^2n^2-16(n-1)q^2}}{4(n-1)q^2} \ \mbox{and} \ \alpha_2:=\dfrac{mn + \sqrt{m^2n^2-16(n-1)q^2}}{4(n-1)q^2}.
\]
Since $P(u)>0$ for $u<\alpha_1$ and for $u>\alpha_2,$ and $u=h(r)^{2-n},$ we have that $\dfrac{d}{dr}\left(\dfrac{h(r)}{h'(r)}\right)>0$ for 
\[
h(r)>\alpha_1^{-\frac{1}{n-2}}:=s_2 \ \mbox{and} \ h(r)<\alpha_2^{-\frac{1}{n-2}}:=s_3.
\]
Now, we need to see if $s_2\in(s_0,\infty)$ and $s_3\in(s_0,\infty).$ Define $Q(u)=1-mu+q^2u^2.$ Since
\[
Q(u)-P(u) = \frac{(n-2)}{2}u(m-2q^2u^2),
\]
we have $Q(u)>P(u)$ for $0<u<\dfrac{m}{2q^2}.$ If we denote by $\beta_1<\beta_2$ the roots of $Q(u),$ we have also \[\beta_1<\frac{m}{2q^2}<\beta_2.\]
These facts imply that $\alpha_1<\beta_1.$ Since $P(u)>Q(u)$ for $u>\dfrac{m}{2q^2}$ and $P\left(\dfrac{m}{2q^2}\right)=Q\left(\dfrac{m}{2q^2}\right)=1-\dfrac{m^2}{2q^2}<0,$ we have $\alpha_2<\beta_2,$ i.e.,
\[
\alpha_1<\beta_1<\alpha_2<\beta_2.
\]
Since $\alpha_1=s_2^{2-n}, \ \beta_1=s_0^{2-n}$ and $\alpha_2=s_3^{2-n},$ we have
\[
s_3<s_0<s_2,
\]
i.e., $s_2\in(s_0,\infty)$ and $s_3\not\in(s_0,\infty).$ Therefore
\[
\dfrac{d}{dr}\left(\frac{h(r)}{h'(r)}\right)<0 \ \mbox{for} \ h(r)\in(s_0,s_2) \ \mbox{and} \ \dfrac{d}{dr}\left(\frac{h(r)}{h'(r)}\right)>0 \ \mbox{for} \ h(r)\in(s_2,\infty).
\]
Since $\Sigma$ is compact, we can consider
\[
d_\Sigma = \min\{s\in (s_0,\infty)| \Sigma\cap\{\{s\}\times\s^{n-1}\}\neq\emptyset\}
\] 
and 
\[
R_\Sigma = \max\{s\in (s_0,\infty)| \Sigma\cap\{\{s\}\times\s^{n-1}\}\neq\emptyset\}.
\]
We have
\begin{equation}\label{RN-est-1}
\dfrac{h(r)}{h'(r)}=\dfrac{s}{\sqrt{1-ms^{2-n}+q^2s^{4-2n}}}\leq \dfrac{d_\Sigma}{\sqrt{1-md_\Sigma^{2-n}+q^2d_\Sigma^{4-2n}}} \ \mbox{for}\ s\in (s_0,s_2)
\end{equation}
and
\begin{equation}\label{RN-est-2}
\dfrac{h(r)}{h'(r)}=\dfrac{s}{\sqrt{1-ms^{2-n}+q^2s^{4-2n}}}\leq \dfrac{R_\Sigma}{\sqrt{1-mR_\Sigma^{2-n}+q^2R_\Sigma^{4-2n}}} \ \mbox{for}\ s\in (s_2,\infty).
\end{equation}
Replacing these estimates into (\ref{RN-3}) we obtain the inequalities (\ref{iso-RN-d1}) and (\ref{iso-RN-R}). 

On the other hand,
\[
\begin{split}
-\frac{1}{n-1}\ric_{M^n}(\n r)&\left(\dfrac{h(r)}{h'(r)}\right)^2=\frac{n-2}{2h(r)^{n}}\left(m-\frac{2q^2}{h(r)^{n-2}}\right)\left(\dfrac{h(r)}{h'(r)}\right)^2\\
&=\frac{n-2}{2h(r)^{n-2}}\left(m-\frac{2q^2}{h(r)^{n-2}}\right)\dfrac{1}{1-mh(r)^{2-n}+q^2h(r)^{4-2n}}\\
& = f(h(r)^{2-n}),
\end{split}
\]
where 
\[
f(t)=\dfrac{(n-2)(mt -2q^2t^2)}{2(1-mt+q^2t^2)}.
\] 
Since 
\[
f'(t)=\dfrac{(n-2)(mq^2t^2 - 4q^2t + m)}{2(1-mt+q^2t^2)^2}
\]
and $m>2q,$ we have that $f(t)$ is increasing for every $t.$ This implies that $\tilde{f}(r) = f(h(r)^{2-n})$ is decreasing for every $r>0$ and thus 
\begin{equation}\label{RN-est-3}
-\frac{1}{n-1}\ric_{M^n}(\n r)\left(\dfrac{h(r)}{h'(r)}\right)^2 = f (h(r)^{2-n})\leq f(d_\Sigma^{2-n})= C_3(d_\Sigma).
\end{equation}
Replacing the estimates (\ref{RN-est-1}) and (\ref{RN-est-3}) in (\ref{RN-3}), and by using that $|\n_\Sigma r|\leq 1,$ we obtain
\[
|\Sigma|\leq \dfrac{d_\Sigma}{k\sqrt{1-md_\Sigma^{2-n}+q^2d_\Sigma^{4-2n}}}\left[|\partial\Sigma| +\int_\Sigma |\vec{H}|d\Sigma \right] + \frac{C_3(d_\Sigma)}{k}|\Sigma|,
\]
and then the inequality (\ref{iso-RN-mod-1}) follows. Analogously, replacing the estimates (\ref{RN-est-2}) and (\ref{RN-est-3}) in (\ref{RN-3}), we obtain the inequality (\ref{iso-RN-mod-3}). This concludes the proof of Theorem \ref{iso-RN}.
\end{proof}

\begin{proof}[Proof of Theorem \ref{mono}.] Let $\lambda:\R\ria\R$ be a smooth function such that $\lambda(t)=0$ for $t\leq0$ and $\lambda'(t)>0$ for $t>0.$ By using (\ref{eq-div}), we have
\begin{equation}\label{eq-div-lambda}
\begin{split}
\di_\Sigma(\lambda(R-r)h(r)\n_\Sigma r)&= h(r) \lan\n_\Sigma r,\n_\Sigma (\lambda(R-r))\ran + \lambda(R-r)\di_\Sigma(h(r)\n_\Sigma r)\\
									&= -\lambda'(R-r)h(r)|\n_\Sigma r|^2 + k\lambda(R-r)h'(r)\\
									&\qquad + k\lambda(R-r)h(r)\lan\vec{H},\n r\ran\\
\end{split}           
\end{equation}
for each $R>0.$ Since $\Sigma$ is proper, then $\lambda(R-r)h(r)\n_\Sigma r$ has compact support in $\Sigma\cap B_R.$ Thus, by using the divergence theorem in (\ref{eq-div-lambda}), we have
\begin{equation}\label{integral}
\int_\Sigma \lambda'(R-r)h(r)|\n_\Sigma r|^2 d\Sigma  = k\int_\Sigma \lambda(R-r)h'(r)d\Sigma + k\int_\Sigma \lambda(R-r)h(r)\lan\vec{H},\n r\ran d\Sigma.\\
\end{equation} 
From now on, we will continue the proof of the items (i) and (ii) separately.

\emph{Conclusion of the proof of the item (i).} Replacing the estimate 
\[
\int_\Sigma \lambda'(R-r)h(r)|\n_\Sigma r|^2 d\Sigma \leq \int_\Sigma \lambda'(R-r)h(r) d\Sigma = \dfrac{d}{dR}\left(\int_\Sigma \lambda(R-r)h(r) d\Sigma\right),\\
\]
in (\ref{integral}), by using the hypothesis that $\dfrac{h(r)}{h'(r)}$ is non-decreasing (i.e., $\dfrac{h'(r)}{h(r)}$ is non-increasing), and that $\lambda(R-r)=0$ for $r>R,$ we obtain
\[
\begin{split}
\dfrac{d}{dR}\left(\int_\Sigma\lambda(R-r)h(r)d\Sigma\right)&\geq k\dfrac{h'(R)}{h(R)}\int_\Sigma\lambda(R-r)h(r)d\Sigma\\
&\qquad + k\int_\Sigma \lambda(R-r)h(r)\lan\vec{H},\n r\ran d\Sigma.
\end{split}
\]
By using the fact
\[
\begin{split}
h(R)^k\dfrac{d}{dR}\left(\dfrac{1}{h(R)^k}\int_\Sigma \lambda(R-r)h(r) d\Sigma\right) &= \dfrac{d}{dR}\left(\int_\Sigma \lambda(R-r)h(r) d\Sigma\right)\\
&\qquad -\dfrac{kh'(R)}{h(R)}\int_\Sigma \lambda(R-r)h(r) d\Sigma,
\end{split}
\]
we have
\[
\dfrac{d}{dR}\left(\dfrac{1}{h(R)^k}\int_\Sigma \lambda(R-r)h(r) d\Sigma\right) \geq \dfrac{k}{h(R)^k}\int_\Sigma\lambda(R-r)h(r)\lan\vec{H},\n r\ran d\Sigma.\\
\]
Considering a sequence of functions $\lambda(t)$ converging to the characteristic function of the interval $[0,\infty),$ we obtain
\begin{equation}\label{mono-partial}
\dfrac{d}{dR}\left(\dfrac{1}{h(R)^k}\int_{\Sigma\cap B_R} h(r) d\Sigma\right) \geq \dfrac{k}{h(R)^k}\int_{\Sigma\cap B_R}h(r)\lan\vec{H},\n r\ran d\Sigma.
\end{equation}
By using the hypothesis $k|\vec{H}|\leq \alpha,$ we have
\[
\dfrac{d}{dR}\left(\dfrac{1}{h(R)^k}\int_{\Sigma\cap B_R} h(r) d\Sigma\right) \geq -\alpha\dfrac{1}{h(R)^k}\int_{\Sigma\cap B_R}h(r) d\Sigma,
\]
i.e.,
\begin{equation}\label{mono-final-1}
\dfrac{d}{dR}\log\left(\dfrac{1}{h(R)^k}\int_{\Sigma\cap B_R} h(r) d\Sigma\right) \geq -\alpha.
\end{equation}
Now, let $r_0,r_1\in I$ such that $r_0<r_1.$ Integrating (\ref{mono-final-1}) from $r_0$ to $r_1,$ we obtain
\[
\dfrac{e^{\alpha r_0}}{h(r_0)^k}\int_{\Sigma\cap B_{r_0}} h(s) d\Sigma \leq
\dfrac{e^{\alpha r_1}}{h(r_1)^k}\int_{\Sigma\cap B_{r_1}} h(s) d\Sigma, 
\]
i.e., the function $\displaystyle{V_1(r)=\dfrac{e^{\alpha r}}{h(r)^k}\int_{\Sigma\cap B_{r}} h(s) d\Sigma}$ is monotone non-decreasing. This implies
\[
\int_{\Sigma\cap B_r} h(s) d\Sigma \geq e^{-\alpha(r-r_0)}\left(\dfrac{h(r)}{h(r_0)}\right)^k\int_{\Sigma\cap B_{r_0}}h(s) d\Sigma,
\]
for every $r>r_0.$ Since $h$ is an increasing function, we have
\[
h(r)|\Sigma\cap B_r|\geq \int_{\Sigma\cap B_r} h(s) d\Sigma \geq e^{-\alpha(r-r_0)}\left(\dfrac{h(r)}{h(r_0)}\right)^k\int_{\Sigma\cap B_{r_0}}h(s) d\Sigma.
\]
This proves the estimate (\ref{est.mono-1}) and concludes the proof of the item (i) of Theorem \ref{mono}.

\emph{Conclusion of the proof of the item (ii).} The identity (\ref{integral}) gives
\[
\int_\Sigma \lambda(R-r)h'(r)d\Sigma = \frac{1}{k}\int_\Sigma \lambda'(R-r)h(r)|\n_\Sigma r|^2 d\Sigma - \int_\Sigma \lambda(R-r)h(r)\lan\vec{H},\n r\ran d\Sigma.
\]
This implies
\begin{equation}\label{eq.mmmm1}
\begin{split}
\dfrac{d}{dR}\left(\dfrac{1}{h(R)^k}\int_\Sigma \lambda(R-r)h'(r)d\Sigma\right) &=-\dfrac{kh'(R)}{h(R)^{k+1}}\int_\Sigma \lambda(R-r)h'(r)d\Sigma\\
&\qquad + \frac{1}{h(R)^k}\int_\Sigma \lambda'(R-r)h'(r)d\Sigma\\
&=-\dfrac{h'(R)}{h(R)^{k+1}}\int_\Sigma \lambda'(R-r)h(r)|\n_\Sigma r|^2 d\Sigma\\
&\qquad +\dfrac{kh'(R)}{h(R)^{k+1}}\int_\Sigma \lambda(R-r)h(r)\lan\vec{H},\n r\ran d\Sigma \\
&\qquad + \frac{1}{h(R)^k}\int_\Sigma \lambda'(R-r)h'(r)d\Sigma.\\
\end{split}
\end{equation}
On the other hand, by using the hypothesis that $\dfrac{h(r)}{h'(r)}$ is non-decreasing, i.e., $-\dfrac{h(r)}{h'(r)}$ is non-increasing, we have

\begin{equation}\label{eq.mmmm2}
\begin{split}
-\dfrac{h'(R)}{h(R)^{k+1}}\int_\Sigma \lambda'(R-r)h(r)&|\n_\Sigma r|^2d\Sigma +\frac{1}{h(R)^k}\int_\Sigma \lambda'(R-r)h'(r)d\Sigma\\
& = \frac{1}{h(R)^k}\left[-\dfrac{h'(R)}{h(R)}\int_\Sigma \lambda'(R-r)h(r)|\n_\Sigma r|^2 d\Sigma\right.\\
&\left.\qquad + \int_\Sigma \lambda'(R-r)h'(r)d\Sigma\right]\\
&\geq\frac{1}{h(R)^k}\left[-\dfrac{h'(R)}{h(R)}\frac{h(R)}{h'(R)}\int_\Sigma \lambda'(R-r)h'(r)|\n_\Sigma r|^2d\Sigma \right.\\
&\left.\qquad + \int_\Sigma \lambda'(R-r)h'(r)d\Sigma\right]\\
&=\dfrac{1}{h(R)^k}\int_\Sigma \lambda'(R-r)[1-|\n_\Sigma r|^2]h'(r)d\Sigma.\\
\end{split}
\end{equation}
Thus, replacing (\ref{eq.mmmm2}) in the right hand side of (\ref{eq.mmmm1}), we obtain
\[
\begin{split}
\dfrac{d}{dR}\left(\dfrac{1}{h(R)^k}\int_\Sigma \lambda(R-r)h'(r)d\Sigma\right)&\geq \dfrac{1}{h(R)^k}\int_\Sigma \lambda'(R-r)[1-|\n_\Sigma r|^2]h'(r)d\Sigma\\
&\qquad+ \dfrac{kh'(R)}{h(R)^{k+1}}\int_\Sigma \lambda(R-r)h(r)\lan\vec{H},\n r\ran d\Sigma\\
&\geq \dfrac{kh'(R)}{h(R)^{k+1}}\int_\Sigma \lambda(R-r)h(r)\lan\vec{H},\n r\ran d\Sigma.
\end{split}
\]
Considering a sequence of functions $\lambda(t)$ converging to the characteristic function of $[0,\infty),$ we obtain
\begin{equation}\label{mono-partial2}
\dfrac{d}{dR}\left(\dfrac{1}{h(R)^k}\int_{\Sigma\cap B_R} h'(r) d\Sigma\right) \geq \dfrac{kh'(R)}{h(R)^{k+1}}\int_{\Sigma\cap B_R} h(r)\lan\vec{H},\n r\ran d\Sigma.\\
\end{equation}
By using the hypothesis $k|\vec{H}|\leq \alpha$ and that $-\dfrac{h(r)}{h'(r)}$ is non-increasing, we have
\[
\begin{split}
\dfrac{d}{dR}\left(\dfrac{1}{h(R)^k}\int_{\Sigma\cap B_R} h'(r) d\Sigma\right)&\geq -\alpha\dfrac{h'(R)}{h(R)^{k+1}}\int_{\Sigma\cap B_R} h(r) d\Sigma\\
&\geq -\alpha\dfrac{h'(R)}{h(R)^{k+1}}\cdot\frac{h(R)}{h'(R)}\int_{\Sigma\cap B_R} h(r)\cdot\frac{h'(r)}{h(r)} d\Sigma\\
&=-\alpha\frac{1}{h(R)^k}\int_{\Sigma\cap B_R} h'(r) d\Sigma,\\
\end{split}
\]
i.e.,
\begin{equation}\label{mono-final-2}
\dfrac{d}{dR}\log\left(\dfrac{1}{h(R)^k}\int_{\Sigma\cap B_R} h'(r) d\Sigma\right) \geq -\alpha.
\end{equation}
Now, let $r_0,r_1\in I$ such that $r_0<r_1.$ Integrating (\ref{mono-final-2}) from $r_0$ to $r_1,$ we obtain
\[
\dfrac{e^{\alpha r_0}}{h(r_0)^k}\int_{\Sigma\cap B_{r_0}} h'(r) d\Sigma \leq
\dfrac{e^{\alpha r_1}}{h(r_1)^k}\int_{\Sigma\cap B_{r_1}} h'(r) d\Sigma, 
\]
i.e., the function $\displaystyle{V_2(r)=\dfrac{e^{\alpha r}}{h(r)^k}\int_{\Sigma\cap B_r} h'(s)d\Sigma}$ is monotone non-decreasing. This implies
\[
\int_{\Sigma\cap B_r} h'(s) d\Sigma \geq e^{-\alpha(r-r_0)}\left(\dfrac{h(r)}{h(r_0)}\right)^k\int_{\Sigma\cap B_{r_0}}h'(s) d\Sigma,
\]
for every $r>r_0.$ If there exists $B>0$ such that $h'(r)<B,$ then
\[
B|\Sigma\cap B_r|\geq \int_{\Sigma\cap B_r} h'(s) d\Sigma \geq e^{-\alpha(r-r_0)}\left(\dfrac{h(r)}{h(r_0)}\right)^k\int_{\Sigma\cap B_{r_0}}h'(s) d\Sigma.
\]
This proves the inequality (\ref{est.mono-2}). Analogously, if $h''(r)>0,$ then 
\[
h'(r)|\Sigma\cap B_r|\geq \int_{\Sigma\cap B_r} h'(s) d\Sigma \geq e^{-\alpha(r-r_0)}\left(\dfrac{h(r)}{h(r_0)}\right)^k\int_{\Sigma\cap B_{r_0}}h'(s) d\Sigma.
\]
This proves the inequality (\ref{est.mono-3}) and concludes the proof of the item (ii) of Theorem \ref{mono}.
\end{proof}

In the following, we prove the corollaries of Theorem \ref{mono}.

\begin{proof}[Proof of Corollary \ref{mono-cones}.] The proof follows immediately from the inequalities (\ref{mono-partial}) and (\ref{mono-partial2}). 
\end{proof}

\begin{proof}[Proof of Corollary \ref{mono-SS}.] First notice that, since \[h'(r)=\sqrt{1-mh(r)^{2-n} - ch(r)^2},\] we have
\[
\frac{d}{dr}\left(\dfrac{h(r)}{h'(r)}\right) =\dfrac{1-\frac{mn}{2}h(r)^{2-n}}{1-mh(r)^{2-n} - ch(r)^2}>0 
\]
for $h(r)>\left(\dfrac{mn}{2}\right)^{\frac{1}{n-2}}.$ If $c>0,$ the condition $\frac{n^n}{4(n-2)^{n-2}}m^2c^{n-2}<1$ implies
$\left(\dfrac{mn}{2}\right)^{\frac{1}{n-2}}\in(s_0,s_1).$ Since $h'(r) = \sqrt{1-mh(r)^{2-n} - ch(r)^2}\leq 1$ for $c>0,$ by using the inequality (\ref{est.mono-2}), we have
\[
|\Sigma\cap B_r| \geq C_2(r_0)e^{-\alpha r}h(r)^k,
\]
for every $r>r_0.$ This proves the item (i) of Corollary \ref{mono-SS}. The estimate of the item (ii) is an immediate consequence of the inequality (\ref{est.mono-1}). If $c<0$ and $\alpha<k-1,$ then the asymptotic expansion
\[
h(r)=\dfrac{1}{\sqrt{-c}}\sinh(\sqrt{-c}r) + \dfrac{m}{2n\sqrt{-c}}\sinh^{1-n}(\sqrt{-c}r) + O(\sinh^{-n-1}(\sqrt{-c}r))
\]
implies that $|\Sigma\cap B_r|$ has at least exponential volume growth at infinity. This proves the item (ii) of Corollary \ref{mono-SS}.
\end{proof}

In order to prove Corollary \ref{mono-RN}, we will explore the asymptotic behaviour of the warping function for the Reissner-Nordstrom manifold.

\begin{lemma}\label{asymp-RN-1}
The warping function $h$ of the Reissner-Nordstrom manifold $M^n$, $n\geq 4,$ satisfies
\begin{equation}
h(r)=r + \frac{m}{2(n-3)}r^{3-n} + O(r^{5-2n}).
\end{equation}
\end{lemma}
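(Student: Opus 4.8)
The plan is to expand the first-order equation satisfied by $h$ and integrate it from $r$ to infinity. From (\ref{RN-1}) and $h'>0$ we have $h'(r)=\sqrt{1-mh(r)^{2-n}+q^2h(r)^{4-2n}}$; by (\ref{RN-2}) one has $h''>0$ everywhere, so together with $h'(0)=0$ the derivative $h'$ increases from $0$ to $\lim_{r\to\infty}h'(r)=1$, whence $0<h'(r)<1$ for $r>0$, $h$ is strictly increasing with $h(0)=s_0$, and $h(r)\to\infty$. Since $1-h'(t)=1-\sqrt{1-mh(t)^{2-n}+q^2h(t)^{4-2n}}\le mh(t)^{2-n}$ and $h(t)\ge\frac{1}{2}t$ for large $t$, and since $2-n<-1$ when $n\ge4$, the integral $\int_0^\infty(1-h'(t))\,dt$ converges; therefore $h(r)-r=s_0-\int_0^r(1-h'(t))\,dt$ tends to a finite limit $c_\infty$, so that $h(r)=r+c_\infty+o(1)$ and in particular $h(r)=r+O(1)$.

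Next I would Taylor-expand $h'$: from $(1+x)^{1/2}=1+\frac{x}{2}+O(x^2)$ with $x=q^2h^{4-2n}-mh^{2-n}\to0$ one gets $h'(r)=1-\frac{m}{2}h(r)^{2-n}+O(h(r)^{4-2n})$, and inserting $h(r)=r+O(1)$, hence $h(r)^{2-n}=r^{2-n}+O(r^{1-n})$, gives
\[
1-h'(r)=\frac{m}{2}\,r^{2-n}+O(r^{1-n}).
\]
Integrating this from $r$ to $\infty$, with $\int_r^\infty(1-h'(t))\,dt=h(r)-r-c_\infty$ and $\int_r^\infty t^{2-n}\,dt=\frac{r^{3-n}}{n-3}$, yields $h(r)=r+c_\infty+\frac{m}{2(n-3)}r^{3-n}+O(r^{2-n})$. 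Once $c_\infty=0$ this reads $h(r)=r+O(r^{3-n})$, so $h(r)^{2-n}=r^{2-n}+O(r^{4-2n})$, and repeating the integration with this sharper input replaces $O(r^{2-n})$ by $O(r^{5-2n})$, giving
\[
h(r)=r+\frac{m}{2(n-3)}\,r^{3-n}+O(r^{5-2n}).
\]

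The one point that remains, and the step I expect to be the main obstacle, is the identity $c_\infty=0$; this is where the normalization $h(0)=s_0$ is used. Changing variables $s=h(t)$ in the integral above turns it into $\int_{s_0}^\infty\!\big(\tfrac{1}{\sqrt{1-ms^{2-n}+q^2s^{4-2n}}}-1\big)\,ds$, so that
\[
c_\infty=s_0-\int_{s_0}^\infty\!\Big(\frac{1}{\sqrt{1-ms^{2-n}+q^2s^{4-2n}}}-1\Big)\,ds,
\]
and one must verify that the right-hand side vanishes (it does, for instance, when $n=4$, the integrand then having an elementary antiderivative whose endpoint values cancel $s_0$). I note finally that in the only application of this lemma, Corollary \ref{mono-RN}, one needs merely $h(r)=r(1+o(1))$, which is already contained in the first paragraph, so the precise coefficient and remainder play no essential role there.
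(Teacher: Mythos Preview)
Your approach is essentially the same as the paper's: both expand $h'(r)=\sqrt{1-mh^{2-n}+q^2h^{4-2n}}$ as $1-\tfrac{m}{2}h^{2-n}+O(h^{4-2n})$ and integrate. The paper works in the inverse variable, defining
\[
r(s)=s-\int_s^\infty\Bigl(\tfrac{1}{\sqrt{1-mt^{2-n}+q^2t^{4-2n}}}-1\Bigr)\,dt,
\]
expanding the integrand to get $r(s)=s-\tfrac{m}{2(n-3)}s^{3-n}+O(s^{5-2n})$, and then inverting formally. You work directly in $r$ and bootstrap the remainder; the mechanics are equivalent.

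The gap you isolate is real, and it is present in the paper's argument as well. The paper's $r(s)$ satisfies $r'(s)=F'(s)$ where $F$ is the function from Example~\ref{ex-RN} with $F(s_0)=0$ and $h=F^{-1}$; hence $r(s)=F(s)+C$ with
\[
C=r(s_0)=s_0-\int_{s_0}^\infty\Bigl(\tfrac{1}{\sqrt{1-mt^{2-n}+q^2t^{4-2n}}}-1\Bigr)\,dt,
\]
which is precisely your $c_\infty$. The paper asserts ``therefore $s(r)=h(r)$'' without checking $C=0$, and in fact there is no reason this integral identity should hold for general $m,q,n$: the integrand near $s_0$ behaves like $(t-s_0)^{-1/2}$, its integral is finite but depends nontrivially on the parameters, and nothing forces it to equal $s_0$. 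So the statement as written should carry an additive constant, $h(r)=r+c_\infty+\tfrac{m}{2(n-3)}r^{3-n}+O(r^{5-2n})$, and both your proof and the paper's yield exactly this.

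Your closing remark is correct and worth emphasizing: in Corollary~\ref{mono-RN} one only uses that $h(r)^k$ has polynomial growth of order $k$, which follows from $h(r)/r\to1$ and is already established in your first paragraph (and is unaffected by $c_\infty$). So the defect in the lemma has no downstream consequence.
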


\begin{proof}
Define the function
\begin{equation}\label{r(s)}
r(s)= s - \int_s^{\infty}\left(\dfrac{1}{\sqrt{1-mt^{2-n}+q^2t^{4-2n}}} -1\right)dt.
\end{equation}
Since for any $a>b>0$ holds
\[
\frac{1}{\sqrt{a-b}} - \frac{1}{\sqrt{a}} = \frac{b}{\sqrt{a(a-b)}(\sqrt{a-b}+\sqrt{a})},
\]
we have
\[
\begin{split}
&\dfrac{1}{\sqrt{1-mt^{2-n}+q^2t^{4-2n}}}\! -\!1 \!=\! \frac{mt^{2-n} - q^2t^{4-2n}}{\sqrt{1-mt^{2-n}+q^2t^{4-2n}}(\sqrt{1-mt^{2-n}+q^2t^{4-2n}} +1)}\\
											&\qquad\qquad\qquad\qquad= \frac{mt^{2-n}}{2}\dfrac{2(1-\frac{q^2}{m}t^{2-n})}{\sqrt{1-mt^{2-n}+q^2t^{4-2n}}(\sqrt{1-mt^{2-n}+q^2t^{4-2n}} +1)}\\
											&\qquad\qquad\qquad\qquad=\frac{m}{2}t^{2-n} + O(t^{4-2n}).
\end{split}
\]
Thus, the improper integral in (\ref{r(s)}) converges for $n\geq 4$ and $r(s)$ is well defined. Moreover
\begin{equation}\label{r(s)-2}
r(s) = s - \frac{m}{2(n-3)}s^{3-n} + O(s^{5-2n}).
\end{equation}
Notice that the inverse function $s(r)$ satisfies the differential equation \[\dfrac{ds}{dr}=\sqrt{1-ms^{2-n} + q^2s^{4-2n}}.\] Therefore $s(r)=h(r).$ By using (\ref{r(s)-2}), we have
\[
s=r(s) + \frac{m}{2(n-3)}s^{3-n} + O(s^{5-2n}).
\]
This implies
\begin{equation}\label{r(s)-3}
h(r) = s(r) = r + \frac{m}{2(n-3)}r^{3-n} + O(r^{5-2n}),
\end{equation}
which proves the Lemma.
\end{proof}

\begin{proof}[Proof of Corollary \ref{mono-RN}.] By using the proof of Theorem \ref{iso-RN} we have that $\dfrac{h(r)}{h'(r)}$ is non-increasing for $h(r)>s_2.$ On the other hand, $h'(r)<1$ for $h(r)>\left(\frac{q^2}{m}\right)^{\frac{1}{n-2}}.$ Since $\left(\frac{q^2}{m}\right)^{\frac{1}{n-2}}<s_0,$ we have $h'(r)<1$ everywhere. The result then follows from the inequality (\ref{est.mono-2}). 

If $\Sigma$ is a minimal submanifold, then the asymptotic expansion of $h(r)$ in Lemma \ref{asymp-RN-1} implies that $|\Sigma\cap B_r|$ has at least polynomial volume growth at infinity. The result then follows.
\end{proof}

\begin{proof}[Proof of Corollary \ref{mono-space}.] Notice that, by using the hypothesis $h(0)=0$ and $h'(0)=1$ in the first order Taylor expansion of $h(r)$ we have $h(r) = r + R(r),$ where $\displaystyle{\lim_{r\ria 0}\frac{R(r)}{r}=0}.$ This implies that $\displaystyle{\lim_{r\ria 0}\frac{h(r)}{r}=1}$ and thus
\[
V_2(0^+) = \lim_{r\ria0} V_2(r) = \lim_{r\ria0} e^{-\alpha r}\omega_k\frac{r^k}{h(r)^k}\frac{1}{\omega_k r^k}\int_{\Sigma\cap B_r} h'(s)d\Sigma = \omega_k h'(0)=\omega_k.
\]
Since $V_2(r)\geq V_2(0^+)$ we have the result.

\end{proof}

We conclude this section with the proof of Theorem \ref{Theo.2}.

\begin{proof}[Proof of Theorem \ref{Theo.2}.] Let $p\in\Sigma$ and consider the distance function \[r(x)=dist_M(x,p)\] of the ambient space $M^n.$ Let $\{e_1,e_2\}$ be a geodesic frame of $\Sigma$  and $f_{ij}$ be the coefficients of the Hessian matrix of the smooth function $f$ in this frame. Since
\[
(\log h(r))_{ii} = \frac{h''(r)h(r)-h'(r)^2}{h(r)^2}r_i^2 + \frac{h'(r)}{h(r)}r_{ii}
\]
and, by using identity (\ref{hessian-2}) of Lemma \ref{lemma1} for minimal surfaces in warped product manifolds, we have
\begin{equation}\label{eq.theo2-1}
\begin{split}
\Delta_\Sigma\log h(r) &=\dfrac{h''(r)h(r)-h'(r)^2}{h(r)^2}|\n_\Sigma r|^2 + \dfrac{h'(r)}{h(r)}\Delta_\Sigma r\\
                    &=\dfrac{h''(r)h(r)-h'(r)^2}{h(r)^2}|\n_\Sigma r|^2 + \left(\dfrac{h'(r)}{h(r)}\right)^2\left(2-|\n_\Sigma r|^2\right)\\
                    &=2\left(\frac{h'(r)}{h(r)}\right)^2 - |\n_\Sigma r|^2\left(2\left(\frac{h'(r)}{h(r)}\right)^2 - \dfrac{h''(r)}{h(r)}\right).\\
\end{split}
\end{equation}

Now, let us prove the item (i) of Theorem \ref{Theo.2}. If $u(r)=r+\dfrac{h(r)}{h'(r)}$ is non-decreasing, then
\begin{equation}\label{eq.theo2-2}
\begin{split}
\dfrac{d}{dr}\left(r+\dfrac{h(r)}{h'(r)}\right) = \dfrac{2h'(r)^2 - h(r)h''(r)}{h'(r)^2}\geq 0 &\Leftrightarrow 2h'(r)^2 \geq h(r)h''(r)\\
& \Leftrightarrow 2\left(\dfrac{h'(r)}{h(r)}\right)^2\geq \dfrac{h''(r)}{h(r)}\\
& \Leftrightarrow 2\left(\dfrac{h'(r)}{h(r)}\right)^2- \dfrac{h''(r)}{h(r)}\geq 0.
\end{split}
\end{equation}

By using (\ref{eq.theo2-2}) and that $-|\n_\Sigma r|^2\geq -1$ in (\ref{eq.theo2-1}), we have
\begin{equation}\label{3}
\begin{split}
\Delta_\Sigma\log h(r) &=2\left(\frac{h'(r)}{h(r)}\right)^2 - |\n_\Sigma r|^2\left(2\left(\frac{h'(r)}{h(r)}\right)^2 - \dfrac{h''(r)}{h(r)}\right)\\
&\geq \dfrac{h''(r)}{h(r)}=-\dfrac{1}{n-1}\ric_M(\n r).
\end{split}
\end{equation}

Notice that $h(0)=0$ implies $\log h(r)$ is not defined in $p\in\Sigma.$  Consider $\Sigma_t=\Sigma - B(p,t),$ where $B(p,t)$ is the extrinsic ball of center $p$ and radius $t.$ Thus, integrating the inequality (\ref{3}) above over $\Sigma_t,$ we have
\begin{equation}\label{eq.theo2-3}
\begin{split}
-\frac{1}{n-1} \int_{\Sigma_t} \ric_M(\n r)d\Sigma_t &\leq \int_{\Sigma_t} \Delta_\Sigma \log h(r) d\Sigma_t\\
& = - \int_{\partial \Sigma_t} \dfrac{h'(r)}{h(r)}\dfrac{\partial r}{\partial\nu} dS_{\Sigma_t} + \int_{\partial\Sigma} \dfrac{h'(r)}{h(r)}\dfrac{\partial r}{\partial\nu}dS_{\Sigma},
\end{split}
\end{equation}
where we are using the abuse of notation $\partial\Sigma_t = \Sigma\cap\partial B(p,t)$ and $\nu$ is the outward unit normal vector field of $\Sigma_t.$ The Taylor expansion of $h(t)$ near $0,$
\[
h(t) = h(0)+h'(0)t + R(t), \ \lim_{t\ria 0} \frac{R(t)}{t}=0,
\]
and the hypothesis $h(0)=0$ and $h'(0)=1$ gives
\begin{equation}\label{limit}
\lim_{t\ria 0} \dfrac{h(t)}{t}=1.
\end{equation}
Taking $t\ria0$ in (\ref{eq.theo2-3}), by using (\ref{limit}), $h'(0)=1,$ and $\displaystyle{\lim_{t\ria 0}\frac{\partial r}{\partial\nu}=1},$ we have
\[
\begin{split}
-\dfrac{1}{n-1}\int_\Sigma \ric_M(\n r)d\Sigma &\leq -\lim_{t\ria0} \int_{\partial \Sigma_t} \dfrac{h'(r)}{h(r)}\dfrac{\partial r}{\partial\nu}dS_{\Sigma_t} + \int_{\partial\Sigma} \dfrac{h'(r)}{h(r)}\dfrac{\partial r}{\partial\nu}dS_{\Sigma}\\
&=-\lim_{t\ria 0} h'(t)\frac{t}{h(t)}\dfrac{1}{t}\int_{\partial\Sigma_t} \dfrac{\partial r}{\partial\nu}dS_{\Sigma_t} + \int_{\partial\Sigma} \dfrac{h'(r)}{h(r)}\dfrac{\partial r}{\partial\nu}dS_{\Sigma}\\
&= -2\pi + \int_{\partial\Sigma} \dfrac{h'(r)}{h(r)}\dfrac{\partial r}{\partial\nu}dS_{\Sigma}.\\
\end{split}
\]
Since $h$ depends on the choice of $p\in\Sigma,$ $\displaystyle{\int_{\partial\Sigma} \dfrac{h'(r)}{h(r)}\dfrac{\partial r}{\partial\nu}}$ is a function of $p.$ Ranging $p$ over $\Sigma,$ integrating over $\Sigma,$ and by using Fubini's theorem, we have

\[
\begin{split}
-\dfrac{A}{n-1}\int_\Sigma \ric_M(\n r)d\Sigma &\leq -2\pi A + \int_\Sigma\int_{\partial\Sigma} \dfrac{h'(r)}{h(r)}\dfrac{\partial r}{\partial\nu}dS_{\Sigma}d\Sigma\\
                                & \leq -2\pi A + \int_{\partial\Sigma}\int_\Sigma\dfrac{h'(r)}{h(r)}d\Sigma dS_{\Sigma}\\
                                & \leq -2\pi A + \int_{\partial\Sigma}\int_\Sigma \Delta_\Sigma r d\Sigma dS_{\Sigma}\\
                                &=-2\pi A + \int_{\partial\Sigma}\int_{\partial\Sigma}\dfrac{\partial r}{\partial\nu} dS_{\Sigma}dS_{\Sigma}\\
                                &\leq -2\pi A + \int_{\partial\Sigma}\int_{\partial\Sigma} 1dS_{\Sigma}dS_{\Sigma} \\
                                &= -2\pi A + L^2,
\end{split}          
\]
where we used that $\dfrac{\partial r}{\partial\nu}\leq 1$ and that $\Delta_\Sigma r = \dfrac{h'(r)}{h(r)}(2-|\n_\Sigma r|^2)\geq \dfrac{h'(r)}{h(r)}.$ This proves the item (i) of Theorem \ref{Theo.2}.

In order to prove the item (ii), notice that, analogously to (\ref{eq.theo2-2})
\[
\dfrac{d}{dr}\left(r+\dfrac{h(r)}{h'(r)}\right)\leq 0 \Leftrightarrow 2\left(\dfrac{h'(r)}{h(r)}\right)^2- \dfrac{h''(r)}{h(r)}\leq 0.
\]
Thus the inequality (\ref{eq.theo2-1}) becomes
\begin{equation}\label{4}
\begin{split}
\Delta_\Sigma\log h(r) &=2\left(\frac{h'(r)}{h(r)}\right)^2 - |\n_\Sigma r|^2\left(2\left(\frac{h'(r)}{h(r)}\right)^2 - \dfrac{h''(r)}{h(r)}\right)\\
&\geq 2\left(\dfrac{h'(r)}{h(r)}\right)^2.
\end{split}
\end{equation}
On the other hand, using that $\scal_N\geq0,$ we have
\[
\begin{split}
\scal_{M} &=  \dfrac{\scal_N-(n-1)(n-2)h'(r)^2}{h^2} - 2(n-1)\frac{h''(r)}{h(r)}\\
&\geq -(n-1)(n-2)\left(\dfrac{h'(r)}{h(r)}\right)^2 + 2\ric_M(\n r),
\end{split}
\]
i.e.,
\[
\left(\frac{h'(r)}{h(r)}\right)^2 \geq -\frac{1}{(n-1)(n-2)}\left(\scal_M - 2\ric_M(\n r)\right).
\]
Replacing the inequality above in (\ref{4}), we obtain
\[
\Delta_\Sigma\log h(r) \geq -\frac{2}{(n-1)(n-2)}\left(\scal_M - 2\ric_M(\n r)\right).
\]
The rest of the proof of the item (ii) is analogous to the proof of the item (i). This concludes the proof of Theorem \ref{Theo.2}.
\end{proof}

\section{Appendix}

The metric $g=dr^2+h(r)^2g_N$ of a warped product $M=(0,b)\times \s^{n-1}$ such that $h(0)=h(b)=0$ is smooth at $0$ if and only if $h(0)=0,$ $h'(0)=1$ and all the even order derivatives are zero, i.e., $h^{(2m)}(0)=0, m>0.$ The metric is smooth at $b$ if and only if $h(b)=0,$ $h'(b)=-1,$ and all the even order derivatives are zero, i.e., $h^{(2m)}(b)=0, m>0,$ see \cite{Petersen}, Proposition 1 and Proposition 2, p. 13. Otherwise, the metric is singular at the respective extremal point. In particular, if $h(r)$ is an odd function of $r,$ then by the Taylor expansion of $h(r)$ near zero, we have that $h^{(2m)}(0)=h^{(2m)}(b)=0, m>0.$

Below we give three examples of smooth warped product manifolds which satisfy the conditions of Theorem \ref{Theo.2} and Theorem \ref{mono}.

\begin{example}\label{ex-theo.2-1}
{\normalfont Let $B\neq 0$ and $p>0$ be real numbers. Let $I=(0,\infty)$ for $B>0$ and $I=(0,(-B)^{-1/p})$ for $B<0.$ Define $h:I\ria\R$ by
\[
h(r)=r+Br^{p+1}.
\]
We introduce the warped product manifold $M^n(B)=I\times \s^{n-1},$ with the metric $g=dr^2+h(r)^2g_{\s^{n-1}}.$ Clearly, $h(0)=0$ and $h'(0)=1.$ 

Since $h'(r)=1+B(p+1)r^p,$ we have $h'(r)>0$ for $r\in(0,\infty),$ when $B>0,$ and for 
\[
r\in(0,((p+1)(-B))^{-1/p})\subset(0,(-B)^{-1/p}),
\]
when $B<0.$ Thus $h'(r)>0$ everywhere in $M^n(B)$ when $B>0,$ and for \[M^n_+(B)=(0,((p+1)(-B))^{-1/p})\times\s^{n-1}\] when $B<0.$ On the other hand,
\[
u(r)=r+\dfrac{h(r)}{h'(r)} = \dfrac{r(2+(p+2)Br^p)}{1+B(p+1)r^p}
\]
and
\[ 
u'(r)= \dfrac{B^2(p+1)(p+2)r^{2p}-B(p+1)(p-4)r^p+2}{(1+B(p+1)r^p)^2}.
\]
In order to analyse the sign of $u'(r)=\dfrac{d}{dr}\left(r+\dfrac{h(r)}{h'(r)}\right),$ notice that the expression $B^2(p+1)(p+2)r^{2p}-B(p+1)(p-4)r^p+2$ is a quadratic function of $t=r^p.$ The function 
\[
f(t)=B^2(p+1)(p+2)t^2-B(p+1)(p-4)t+2
\]
has the roots
\[
r_0=\dfrac{p-4-p\sqrt{\dfrac{p-7}{p+1}}}{2B(p+2)} \ \mbox{and} \ r_1=\dfrac{p-4+p\sqrt{\dfrac{p-7}{p+1}}}{2B(p+2)}.
\]
Thus we have $u'(r)>0$ everywhere in $M^n_+(B),$ for $B<0$ and, if $p<7,$ $u'(r)>0$ everywhere in $M^n(B), B>0.$ If $p>7$ and $B>0,$ then $u'(r)>0$ for $r\in(0,r_0)\cup(r_1,\infty).$

These metrics are smooth at $0$ for even $p,$ since in this case $h(r)$ is an odd function.
}
\end{example}

\begin{example}[Asymptotically cylindrical manifolds]\label{cylindrical}
{\normalfont
Let $M^n=(0,\infty)\times\s^{n-1}$ with the metric $g=dr^2+h(r)^2g_{\s^{n-1}},$ where $h:[0,\infty)\ria\R$ is given by
\[
h(r)=\dfrac{1}{K}\arctan(Kr).
\]
We have $h(0)=0, \ h'(0)=1,$
\[
u(r)=2+\frac{1}{K}(1+K^2r^2)\arctan(Kr), \ \mbox{and} \ u'(r)= 2+ 2Kr\arctan(Kr)>2>0.
\]
Notice that, since
\[
\dfrac{d}{dr}\left(\dfrac{h(r)}{h'(r)}\right)=1+2Kr\arctan(Kr),
\]
this manifold also satisfies the hypothesis of Theorem \ref{mono}. Since $h(r)$ is an odd function, the metric is smooth at $0.$
More generally, let $h:[0,\infty)\ria \R$ such that $h(0)=0, \ h'(0)=1,$ $h'(r)>0,$ $h''(r)<0$ for all $r\in[0,\infty),$
\[
\lim_{r\ria\infty}h(r)=K>0, \ \mbox{and}\ \lim_{r\ria\infty}h'(r)=\lim_{r\ria\infty}h''(r)=0.
\]
We have
\[
u'(r)=2-\frac{h(r)h''(r)}{h'(r)^2}>2>0.
\]
Since 
\[
\dfrac{d}{dr}\left(\dfrac{h(r)}{h'(r)}\right)= 1-\frac{h(r)h''(r)}{h'(r)^2}>1>0,
\]
these manifolds satisfies also the hypothesis of Theorem \ref{mono}.
We call these manifolds ``asymptotically cylindrical'' because the sectional curvatures satisfy
\[
\lim_{r\ria\infty} K_M(X,Y)=\lim_{r\ria\infty}\dfrac{1-h'(r)^2}{h(r)^2}=\dfrac{1}{K^2}
\]
and
\[
\lim_{r\ria\infty} K_M(X,\n r) = -\lim_{r\ria\infty}\dfrac{h''(r)}{h(r)}=0,
\]
for every $X,Y\in TM, \ X\perp\n r$ and $Y\perp \n r.$

Another example of asymptotically cylindrical manifold is given by $M^n=(0,\infty)\times\s^{n-1}$ with the metric $g=dr^2+h(r)^2g_{\s^{n-1}},$ where
\[
h(r)=\dfrac{r}{(1+ar^p)^{1/p}}, \ a>0, \ p>0.
\]
We have $h(0)=0, \ h'(0)=1,$  
\[h'(r)=\dfrac{1}{(1+ar^p)^{1+1/p}}>0,\ \mbox{and}\ h''(r)=-\dfrac{(p+1)ar^{p-1}}{(1+ar^p)^{2+1/p}}<0.
\] 
This implies that
\[
\lim_{r\ria\infty} h(r)= \dfrac{1}{a^{1/p}}, \ \mbox{and}\ \lim_{r\ria\infty} h'(r)=\lim_{r\ria\infty} h''(r)=0,
\]
and thus $M^n$ is asymptotically cylindrical. Since, for even $p,$ $h(r)$ is an odd function, the metric is also smooth at $0$ for even $p.$
}
\end{example}

\begin{example}\label{ex-log}
{\normalfont
Let $M^n=(0,\infty)\times\s^{n-1}$ with the metric $g=dr^2+h(r)^2g_{\s^{n-1}},$ where $h:(0,\infty)\rightarrow\R$ is given by
\[
h(r)= r\ln(ar^2+e), \ a>0,
\]
and $e$ is the  basis of the natural logarithm. We have that $h(0)=0, \ h'(0)=1,$ $h'(r)=\ln(ar^2+e)+\dfrac{2ar^2}{ar^2+e}>0,$ and
\[
\begin{split}
\dfrac{d}{dr}\left(\dfrac{h(r)}{h'(r)}\right) &= \dfrac{a^2r^4(\ln^2(ar^2+e)+2\ln(ar^2+e)+4) +e^2\ln^2(ar^2+e)}{((ar^2+e)\ln(ar^2+e)+2ar^2)^2}\\
&\qquad+\dfrac{2aer^2\ln(ar^2+e)(\ln(ar^2+e)-1)}{((ar^2+e)\ln(ar^2+e)+2ar^2)^2}>0.
\end{split}
\]
Thus $M^n$ satisfies the hypothesis of Theorem \ref{mono} and the item (i) of Theorem \ref{Theo.2}. Moreover, since $h(r)$ is a odd function, the metric $g$ is smooth at $0.$
}
\end{example}

\begin{remark}
{\normalfont
We can construct many more examples by considering $h(r)=rf(r),$ where $f(r)$ is a positive function which satisfies $f(0)=1.$ In this case $h(0)=0$ and $h'(0)=1.$ If we choose an even function $f(r),$ the metric is also smooth at $0.$ Since $h'(r)=f(r)+rf'(r),$ if we consider $f'(r)\geq0,$ then we have trivially $h'(r)>1>0.$ Notice also that, conversely, by using Taylor expansion of $h(r)$ near $0$, the conditions $h(0)=0$ and $h'(0)=1$ imply the existence of a function $f(r)$ such that $h(r)=rf(r)$ in the interval of convergence of the Taylor expansion.
}
\end{remark}

\begin{bibdiv}
\begin{biblist}


\bib{Bessa}{article}{
   author={Bessa, G. P.},
   author={Garc{\'{\i}}a-Mart{\'{\i}}nez, S. C.},
   author={Mari, L.},
   author={Ramirez-Ospina, H. F.},
   title={Eigenvalue estimates for submanifolds of warped product spaces},
   journal={Math. Proc. Cambridge Philos. Soc.},
   volume={156},
   date={2014},
   number={1},
   pages={25--42},
   issn={0305-0041},
   review={\MR{3144209}},
   doi={10.1017/S0305004113000443},
}

\bib{B-ON}{article}{
   author={Bishop, R. L.},
   author={O'Neill, B.},
   title={Manifolds of negative curvature},
   journal={Trans. Amer. Math. Soc.},
   volume={145},
   date={1969},
   pages={1--49},
   issn={0002-9947},
   review={\MR{0251664 (40 \#4891)}},
}

\bib{Morgan}{article}{
   author={Bray, Hubert},
   author={Morgan, Frank},
   title={An isoperimetric comparison theorem for Schwarzschild space and
   other manifolds},
   journal={Proc. Amer. Math. Soc.},
   volume={130},
   date={2002},
   number={5},
   pages={1467--1472},
   issn={0002-9939},
   review={\MR{1879971 (2002i:53073)}},
   doi={10.1090/S0002-9939-01-06186-X},
}

\bib{Brendle}{article}{
   author={Brendle, Simon},
   title={Constant mean curvature surfaces in warped product manifolds},
   journal={Publ. Math. Inst. Hautes \'Etudes Sci.},
   volume={117},
   date={2013},
   pages={247--269},
   issn={0073-8301},
   review={\MR{3090261}},
   doi={10.1007/s10240-012-0047-5},
}

\bib{Brendle-2}{article}{
   author={Brendle, Simon},
   author={Hung, Pei-Ken},
   author={Wang, Mu-Tao},
   title={A Minkowski Inequality for Hypersurfaces in the Anti-de Sitter-Schwarzschild Manifold},
   journal={Comm. Pure Appl. Math.},
   volume={69},
   number={1},
   date={2016},
   pages={124--144},
   issn={1097-0312},
   doi={10.1002/cpa.21556},
}

\bib{Brendle-Eichmair}{article}{
   author={Brendle, Simon},
   author={Eichmair, Michael},
   title={Isoperimetric and Weingarten surfaces in the Schwarzschild
   manifold},
   journal={J. Differential Geom.},
   volume={94},
   date={2013},
   number={3},
   pages={387--407},
   issn={0022-040X},
   review={\MR{3080487}},
}


\bib{CG-Manusc}{article}{
   author={Choe, Jaigyoung},
   author={Gulliver, Robert},
   title={Isoperimetric inequalities on minimal submanifolds of space forms},
   journal={Manuscripta Math.},
   volume={77},
   date={1992},
   number={2-3},
   pages={169--189},
   issn={0025-2611},
   review={\MR{1188579 (93k:53059)}},
   doi={10.1007/BF02567052},
}



\bib{Gimeno}{article}{
   author={Gimeno, Vicent},
   title={Isoperimetric inequalities for submanifolds. Jellett-Minkowski's
   formula revisited},
   journal={Proc. Lond. Math. Soc. (3)},
   volume={110},
   date={2015},
   number={3},
   pages={593--614},
   issn={0024-6115},
   review={\MR{3342099}},
   doi={10.1112/plms/pdu053},
}


\bib{heintze}{article}{
   author={Heintze, Ernst},
   title={Extrinsic upper bounds for $\lambda_1$},
   journal={Math. Ann.},
   volume={280},
   date={1988},
   number={3},
   pages={389--402},
   issn={0025-5831},
   review={\MR{936318}},
   doi={10.1007/BF01456332},
}

\bib{hsiung}{article}{
   author={Hsiung, Chuan-Chih},
   title={Some integral formulas for closed hypersurfaces},
   journal={Math. Scand.},
   volume={2},
   date={1954},
   pages={286--294},
   issn={0025-5521},
   review={\MR{0068236}},
}

\bib{LT}{article}{
   author={Lu, Jin},
   author={Tanaka, Minoru},
   title={On the compact minimal submanifold in Riemannian manifolds},
   journal={Proc. Sch. Sci. Tokai Univ.},
   volume={35},
   date={2000},
   pages={33--40},
   issn={0919-5025},
   review={\MR{1761525}},
}

\bib{Montiel}{article}{
   author={Montiel, Sebasti{\'a}n},
   title={Unicity of constant mean curvature hypersurfaces in some
   Riemannian manifolds},
   journal={Indiana Univ. Math. J.},
   volume={48},
   date={1999},
   number={2},
   pages={711--748},
   issn={0022-2518},
   review={\MR{1722814 (2001f:53131)}},
   doi={10.1512/iumj.1999.48.1562},
}

\bib{myers}{article}{
   author={Myers, S. B.},
   title={Curvature of closed hypersurfaces and non-existence of closed
   minimal hypersurfaces},
   journal={Trans. Amer. Math. Soc.},
   volume={71},
   date={1951},
   pages={211--217},
   issn={0002-9947},
   review={\MR{0044884}},
}

\bib{Petersen}{book}{
   author={Petersen, Peter},
   title={Riemannian geometry},
   series={Graduate Texts in Mathematics},
   volume={171},
   publisher={Springer-Verlag, New York},
   date={1998},
   pages={xvi+432},
   isbn={0-387-98212-4},
   review={\MR{1480173}},
   doi={10.1007/978-1-4757-6434-5},
}

\bib{reilly}{article}{
   author={Reilly, Robert C.},
   title={On the first eigenvalue of the Laplacian for compact submanifolds
   of Euclidean space},
   journal={Comment. Math. Helv.},
   volume={52},
   date={1977},
   number={4},
   pages={525--533},
   issn={0010-2571},
   review={\MR{0482597}},
}


\bib{Seo}{article}{
   author={Seo, Keomkyo},
   title={Isoperimetric inequalities for submanifolds with bounded mean
   curvature},
   journal={Monatsh. Math.},
   volume={166},
   date={2012},
   number={3-4},
   pages={525--542},
   issn={0026-9255},
   review={\MR{2925153}},
   doi={10.1007/s00605-011-0332-2},
}

\bib{shiorama}{article}{
   author={Shiohama, Katsuhiro},
   title={Minimal immersions of compact Riemannian manifolds in complete and
   non-compact Riemannian manifolds},
   journal={K\=odai Math. Sem. Rep.},
   volume={22},
   date={1970},
   pages={77--81},
   issn={0023-2599},
   review={\MR{0266112}},
}



\bib{Xia-Wu-1}{article}{
   author={Wu, Jie},
   author={Xia, Chao},
   title={On rigidity of hypersurfaces with constant curvature functions in
   warped product manifolds},
   journal={Ann. Global Anal. Geom.},
   volume={46},
   date={2014},
   number={1},
   pages={1--22},
   issn={0232-704X},
   review={\MR{3205799}},
   doi={10.1007/s10455-013-9405-x},
}

\bib{Xia-Wu-2}{article}{
   author={Wu, Jie},
   author={Xia, Chao},
   title={Hypersurfaces with constant curvature quotients in warped product
   manifolds},
   journal={Pacific J. Math.},
   volume={274},
   date={2015},
   number={2},
   pages={355--371},
   issn={0030-8730},
   review={\MR{3332908}},
   doi={10.2140/pjm.2015.274.355},
}

\bib{Yau}{article}{
   author={Yau, Shing-Tung},
   title={Isoperimetric constants and the first eigenvalue of a compact manifold},
   journal={Ann. Sci. Ecole Norm. Sup.},
   volume={8},
   date={1975},
   pages={487--507},
}


\end{biblist}
\end{bibdiv}

\end{document}